\DeclareMathAlphabet{\pazocal}{OMS}{zplm}{m}{n}
\newtheorem{theorem}{Theorem}[section]
\newtheorem{lemma}[theorem]{Lemma}
\theoremstyle{definition}
\newtheorem{proposition}[theorem]{Proposition}
\newtheorem{corollary}[theorem]{Corollary}
\theoremstyle{remark}
\numberwithin{equation}{section}
\begin{document}

	\title[No smooth phase transition for the Nodal Length of band-limited
	spherical random fields]{No smooth phase transition for the Nodal Length \\of band-limited
		spherical random fields}

	\author{Anna Paola Todino\textsuperscript{}}
	\email{annapaola.todino@uniroma1.it}
	\address{\textsuperscript{}Department of Statistics, Sapienza University of Rome\\}

	\date{}
	
	\subjclass[2020]{60G60, 42C10, 33C55,  62M15, 35P20}

	\keywords{Gaussian Eigenfunctions, Spherical Harmonics, Berry's cancellation, band-limited functions}

	\begin{abstract}
	In this paper, we investigate the variance of the nodal length for
	band-limited spherical random waves. When the frequency window includes a number of
	eigenfunctions that grows linearly,  the variance of the nodal length is
	linear with respect to the frequency, while it is logarithmic when a
	single eigenfunction is considered. Then, it is natural to conjecture
	that there exists a smooth transition with respect to the number of
	eigenfunctions in the frequency window; however, we show here that the
	asymptotic variance is logarithmic whenever this number grows
	sublinearly, so that the window "shrinks". The result is achieved by exploiting the
	Christoffel-Darboux formula to establish the covariance function of the
	field and its first and second derivatives. This allows us to compute
	the two-point correlation function at high frequency and then to derive
	the asymptotic behaviour of the variance.
	
	\end{abstract}
\maketitle
	
\newcommand{\ABS}[1]{\left(#1\right)} 
\newcommand{\veps}{\varepsilon} 

\section{Introduction}
\subsection{Laplacian eigenfunctions and Berry's  random wave model}

Let $(\mathcal{M},g)$ be a compact Riemannian manifold and $\Delta_g$ be the Laplace-Beltrami operator on $\mathcal{M}$. We denote the corresponding eigenvalues $\{\lambda_j\}_{j\geq0}$ with associate orthonormal basis $L^2(\mathcal{M})$ consisting of eigenfunctions $\{f_{\lambda_j}\}_{j\geq0}$, i.e.  
\begin{equation}\label{laplacian}
 \Delta_g f_{\lambda_j}+ \lambda_j f_{\lambda_j}=0.
 \end{equation}

The nodal set of  $f_\lambda$ is its zero set

$$f_\lambda^{-1}(0):=\{x \in M: f_\lambda (x)=0\}.$$

It is known that $f_\lambda^{-1}$ is the union of smooth curves outside a finite set of points \cite{Cheng}. Yau conjectured that its volume (the nodal length in dimension 2) satisfies 

\begin{equation}
\label{Yau}
c\sqrt{\lambda} \leq Vol(f_{\lambda}^{-1}(0))\leq C\sqrt{\lambda}
\end{equation}
for some constants $c,C>0$ which depend on $\mathcal{M}$ only. Donnelly and Fefferman \cite{DF} showed that Yau's conjecture holds for any real-analytic
manifold (of any dimension), and recently, Logunov and Malinnikova \cite{Log18,Log,LM1} proved
the lower-bound in the smooth case and gave a polynomial upper-bound. 
The studies of Laplacian eigenfunctions have been recently intensified in view of the Berry's conjecture \cite{Ber77}. Indeed, according to
Berry, the behaviour of high energy ($\lambda \to \infty$) \emph{deterministic} eigenfunctions should be universal, at least on generic chaotic surfaces, meaning that one can compare such an eigenfunction $f_{\lambda}$ of large eigenvalue $\lambda$ with the random
monochromatic plane wave of wavelength $\sqrt{\lambda}$, that is, the (unique in law) centered Gaussian random field on $\mathbb R^2$ whose covariance kernel is 
\begin{equation}\label{berrymodel}
J_0(\sqrt{\lambda} \| x - y \|), \qquad x,y\in \mathbb R^2,
\end{equation}
$J_0$ being the Bessel function of the first kind of order zero.
Indeed Berry's Random Wave model (RWM) is the centred Gaussian random field $u: \mathbb{R}^2 \to \mathbb{R}$ of monochromatic isotropic waves, described by the covariance function 
$$r(x,y)=r(x-y):=\mathbb{E}[u(x)\cdot u(y)]=J_0(|x-y|).$$

Berry's RWM has recently been investigated by many authors, we refer to \cite{Vidotto}, \cite{NPV}, \cite{DNPR}, 	\cite{NPR} for some recent studies on nodal lengths and	nodal sets.
	
\subsection{Random Spherical harmonics} Among models of the form (\ref{laplacian}) a lot of interest has been aroused by the case of random spherical eigenfunctions. Motivations are mainly two: on the one hand Berry's conjecture, indeed random spherical eigenfunctions exhibit the same covariance structure as
the Euclidean case, in the scaling limit, and on the other hand its application to the Cosmic Microwave Background \cite{M e Peccati}. We describe the model below.

Let us consider  $\mathcal{M}=\mathbb{S}^2$ and then the Helmholtz equation
\begin{equation}\label{Helmotz}
\Delta_{\mathbb{S}^2} T+\lambda_\ell T=0,
\end{equation}

where $\Delta_{\mathbb{S}^2} $ is the spherical Laplacian and $\lambda_\ell =\ell(\ell+1)$ for $\ell \in \mathbb{N}$. Spherical eigenfunctions are defined as the solutions of (\ref{Helmotz}).
For any eigenvalue $-\lambda_\ell$ we can choose an arbitrary $L^2-$orthonormal basis $\{Y_{\ell m}(\cdot)\}_{m=-\ell,\dots,\ell}$ and consider random eigenfunctions of the form 
\begin{equation} \label{ttt}T_\ell(x)=\sum_{m=-\ell}^\ell a_{\ell m} Y_{\ell m}(x), 
\end{equation}

where  $\{a_{\ell m}\}$ is the array of random spherical harmonic coefficients, which are independent, safe for the condition $\bar{a}_{\ell m} = (-1)^m a_{\ell,-m};$ for $m \ne0$ they are standard complex-valued Gaussian variables, while $a_{\ell 0}$ is a standard real-valued Gaussian variable. They satisfy
\begin{equation*}
\mathbb{E}[a_{\ell m}\bar{a}_{\ell ^{\prime }m^{\prime }}]=\delta _{\ell
}^{\ell ^{\prime }}\delta _{m}^{m^{\prime }}.
\end{equation*}

The random fields $\{T_\ell(x):  x\in \mathbb{S}^2 \}$ are isotropic centred Gaussian with covariance function given by
$$\mathbb{E}[T_\ell(x)T_\ell(y)]= \frac{2\ell+1}{4\pi}P_\ell(\cos d(x,y)) \quad \mbox{ and then } \quad {\rm Var}(T_{\ell}(x))=\frac{2\ell+1}{4\pi},$$
denoting with $P_\ell$ the Legendre polynomial and $d(x,y)=\arccos \langle x,y\rangle$ the geodesic distance on the sphere, see \cite{M e Peccati} for more details.
The nodal set of $T_\ell$ is $$T_\ell^{-1}(0)=\{ x \in \mathbb{S}^2: T_\ell(x)=0 \}.$$
The analysis of its volume, i.e. the nodal length, and other properties in the high energy domain have been largely investigated for example in \cite{NS09}, \cite{NS17}, \cite{W09}, \cite{W}, \cite{MRW2017}.

It is worth stressing that the \emph{scaling limit} of random spherical harmonics is Berry's RWM (\ref{berrymodel}). Indeed, Hilb's asymptotic formula, see \cite{szego} 
Equation 8.21.7 and \cite{W}, ensures that, for every $\epsilon >0$, uniformly for $\theta\in [0,\pi-\epsilon]$,
\begin{equation}\label{hilb}
P_\ell(\cos \theta) \sim \sqrt{\frac{\theta}{\sin \theta}} J_0(\ell\theta), \qquad \ell \to +\infty.
\end{equation}
When valid, Berry's ansatz implies Yau's conjecture in (\ref{Yau}) (see \cite{Yau82}).

Berry's conjecture goes far beyond the macroscopic setting, i.e. the random wave model (\ref{berrymodel}) is applicable to \emph{shrinking domains}. For example, as it has been shown in \cite{Todino2}, it should be a good model for $f_{\lambda}^{-1}(0) \cap B_{r_\lambda}$, in particular for the nodal length
 lying inside a shrinking geodesic ball $B_{r_\lambda}$ of radius slightly above the Planck scale: $r_\lambda \approx \frac{C}{\sqrt{\lambda}}$ with $C\gg 0$ sufficiently big. 
For a recent survey on nodal structures of random fields see \cite{Wigman}.

\subsection{Band-limited functions.}
Another important and interesting model is given by
the so-called \emph{band-limited functions} on a smooth Riemannian manifold $\mathcal{M}$.
	Indeed, while the round sphere has spectral degeneracies, in the case of a generic manifold each eigenspace has dimension one, so that we cannot introduce a Gaussian ensemble on the eigenspace. For a generic manifold $\mathcal{M}$, let $\lambda_j$ be the eigenvalues and $f_j$ the corresponding eigenfunction. In this case, rather than considering random eigenfunctions, one considers random combinations of eigenfunctions with growing energy window of either type 

$$f^L(x)= \sum_{\lambda_j \in [0,\lambda]}a_j f_j(x) $$
(called long range dependent)
$$f^S(x)= \sum_{\lambda_j \in [\sqrt{\lambda}, \sqrt{\lambda}+1]}a_jf_j(x) $$
(called short range dependent) as $\lambda \to \infty$. In \cite{berard} and \cite{Zel} the authors found the expected value for the nodal length in the long range dependent while in \cite{Zel} the expected value in the short range. See also \cite{TotW} for the study of the number of boundary intersections for random combinations of eigenfunction $f^L(x)$ and $f^S(x)$ on generic billiards, \cite{SW19} for topologies of nodal sets
of random band-limited functions, \cite{BW} for volume distribution of their nodal domains 
	and \cite{KSW21} for nodal volume for non-Gaussian random band-limited functions.

In this paper we are interested in 
the nodal length for band-limited functions on the sphere.
We present in the next section the model and the main result of the paper.

\section{Main Results}
\subsection{Nodal length for band-limited functions on the sphere.}

In this paper we consider the following random field, 
 \begin{equation}\label{1}
\bar{T}_{\alpha_n}(x)= C_{\alpha_n}\sum_{\ell=\alpha_n n}^{n}T_\ell(x),
\end{equation}
where $T_\ell$ are defined in (\ref{ttt}), 
$\alpha_{n}$ is a sequence such that $\alpha_n=1-g(n)$ with $g(n)=o(1)$ as $n\to \infty$ and such that $ng(n) \to \infty$. The coefficient $C_{\alpha_{n}}$ is chosen such that
$$Var(\bar{T}_{\alpha_n}(x))=C_{\alpha_{n}}^2 \sum_{\ell=\alpha_n n}^{n}Var(T_\ell(x))=C_{\alpha_{n}}^2 \sum_{\ell=\alpha_n n}^{n}\frac{2\ell+1}{4\pi}=1.$$ 
Hence we take
\begin{eqnarray*}
C_{\alpha_n }^2&=& \frac{4\pi }{n^2(1-\alpha_{n}^2)+2n +1} = \frac{4\pi }{n^2(1-(1-g(n))^2)+2n +1}= \frac{4\pi }{n^2(1-1+2g(n)-g(n)^2)+2n +1}\\
&=&  \frac{4\pi }{2g(n)n^2-g(n)^2n^2+2n +1}.
\end{eqnarray*}
The covariance function of $\bar{T}_{\alpha_{n}}(x)$ is given by
\begin{equation}\label{cov}
\Gamma_{\alpha_n}(x,y):=\mathbb{E}[\bar{T}_{\alpha_n}(x)\bar{T}_{\alpha_n}(y)]= C_{\alpha_n }^2 \sum_{\ell=\alpha_n n}^n \frac{2\ell+1}{4\pi}P_{\ell}(x,y)
\end{equation}

and the nodal length of the random field $\bar{T}_{\alpha_{n}}$ is, by definition,
$$\mathcal{L}_{\alpha_{n}}:=len(\bar{T}_{\alpha_{n}}^{-1}(0)) = len\{ x \in \mathbb{S}^2: \bar{T}_{\alpha_{n}}(x)=0 \}.$$

Our main result is the following.

\begin{theorem}\label{mainth}
As $n\to \infty$, we have that
	\begin{equation}
	Var[\mathcal{L}_{\alpha_n}]=\frac{1}{32}\log n +O(1)+O(g(n)\log n).
	\end{equation}
\end{theorem}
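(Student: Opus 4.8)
The plan is to run the Kac--Rice method for the second moment of the nodal length and then to extract the logarithmic term through Berry's cancellation, the genuinely new point being to track the dependence on the shrinking parameter $g(n)$ uniformly.

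First, write $F=\bar T_{\alpha_n}$, a unit-variance isotropic Gaussian field on $\mathbb S^2$. By the Kac--Rice formula the expected nodal length is $\mathbb{E}[\mathcal L_{\alpha_n}]=4\pi K_1$, where $K_1=p_{F(x)}(0)\,\mathbb{E}[\|\nabla F(x)\|\mid F(x)=0]$ is the (constant, by isotropy) first zero-intensity, and the second moment is
$$\mathbb{E}[\mathcal L_{\alpha_n}^2]=\int_{\mathbb S^2}\int_{\mathbb S^2}K_2(x,y)\,dx\,dy,\qquad K_2(x,y)=p_{(F(x),F(y))}(0,0)\,\mathbb{E}\big[\|\nabla F(x)\|\,\|\nabla F(y)\|\,\big|\,F(x)=F(y)=0\big].$$
Since $K_2(x,y)$ depends only on $\phi=d(x,y)$, the variance reduces to the one-dimensional integral
$$\var[\mathcal L_{\alpha_n}]=8\pi^2\int_0^\pi\big(K_2(\phi)-K_1^2\big)\sin\phi\,d\phi,$$
so the whole problem is to understand $K_2(\phi)$, which is a function of the covariance matrix of the Gaussian vector $(F(x),F(y),\nabla F(x),\nabla F(y))$.

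Second, I would compute the entries of this covariance matrix, namely $\Gamma_{\alpha_n}$ and its $\phi$-derivatives up to second order, in closed form. The weighted sum in \eqref{cov} is handled by the Christoffel--Darboux machinery together with the derivative identity $(2\ell+1)P_\ell=P_{\ell+1}'-P_{\ell-1}'$, which telescopes it into boundary terms at the two ends of the window, $P_{n}',P_{n+1}',P_{\alpha_n n}',P_{\alpha_n n-1}'$ (and into higher derivatives of the same Legendre polynomials for the derivatives of $\Gamma_{\alpha_n}$). Inserting Hilb's asymptotic \eqref{hilb} and the Bessel recurrence $\frac{d}{dz}(zJ_1(z))=zJ_0(z)$, and using that $C_{\alpha_n}^2\sim 2\pi/(n^2 g(n))$ together with $\alpha_n=1-g(n)$, a short computation collapses the boundary difference and yields, after the scaling $\phi=\psi/n$,
$$\Gamma_{\alpha_n}(\psi/n)=J_0(\psi)\,\big(1+O(g(n))\big)+(\text{lower order}),$$
with the analogous statements for the derivatives. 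In other words, to leading order the covariance structure is exactly that of Berry's $J_0$-kernel at effective frequency $n$, the same as for the single eigenfunction $T_n$; in particular $\mathbb{E}[\|\nabla F\|^2]\sim n^2$.

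Third, I would substitute these asymptotics into $K_2(\phi)$ and expand. Diagonalising the conditional gradient law and expanding $\mathbb{E}[\|\nabla F(x)\|\,\|\nabla F(y)\|\mid F(x)=F(y)=0]$ in powers of the (small, off-diagonal) covariances expresses $K_2(\phi)-K_1^2$ as a polynomial in $\Gamma_{\alpha_n}$ and its derivatives. The candidate leading term of this expansion is of size $n^2$ and would force a linear variance; the decisive point---and the main obstacle---is \emph{Berry's cancellation}: this $n^2$-term integrates against $\sin\phi\,d\phi$ to a quantity that exactly cancels, so that the true leading contribution comes from the next order. There the integrand behaves, in the intermediate range $1/n\ll\phi\ll1$, like $c\,\phi^{-2}$ times bounded oscillatory Bessel factors, and $\int_{1/n}^{1}\phi^{-2}\sin\phi\,d\phi\sim\int_{1/n}^{1}\phi^{-1}\,d\phi\sim\log n$ produces the logarithm, with the constant $\tfrac1{32}$ coming through the conditional-moment expansion exactly as in Wigman's single-eigenfunction computation. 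The work is to verify that this cancellation persists, and that the constant is unchanged, uniformly as the window shrinks.

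Finally, I would control the remaining pieces. The $O(g(n))$ relative corrections to the kernel found in the second step propagate through the same intermediate integral and contribute $O(g(n)\log n)$; the endpoint regimes $\phi\lesssim 1/n$ (where the Gaussian vector degenerates and the integrand is bounded) and $\phi\gtrsim 1$ up to the antipode (where $\Gamma_{\alpha_n}$ and its derivatives are small and the two points decorrelate) each contribute only $O(1)$. Summing the three contributions gives
$$\var[\mathcal L_{\alpha_n}]=\tfrac1{32}\log n+O(1)+O(g(n)\log n),$$
as claimed.
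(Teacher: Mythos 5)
Your overall architecture is the same as the paper's: Kac--Rice reduction to the two-point correlation function, telescoping the spectral sum into boundary terms (your identity $(2\ell+1)P_\ell=P_{\ell+1}'-P_{\ell-1}'$ gives $\sum_{\ell=a}^{n}(2\ell+1)P_\ell=P_{n+1}'+P_n'-P_a'-P_{a-1}'=(n+1)P_n^{(1,0)}-aP_{a-1}^{(1,0)}$, which is exactly the Christoffel--Darboux reduction of formula (\ref{CDform})), asymptotics of the two boundary terms, and Berry's cancellation in the intermediate range. So the route is not genuinely different.

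There is, however, a real gap at the heart of your second step, and it sits precisely where the new difficulty of this problem lives. The claim that $\Gamma_{\alpha_n}(\psi/n)=J_0(\psi)\,(1+O(g(n)))$ plus lower-order terms, uniformly for $C<\psi\lesssim n$, is false. After telescoping, $\Gamma_{\alpha_n}$ is a \emph{difference} of two oscillations with frequencies $n+1$ and $n\alpha_n=n(1-g(n))$; in the scaled variable the two phases differ by $h\psi$ with $h=g(n)+O(g(n)^2)+O(1/n)$ (the quantity (\ref{h})), and since $ng(n)\to\infty$ this phase discrepancy is \emph{not} uniformly small: for $\psi\gtrsim 1/g(n)$ the deviation of $\Gamma_{\alpha_n}$ from $J_0(\psi)$ is of the same order as $J_0(\psi)$ itself (compare a zero of $J_0$ with a non-zero of the shifted phase), not a relative $O(g(n))$ correction, and the range $1/g(n)\lesssim\psi\lesssim n$ contributes a non-negligible share of $\int d\psi/\psi=\log n$. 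Consequently your final bookkeeping --- ``the $O(g(n))$ relative corrections propagate to $O(g(n)\log n)$'' --- is not justified as stated. The theorem survives because only the envelope $\psi^{-1/2}$ and the \emph{averaged} (non-oscillatory) parts of $\Gamma^2$, $(\Gamma')^2$, $\Gamma''\,\Gamma$, etc.\ enter the logarithmic term, and these are insensitive to the frequency shift; but to see this one must carry the exact phases $(n+1)\theta$ and $n\alpha_n\theta$ through the whole computation and control the resulting beat factors (the $\sin(h\psi)/(h\psi)$ and $g(n)/(h\psi)$ terms of Lemma \ref{Lemma1} and its corollaries), which is exactly what Lemma \ref{asympK} does and what your plan elides. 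Your endpoint estimates ($\psi\lesssim 1$ and the decorrelation near the antipode via symmetry $K(\psi)=K(\pi-\psi)$) and the identification of the constant $1/32$ are otherwise consistent with the paper.
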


\subsection{No phase transition and Berry's cancellation} 

Let us compare Theorem \ref{mainth} with the results obtained for one eigenfunction $T_\ell$ (which correspond to the case $\alpha_{n}\equiv 1$ in (\ref{1})).
The nodal length of $T_\ell$ has already been investigated in \cite{W, MRW2017}. More precisely, denoting $$\mathcal{L}_\ell:= len \{ x \in \mathbb{S}^2: T_\ell(x)=0 \},$$ it has been proved in \cite{W} that 
\begin{equation}\label{length1}
Var(\mathcal{L}_\ell)= \frac{\log \ell}{32}+O(1),
\end{equation}
as $\ell \to \infty$.
This result shows that the asymptotic variance is logarithmic and hence of order lower than $\ell$ which would be expected by the natural scaling considerations of the problem, as it happens for nonzero levels (see for example \cite{rossiphd}). 
Indeed, originally it was conjectured that
\begin{equation}\label{boundary}
Var(len(T_\ell^{-1}(z)))=c_1 \cdot \ell+ c_2 \cdot \log\ell+O(1)
\end{equation}
for any level $z \in \mathbb{R}$ with $c_1, c_2 \geq 0$, for $z \in \mathbb{R}$. 
However, it turned out that, when $z=0$, $c_1$ vanishes and one obtains (\ref{length1}), as predicted by Berry \cite{berry1} for the RWM. 
This phenomenon is called Berry's cancellation
phenomenon, it is due to the fact that the leading nonconstant term in the long range asymptotics of the 2-point correlation function (see \cite{W} and Section \ref{sec2point}) is purely oscillating,
so that it does not contribute to the variance and the non-oscillating leading terms cancel. 
This phenomenon seems to be of general nature. It also occurs in the torus \cite{KMW} and it is believed to hold for random combinations of eigenfunctions on a generic manifold. As we can see from Theorem \ref{mainth} it also holds in our context. 
However, when we consider an averaged of eigenfunctions on a
number of eigenfunctions that grows linearly, it results that the variance is linear
with respect to the frequency. In view of this, it is natural to conjecture that
there exists a smooth transition with respect to the number of
eigenfunctions that are averaged. This is the reason why we investigated the model (\ref{1}). 
 Althought we expected that the behaviour of the variance here could vary depending on the choice 
 of $\alpha_{n}$, Theorem \ref{mainth} shows that the asymptotic variance is still logarithmic whenever the frequency window includes a number of eigenfunctions that grows sublinearly, so that no smooth phase transition with respect to the sequence $\alpha_{n}$ arises.

\subsection{Interpretation in terms of Wiener-It\^o chaoses}\label{Chaoses}

The result in Theorem \ref{mainth} can be interpreted 
in terms of the $L^2(\Omega)$ expansion of the nodal length into Wiener-It\^o chaoses (see also the discussion in \cite{Wigman}),
 which are orthogonal spaces spanned by Hermite polynomials. First of all, we recall that the Hermite polynomials $H_q(x)$ are defined by $H_0(x)=1$, and for $q=2,3,\dots$ 
\begin{equation*}
H_q(x)=(-1)^q \frac{1}{\phi(x)} \frac{d^q\phi(x)}{dx^q},
\end{equation*}
with $ \phi(x)=\frac{1}{\sqrt{2\pi}}e^{-x^2/2}$. 

We consider the Wiener-It\^o chaos expansion 
\begin{align}\label{Proj}
\mathcal{L}_{\alpha_{n}}=\sum_{q=0}^{\infty} \mathcal{L}_{\alpha_{n}}[q],
\end{align}
where $\mathcal{L}_{\alpha_{n}}[q]$ denotes the projection of $\mathcal{L}_{\alpha_{n}}$ on the $q$-order chaos component that is the space generated by the $L^2$-completion of linear combinations of the form 
$$H_{q_1}(\xi_1) \cdot H_{q_2}(\xi_2) \cdots H_{q_k}(\xi_k), \hspace{1cm} k \ge 1,$$ 
with $q_i \in \mathbb{N}$ such that $q_1+\cdots+q_k=q$, and $(\xi_1, \dots, \xi_k)$ standard real Gaussian vector. 

The result in Theorem \ref{mainth} can be compared to those obtained for the full sphere $\mathbb S^2$ in  \cite{W, MRW2017} for the nodal length of the spherical harmonics $T_\ell(x)$. In that case, exploiting the Wiener-It\^o chaos expansion for these nonlinear functionals of Gaussian fields, 
it results that (after centering) a single term dominates the $L^2(\Omega)$ expansion, that is, the fourth chaotic component. More precisely, defining the random variable 
\begin{equation*}
h_{\ell,4}:= \int_{\mathbb{S}^2} H_4(f_\ell(x))\,dx,
\end{equation*}
called sample trispectrum, see i.e.  \cite{MRW2017, MW2014}, we have that 
$$\text{Var}(h_{\ell,4})=\frac{\log \ell}{32}+O(1).$$

Our result in Theorem \ref{mainth} suggests that the same happens for the nodal length $\bar{T}_{\alpha_{n}}$. Namely that the the random variable
$$h_{\alpha_{n}n;4}:=\int_{\mathbb{S}^2} H_4(\bar{T}_{\alpha_{n}}(x)) \,dx$$

dominates the Wiener-It\^o chaos expansion in (\ref{Proj}).\\
Note that in the case of $T_\ell$ the second chaotic projection is exactly zero while for $\bar{T}_{\alpha_{n}}$ it does not vanish. However it is not the leading term of the Wiener-Ito chaos expansion.

As a further result, in this paper, we give a direct computation of the second chaotic component of $\mathcal{L}_{\alpha_{n}}$ which can be read in the following proposition.

\begin{proposition}\label{2chaos} 
	As $n \to \infty$, we have that
	\begin{equation} 
	\begin{split}
	Var(\mathcal{L}_{\alpha_n}[2])&
	=\frac{2\pi^2}{3} g(n) \bigg[1+O\left(g(n)\right)+O\left( \frac{1}{n g(n)}  \right)\bigg].
	\end{split}
	\end{equation}
\end{proposition}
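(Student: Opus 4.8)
The plan is to make the second chaotic projection explicit and then reduce its variance to an elementary computation involving the random $L^2$-masses $I_\ell:=\int_{\mathbb{S}^2}T_\ell^2\,dx$. Starting from the Kac--Rice representation $\mathcal{L}_{\alpha_n}=\int_{\mathbb{S}^2}\delta_0(\bar{T}_{\alpha_n}(x))\,\|\nabla\bar{T}_{\alpha_n}(x)\|\,dx$, I would expand both factors into Hermite polynomials: $\delta_0$ in the field value, and the Euclidean norm $\|\cdot\|$ in the two normalized partial derivatives $\widetilde{\partial}_i:=\partial_i\bar{T}_{\alpha_n}/\sqrt{\mu_n}$, where $\mu_n=\var(\partial_i\bar{T}_{\alpha_n}(x))$ is the variance of a first derivative (independent of $i$ by isotropy). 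Since at a fixed point the field and its two first derivatives are independent, the order-$2$ projection keeps only $H_2(\bar{T}_{\alpha_n})$ and $H_2(\widetilde{\partial}_1)+H_2(\widetilde{\partial}_2)$, with numerical coefficients read off from $H_2(0)=-1$ and from the planar Gaussian moments $\E\|X\|=\sqrt{\pi/2}$, $\E[\|X\|X_1^2]=\tfrac{3}{2}\sqrt{\pi/2}$. This produces $\mathcal{L}_{\alpha_n}[2]=\sqrt{\mu_n}\int_{\mathbb{S}^2}\big[-\tfrac{1}{4}H_2(\bar{T}_{\alpha_n})+\tfrac{1}{8}\big(H_2(\widetilde{\partial}_1)+H_2(\widetilde{\partial}_2)\big)\big]\,dx$.

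The decisive step is the reduction of these two integrals to the $I_\ell$'s. Integrating the Hermite terms over $\mathbb{S}^2$ turns them into $\int\bar{T}_{\alpha_n}^2$ and $\tfrac{1}{\mu_n}\int\|\nabla\bar{T}_{\alpha_n}\|^2$. Since eigenfunctions attached to distinct eigenvalues are $L^2(\mathbb{S}^2)$-orthogonal, all cross terms vanish and $\int\bar{T}_{\alpha_n}^2=C_{\alpha_n}^2\sum_\ell I_\ell$; by Green's identity on the sphere $\int\|\nabla T_\ell\|^2=\lambda_\ell\int T_\ell^2$ (again with no cross terms), so $\int\|\nabla\bar{T}_{\alpha_n}\|^2=C_{\alpha_n}^2\sum_\ell\lambda_\ell I_\ell$. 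Combining, the additive constants cancel and $\mathcal{L}_{\alpha_n}[2]$ collapses to a deterministic linear combination $\tfrac{C_{\alpha_n}^2}{8\sqrt{\mu_n}}\sum_\ell(\lambda_\ell-\bar{\lambda}_n)I_\ell$, with $\bar{\lambda}_n=\sum_\ell\lambda_\ell(2\ell+1)\big/\sum_\ell(2\ell+1)$ the multiplicity-weighted mean eigenvalue over the band. This simultaneously displays Berry's cancellation at the second-chaos level — a single eigenfunction gives $\lambda_\ell-\bar{\lambda}_n\equiv0$, hence $\mathcal{L}_\ell[2]=0$ exactly — and shows that the whole second chaos is driven by the dispersion of $\lambda_\ell$ inside the window.

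From here the variance is elementary. The $I_\ell$ are independent across $\ell$ (distinct, independent families $\{a_{\ell m}\}_m$), and a direct moment computation — from $I_\ell=\sum_m|a_{\ell m}|^2$, or from $\var(I_\ell)=\int\!\!\int 2\,(\E[T_\ell(x)T_\ell(y)])^2\,dx\,dy$ together with the Legendre orthogonality $\int_{-1}^1P_\ell^2=2/(2\ell+1)$ — gives $\var(I_\ell)=2(2\ell+1)$. Hence $\var(\mathcal{L}_{\alpha_n}[2])$ is an explicit ratio of the three power sums $S_0=\sum(2\ell+1)$, $S_1=\sum\lambda_\ell(2\ell+1)$, $S_2=\sum\lambda_\ell^2(2\ell+1)$, proportional to $(S_0S_2-S_1^2)/(S_0^2S_1)=\var_w(\lambda)/S_1$, where $\var_w(\lambda)=(S_0S_2-S_1^2)/S_0^2$ is the weighted eigenvalue variance over $\ell\in[\alpha_n n,n]$; all three sums have closed forms (e.g. $\sum_{\ell\le L}\lambda_\ell(2\ell+1)=\tfrac12 L(L+1)^2(L+2)$), so everything is reduced to arithmetic.

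The last step, the asymptotic evaluation, is where the main difficulty sits. The leading $g(n)^2n^8$ contributions of $S_0S_2$ and of $S_1^2$ cancel exactly — the analytic counterpart of the single-eigenfunction vanishing — leaving $S_0S_2-S_1^2$ of the smaller order $g(n)^4n^8$, so that $\var_w(\lambda)\asymp(1-\alpha_n^2)^2n^4\asymp g(n)^2n^4$ rather than $n^4$. Capturing this cleanly forces one to retain the next order in the telescoping/Euler--Maclaurin evaluation of the power sums and to track the two small parameters $g(n)$ and $1/(ng(n))$ separately. Writing $\alpha_n=1-g(n)$ and expanding, the leading term is of order $g(n)$, while the relative corrections organize exactly into $O(g(n))$ (from the higher orders of $1-\alpha_n^2=2g(n)-g(n)^2$) and $O(1/(ng(n)))$ (from the $2n+1$ boundary term in $S_0$ and the discrete-sum corrections), matching the form asserted in the statement. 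As an independent check, one may recompute $\var(\mathcal{L}_{\alpha_n}[2])=\int_{\mathbb{S}^2}\!\int_{\mathbb{S}^2}\cov(A(x),A(y))\,dx\,dy$ — with $A$ the second-chaos integrand above — through the two-point route of Theorem \ref{mainth}, reducing by isotropy to a single integral in the geodesic distance and using the Christoffel--Darboux form of the field-and-derivative covariances; there the delicate point is the uniform control of those covariances and the behaviour of the integrand near $\theta=0$ and $\theta=\pi$.
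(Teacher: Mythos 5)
Your proposal follows essentially the same route as the paper's proof: both reduce $\mathcal{L}_{\alpha_n}[2]$ via Green's formula and $L^2$-orthogonality of the $T_\ell$'s to a deterministic linear combination of the centred masses $\int_{\mathbb{S}^2}H_2(\tilde T_\ell)\,dx$ (your $I_\ell-\E I_\ell$, weighted by $\lambda_\ell-\bar\lambda_n$ with $\bar\lambda_n=2D_{\alpha_n}$), then use independence across $\ell$ together with $\var(I_\ell)=2(2\ell+1)$ from Legendre orthogonality, and finally evaluate the closed-form power sums with $\alpha_n=1-g(n)$; your observation that the $g(n)^2n^8$ and $g(n)^3 n^8$ terms of $S_0S_2$ and $S_1^2$ cancel, leaving $S_0S_2-S_1^2\asymp g(n)^4n^8$, is exactly the cancellation the paper exhibits. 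The one thing your outline leaves implicit is the numerical constant, and there you should be careful: writing $S_j:=\sum_{\ell=\alpha_n n}^{n}(\ell(\ell+1))^j(2\ell+1)$, your setup gives $\var(\mathcal{L}_{\alpha_n}[2])=\pi^2\,(S_0S_2-S_1^2)/(S_0^2S_1)\sim \pi^2 S_0^2/(12\,S_1)\sim \tfrac{\pi^2}{6}\,g(n)$, i.e.\ one quarter of the stated $\tfrac{2\pi^2}{3}g(n)$; the discrepancy traces to the paper's evaluation of $C_{\alpha_n}^6/D_{\alpha_n}=8\pi C_{\alpha_n}^4/S_1$ as $64\pi^3/(n^8g(n)^3)$ at leading order, whereas $C_{\alpha_n}^2\sim 2\pi/(n^2g(n))$ and $S_1\sim 2n^4g(n)$ give $16\pi^3/(n^8g(n)^3)$, so reconcile the constant before finalizing (the order $g(n)$ and the error structure $O(g(n))+O(1/(ng(n)))$ are unaffected).
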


\subsection{On the proof of Theorem \ref{mainth}}

In the proof of Theorem \ref{mainth} 
we follow the same idea as in \cite{W} for the nodal length of spherical eigenfunction. To be precisely, 
we employ the Kac-Rice formula, which reduces the computation of the length variance to the 2-point correlation function, given in terms of distribution of the values $\bar{T}_{\alpha_{n}}$ as well as their gradients $\nabla \bar{T}_{\alpha_{n}} \in T_x(\mathbb{S}^2)$, for all $x \in \mathbb{S}^2$. 
We derive then the 2-point correlation function for the field defined in (\ref{1}) and we investigate its asymptotic behaviour. To do so we need to study the covariance function (\ref{cov}) and its first and second derivatives in the high energy limit. 
Note that usually the analysis of the variance of $T_\ell$ exploits the Hilb's asymptotic (formula (8.21.17) on page 197 \cite{szego}):

\begin{equation}\label{hilbj0}
 P_\ell (\cos \theta)= \left( \frac{\theta}{\sin \theta}\right)^{1/2} J_0((\ell+1/2)\theta)+\delta(\theta),
\end{equation}
uniformly for $0 \leq \theta \leq \pi/2, $ where $J_0$ is the Bessel function of order 0 and the error term is 
\begin{equation*}
\delta(\theta) \ll \begin{cases}
\theta^{1/2}O(\ell^{-3/2}), & C\ell^{-1}<\theta <\pi/2\\
\theta^2 O(1), & 0<\theta <C\ell^{-1},
\end{cases}
\end{equation*}
where $ C>0$ is any constant and the constants involved in the "$O"-$notation depend on $C$ only.

Here we cannot directly use (\ref{hilbj0}) inside (\ref{cov}). Indeed substituting the Hilb's asymptotic inside the sum in (\ref{cov}) we are not able to control the error term $\sum_{\ell=\alpha_nn}^{n} \delta(\theta)$, which grows faster than the leading term. 
To avoid this problem we exploit the following formula, found in \cite{FXA} and derived by the
so-called Cristoffel-Darboux formula \cite{AH12},

	\begin{equation}\label{CDform}
\sum_{\ell=0}^{n} \sum_{m=-\ell}^{\ell} Y_{\ell, m} (x) Y_{\ell, m} (y)= \frac{n+1}{4\pi} P_n^{(1,0)}(\cos \theta(x,y)),
\end{equation} 

where $P_n^{(1,0)}$ is a Jacobi Polynomial, defined in general as
\begin{equation*}
P_n^{(\alpha, \beta)}(x)= \sum_{s=0}^n \binom{n+\alpha}{s} \binom{n+\beta}{%
	n-s} \bigg(\frac{x-1}{2}\bigg)^{n-s}\bigg(\frac{x+1}{2}\bigg)^s,
\end{equation*}
\begin{equation*}
R_{1,n}(\theta) =%
\begin{cases}
\theta^3 O(n), & 0\leq \theta \leq c/n \\ 
\theta^{1/2} O(n^{-3/2}), & c/n \leq \theta \leq \pi -\epsilon%
\end{cases}%
\end{equation*}
and $J_1$ is the Bessel function of order 1. 
Note that in our notation we have $P_\ell(x)=P_{\ell}^{(1,0)}(x).$

(\ref{CDform}) allows to write the covariance function as

	$$\Gamma_{\alpha_{n}}(\cos \theta)=C_{\alpha_{n}}^2 \bigg[ \frac{n+1}{4\pi} P_n^{(1,0)}(\cos \theta(x,y))-\frac{n\alpha_{n}}{4\pi} P_{n\alpha_{n}-1}^{(1,0)} (\cos \theta(x,y)) \bigg]$$

and hence to get rid of the sum. At this point we can appeal to the asymptotic for Jacobi polynomials, given in theorem 8.21.13 in \cite{szego}, namely

$$P_n^{\alpha,\beta}(\cos \theta)=n^{-1/2} k(\theta) \cos(N\theta+\gamma)+O(n^{-1/2}),$$
where 

$$k(\theta)=\pi^{-1/2} (\sin \theta/2)^{-\alpha-1/2}(\cos \theta/2)^{-\beta-1/2},$$

$$N=n+(\alpha+\beta+1)/2 \quad \mbox{ and } \quad \gamma=-(\alpha+1/2)\pi/2.$$

 To derive also the asymptotic behaviour of the first and second derivatives of the covariance function we exploit that 
$$\frac{d}{dx}P_n^{a,b}(x)=\frac{1}{2}(a+b+n+1) P_{n-1}^{(a+1,b+1)}(x)$$
(see (4.5.5) \cite{szego}) and proceed similarly.

\subsection{Plan of the paper}
The paper is organized as follows. In Section \ref{integrale section} we start giving the first and second moment of $\mathcal{L}_{\alpha_{n}}$. In particular we show an explicit formulation of the second moment in terms of the two-point correlation function and we give its asymptotic expansion in the high energy limit (Lemma \ref{asympK}). In Section \ref{asympvarsection} we prove Theorem \ref{mainth} exploiting Lemma \ref{asympK}, which has been proved in Section \ref{twopoint}.
 In Section \ref{secondprojsection} we compute the second chaotic component of $\mathcal{L}_{\alpha_{n}}$, proving Proposition \ref{2chaos}. Finally in Appendix \ref{asympcovsection} the asymptotic behaviour of the covariance function and its derivatives have been investigated, while Appendix \ref{technicalsection} collects some technical results used in the proof of Lemma \ref{asympK}.
 
\subsection*{Acknowledgements}
The author would like to thank Domenico Marinucci and Igor Wigman for suggesting this problem, for the helpful discussions and comments. The author is also grateful to Maurizia Rossi for some insightful conversations. 
This work has been partially supported 
by "Progetto di Ricerca GNAMPA-INdAM", 
codice CUP\_E55F22000270001.

\section{An Explicit Integral Formula for the second moment}\label{integrale section}

In this section we give the first moment of the nodal length $\mathcal{L}_{\alpha_{n}}$ and we express its second moment via the Kac-Rice formula introducing the two-point correlation function. 

First of all let us consider the gradient $$\nabla \bar{T}_{\alpha_n}(x)= C_{\alpha_n} \sum_{\ell=\alpha_nn}^n \nabla T_\ell(x);$$ the nodal length can be formally written 
as  $$\mathcal{L}_{\alpha_n}= \int_{\mathbb{S}^2} \delta_0 (\bar{T}_{\alpha_n}(x)) ||\nabla {\bar{T}_{\alpha_n}}(x)||\,dx$$
(as seen for example in \cite{MRW2017}). 

The variance of each component of $\nabla \bar{T}_{\alpha_n}(x)$ is given by $$D_{\alpha_{n}}:=C_{\alpha_n}^2 \sum_{\ell=\alpha_nn}^n \dfrac{2\ell+1}{4\pi}\dfrac{\ell(\ell+1)}{2}.$$

We normalize the gradient such that the variance of the components is equal to 1.
Hence

$$\mathcal{L}_{\alpha_n}= \sqrt{ D_{\alpha_{n}}} \int_{\mathbb{S}^2} \delta_0 (\bar{T}_{\alpha_n}(x)) ||\tilde{\nabla}{\bar{T}}_{\alpha_n}(x)||\,dx, \quad \mbox{ with } \quad \tilde{\nabla}{\bar{T}}_{\alpha_n}(x):= \frac{\nabla \bar{T}_{\alpha_n}(x)}{\sqrt{D_{\alpha_{n}}}}.$$

\subsection{First moment of the nodal length}
For $\epsilon>0$ we define

$$\mathcal{L}_{\alpha_n}^\epsilon:=\int_{\mathbb{S}^2} ||\nabla \bar{T}_{\alpha_{n}}(x)|| \chi_\epsilon(\bar{T}_{\alpha_{n}}(x))\, dx.$$

The same steps in \cite{MRW2017} and in \cite{ST} prove that, as $\epsilon \to 0,$ $$\lim_{\epsilon \to 0} \mathbb{E}[|\mathcal{L}_{\alpha_n}^\epsilon-\mathcal{L}_{\alpha_n}|^2]=0$$ in the $L^2-$sense. 
Then the mean of the nodal length is given by 
\begin{eqnarray*}
\mathbb{E}[\mathcal{L}_{\alpha_nn}]&=& \lim_{\epsilon \to 0} \mathbb{E}[\mathcal{L}_{\alpha_nn}^\epsilon]= \lim_{\epsilon \to 0} \dfrac{1}{2\epsilon} \sqrt{D_{\alpha_{n}}} \mathbb{E}[1_{-\epsilon,\epsilon}(\bar{T}_{\alpha_n}(x)) ||\tilde{\nabla}\bar{T}_{\alpha_n}(x)||]\\
&=& \sqrt{D_{\alpha_{n}}} \dfrac{|\mathbb{S}^2|}{2}.
\end{eqnarray*}

\subsection{Two-point correlation function and second moment} \label{sec2point}
In this section we follow the steps in \cite{W} readapted to our field.
One may define the Two-point correlation function $\tilde{K}(x,y)$ intrinsically as

$$\tilde{K}(x,y)= \frac{1}{(2\pi)\sqrt{1-\Gamma_{\alpha_n}(x,y)^2}} \mathbb{E}[||\nabla \bar{T}_{\alpha_{n}}(x)||\cdot ||\nabla \bar{T}_{\alpha_{n}}(y)|| | \bar{T}_{\alpha_{n}}(x) =\bar{T}_{\alpha_{n}}(x) =0].$$

Now we present the Kac-Rice formula. Given $x,y \in \mathbb{S}^2$ we consider the two local orthonormal frames $F^x(z)=\{ e_1^x, e_2^x \}$ and $F^y(z)=\{ e_1^y, e_2^y\}$ , defined in some neighbourhood of $x$ and $y$ respectively. 

For $x, y \in \mathbb{S}^2$
we define the following random vectors:
\begin{equation}\label{Z}
Z= (\bar{T}_{\alpha_n}(x), \bar{T}_{\alpha_n}(y), \tilde{\nabla}{\bar{T}_{\alpha_n}}(x), \tilde{\nabla}{\bar{T}_{\alpha_n}}(y)) \in \mathbb{R}^2 \times T_x(\mathbb{S}^2)\times T_y(\mathbb{S}^2).
\end{equation}

The random vector $Z$ is a mean zero Gaussian with covariance matrix 

$$\Sigma= \Sigma(x,y)=\begin{pmatrix}
A & B\\ B^t &C
\end{pmatrix},$$

where
\begin{equation}\label{matrixA}
A= A_{{\alpha_n}}(x,y)=\begin{pmatrix} 1 & \Gamma_{\alpha_n} \\ \Gamma_{\alpha_n} & 1
\end{pmatrix},
\end{equation}

\begin{equation}\label{matrixB}
B=B_{{\alpha_n}}(x,y) =\begin{pmatrix} 0 & \nabla_y \Gamma_{\alpha_n} \\ \nabla_x \Gamma_{\alpha_n}& 0
\end{pmatrix}
\end{equation}

and \begin{equation}\label{matrixC}
C=C_{{\alpha_n}}(x,y)= \begin{pmatrix} \frac{C_{\alpha_n}^2}{2} \sum_{\ell=\alpha_n n}^{n} \ell(\ell+1)I_2 & H \\ H^t&   \frac{C_{\alpha_n}^2}{2} \sum_{\ell=\alpha_n n}^{n} \ell(\ell+1)I_2
\end{pmatrix},
\end{equation}
with $H$ the "pseudo-Hessian" 
$$H_{\alpha_n}(x,y)=(\nabla_x \otimes \nabla_y) \Gamma_{\alpha_n}(x,y),$$ i.e. 

$H=(h_{jk})_{j,k=1,2}$
 with entries given by $h_{jk}= \frac{\partial^2}{\partial e_j^x \partial e_k^y} \Gamma_{\alpha_{n}}(x,y)$.

The covariance matrix of the Gaussian distribution of $Z$  in (\ref{Z}) conditioned upon $\bar{T}_{\alpha_n}(x)=\bar{T}_{\alpha_{n}}(y)=0$ is given by $$\Omega_{\alpha_{n}}(x,y)= C-B^tA^{-1}B.$$ Then the two-point correlation function is 

\begin{equation}
\begin{split}
\tilde{K}_{\alpha_{n}}(x,y)=\dfrac{1}{\sqrt{1-\Gamma_{\alpha_{n}}(x,y)^2}} \int_{\mathbb{R}^2\times \mathbb{R}^2 } ||W_1|| ||W_2||e^{-\frac{1}{2}(w_1,w_2) \Omega_{\alpha_{n}}^{-1}(w_1,w_2)^T} \dfrac{dw_1dw_2}{(2\pi)^3 \sqrt{det \Omega_{\alpha_{n}}(x,y)}}.
\end{split}
\end{equation}

Let $\theta, \varphi$ be the standard spherical coordinates on $\mathbb{S}^2$. Using the rotational invariance of the 2-point correlation function (see Remark 2.4 \cite{W}) we obtain
\begin{equation}
\mathbb{E}[\mathcal{L}_{\alpha_n}^2]=\int_{\mathbb{S}^2\times \mathbb{S}^2} \tilde{K}(x,y) \,dxdy= 2\pi |\mathbb{S}^2| \int_{0}^{\pi} \tilde{K}_{\alpha_{n}}(N,x(\theta)) \sin \theta \,d\theta
\end{equation}

where $x(\theta) \in \mathbb{S}^2$ is the point corresponding to the spherical coordinates $( \theta,\varphi=0)$. Note that $\tilde{K}(N,x(\theta))=\tilde{K}(x,y)$ for any $x,y \in \mathbb{S}^2$ with $d(x,y)=\theta.$ We have that

\begin{equation}\label{momento2}
\mathbb{E}[\mathcal{L}_{\alpha_n}^2]= 2\pi |\mathbb{S}^2| \int_{0}^{\pi} \tilde{K}_{\alpha_{n}}(\theta) \sin \theta \,d\theta,
\end{equation}
where $\tilde{K}_{\alpha_{n}}(\theta)=\tilde{K}_{\alpha_{n}}(x,y)$, $x,y \in \mathbb{S}^2$ being any pair of points with $d(x,y)=\theta$ (see Corollary 2.5 \cite{W}).

We recall that from the fact that $\tilde{K}(\psi)=\tilde{K}(\pi-\psi)$ we can integrate in the hemisphere, see \cite{W}, and then from (\ref{momento2}), rescaling $\theta=\psi/{(\alpha_n m)}$, we note that
the variance of the nodal length can be written as
\begin{equation}
Var[\mathcal{L}_{\alpha_n}]=16\pi^2 \frac{D_{\alpha_{n}}}{m\alpha_{n}}\int_{0}^{ m\alpha_{n}\pi/2} \left(K_{\alpha_{n}}(\psi)-\frac{1}{4}\right) \sin \frac{\psi}{m\alpha_{n}}\,d\psi,
\end{equation}
with $m=n+1/2$ (see \cite{W} for more details).

The main goal of the present paper reduces to understand the asymptotic behaviour of the
function ${K}_{\alpha_{n}}(\psi)$, which is given in the following lemma and whose proof can be found in Section \ref{twopoint}.

\begin{lemma}\label{asympK}
	For $C<\psi<(\pi/2) m\alpha_{n}$, with  $C >0$, denoting 
	\[ h:= \sum_{ k=1}^{\infty} g(n)^k+ \frac{1}{n}  +\frac{1}{2n} \sum_{k=1}^{\infty} g(n)^k, \] we have that, as $n\to \infty$,
	\begin{eqnarray}\label{Kbigtheta}
	K(\psi) &=& \frac{1}{4} + \frac{1}{256\pi^2 \psi^2}
+  \frac{1}{2\pi\psi} \sin\left(h\psi+2\psi +\frac{\psi}{2n} +O\left(\frac{\psi}{n^2}\right)\right)-\frac{75}{256\pi^2\psi^2} \cos\left(2h\psi+4\psi +\frac{2\psi}{n} +O\left(\frac{\psi}{n^2}\right)\right) \nonumber\\&&	
	+\frac{27}{64\pi^2\psi^2}  \sin\left(h\psi+2\psi-\frac{\psi}{n}+O\left(\frac{\psi}{n^2}\right)\right)
	-\frac{1}{4\pi\psi^2}\cos\left(h\psi+2\psi-\frac{\psi}{n}+O\left(\frac{\psi}{n^2}\right)\right) \nonumber\\&&+\frac{1}{4\pi \psi^2}\sin\left(\frac{h\psi}{2}+\psi-\frac{\psi}{2n}+O\left(\frac{\psi}{n^2}\right)\right)\cos\left(\psi-\frac{\psi}{2n}+O\left(\frac{\psi}{n^2}\right)\right)\nonumber\\&&
	-\frac{3}{2\pi \psi^2 } \sin\left(\frac{h\psi}{2}+\psi-\frac{\psi}{2n}+O\left(\frac{\psi}{n^2}\right)-\frac{\pi}{4}\right)\cos\left(\psi-\frac{\psi}{2n}+O\left(\frac{\psi}{n^2}\right)-\frac{5}{4}\pi\right)\nonumber\\&&+O\left(\frac{1}{\psi^3}+\frac{g(n)}{\psi^2}+\frac{1}{g(n)n\psi^2}\right)  .\nonumber
	\end{eqnarray}
\end{lemma}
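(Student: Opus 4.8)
The plan is to evaluate the Gaussian integral defining $\tilde{K}_{\alpha_n}$ in closed form in terms of the entries of the conditional covariance matrix $\Omega_{\alpha_n}=C-B^tA^{-1}B$, and then to insert the high-frequency asymptotics of the covariance $\Gamma_{\alpha_n}$ and its derivatives. Following the scheme of \cite{W}, I would first observe that $\int_{\mathbb{R}^2\times\mathbb{R}^2}\|w_1\|\,\|w_2\|\,e^{-\frac12(w_1,w_2)\Omega_{\alpha_n}^{-1}(w_1,w_2)^T}\,dw_1\,dw_2$ is the expectation $\mathbb{E}[\|W_1\|\,\|W_2\|]$ of the product of Euclidean norms of two correlated centred planar Gaussian vectors $(W_1,W_2)$ with covariance $\Omega_{\alpha_n}$. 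Once the gradients are normalised so that each component has unit variance, this expectation expands as a convergent power series in the small off-diagonal block of $\Omega_{\alpha_n}$; its leading (independent) term produces the $\frac14$ in the statement, while the successive terms are precisely what must be pushed to order $\psi^{-2}$.

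The second ingredient is the high-frequency asymptotics of the building blocks $\Gamma_{\alpha_n}$, $\nabla_x\Gamma_{\alpha_n}$, $\nabla_y\Gamma_{\alpha_n}$ and the pseudo-Hessian $H$ from (\ref{matrixB})--(\ref{matrixC}), carried out in Appendix \ref{asympcovsection}. The key device is the Christoffel--Darboux identity (\ref{CDform}), which writes $\Gamma_{\alpha_n}(\cos\theta)$ as the difference $C_{\alpha_n}^2[\frac{n+1}{4\pi}P_n^{(1,0)}(\cos\theta)-\frac{n\alpha_n}{4\pi}P_{n\alpha_n-1}^{(1,0)}(\cos\theta)]$, removing the sum whose error term cannot be controlled termwise. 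Differentiating through $\frac{d}{dx}P_n^{a,b}=\frac12(a+b+n+1)P_{n-1}^{(a+1,b+1)}$ and then invoking the Szegő asymptotic 8.21.13 of \cite{szego}, each block becomes a difference of two oscillatory terms whose phases are $\approx(n+1)\theta$ and $\approx n\alpha_n\theta$ (further shifted, in the derivatives, by the index changes these formulae produce). Under the rescaling $\theta=\psi/(m\alpha_n)$ with $m=n+1/2$, expanding the two frequencies in powers of $g(n)$ and $1/n$ is exactly what manufactures the geometric-series combination $h=\sum_k g(n)^k+\frac1n+\frac{1}{2n}\sum_k g(n)^k$ together with the $O(\psi/n^2)$ phase errors; the sum, difference and average of the two phases then yield the various arguments ($2h\psi+4\psi$, $h\psi+2\psi$, $\frac{h\psi}2+\psi$, and the bare $\psi$) appearing in the statement.

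With these asymptotics in hand I would substitute them into $A$, $B$, $C$ of (\ref{matrixA})--(\ref{matrixC}), use $\det A=1-\Gamma_{\alpha_n}^2$ to invert $A$, form $\Omega_{\alpha_n}$, normalise its entries (this is what converts $\tilde{K}_{\alpha_n}$ into $K_{\alpha_n}$) and feed them into the power series for $\mathbb{E}[\|W_1\|\,\|W_2\|]$. Products of the oscillatory factors coming from the two distinct frequencies generate, via product-to-sum identities, exactly the mixture of doubled, single, half and bare phases recorded in the statement, whose rational amplitudes and the $-\frac\pi4$ and $-\frac{5}{4}\pi$ phase shifts are then read off. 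Sorting the outcome by order in $1/\psi$ isolates the constant $\frac14$, the single non-oscillating term $\frac{1}{256\pi^2\psi^2}$ (the one that will survive integration and eventually yield the logarithm in Theorem \ref{mainth}), the genuinely oscillating $\psi^{-1}$ and $\psi^{-2}$ terms, and a remainder.

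The main obstacle is the bookkeeping together with the uniform error control. Because the field is a \emph{difference} of two Jacobi polynomials of nearby but distinct frequencies, every product of building blocks spawns a host of cross terms whose amplitudes must be computed exactly while the accumulated phase corrections are kept to the stated $O(\psi/n^2)$ precision. The delicate point is to verify that all the discarded contributions really collapse into $O(\psi^{-3}+g(n)\psi^{-2}+(g(n)n\psi^2)^{-1})$ uniformly on $C<\psi<(\pi/2)m\alpha_n$: one must control the Szegő remainder near the upper endpoint $\theta\to\pi/2$, and confirm that the $g(n)$- and $1/(g(n)n)$-type errors---which enter through the normalisations $C_{\alpha_n}^2$ and $D_{\alpha_n}$ and through the expansion of $h$---do not contaminate the leading $\psi^{-2}$ coefficient. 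These estimates are where Appendix \ref{technicalsection} does the heavy lifting.
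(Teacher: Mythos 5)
Your proposal follows essentially the same route as the paper: reduce the two-point correlation function via Kac--Rice to $\mathbb{E}[\|W_1\|\,\|W_2\|]$ for correlated Gaussian vectors, Taylor-expand this expectation and the factor $(1-\Gamma_{\alpha_n}^2)^{-1/2}$ in the small perturbation parameters, and feed in the asymptotics of $\Gamma_{\alpha_n}$ and its derivatives obtained from the Christoffel--Darboux representation as a difference of two Jacobi polynomials combined with Szeg\H{o}'s formula 8.21.13, the rescaling $\theta=\psi/(\alpha_n m)$, and product-to-sum identities. This matches the paper's proof in Section \ref{twopoint} together with Appendices \ref{asympcovsection} and \ref{technicalsection}, so no further comparison is needed.
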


\section{Asymptotic for the variance: Proof of Theorem \ref{mainth}}\label{asympvarsection}

In this section we prove Theorem \ref{mainth}. Let us define 

$$I_1:=16\pi^2 \frac{D_{\alpha_n}}{m\alpha_{n}}\int_{0}^{C} \left(K_{\alpha_{n}}(\psi)-\frac{1}{4}\right) \sin \frac{\psi}{m\alpha_{n}}\,d\psi$$
and
$$I_2:=16\pi^2 \frac{D_{\alpha_n}}{m\alpha_{n}}\int_{C}^{ m\alpha_{n}\pi/2} \left(K_{\alpha_{n}}(\psi)-\frac{1}{4}\right) \sin \frac{\psi}{m\alpha_{n}}\,d\psi.$$

We have the following two propositions. 
\begin{proposition}\label{big theta} For any constant $C>0$ and
	$C<\psi < m\alpha_{n}\pi/2$, as $n\to \infty,$
	
	$$I_2 = \frac{1}{32}\log n +O(1)+O(g(n)\log n).$$
	
\end{proposition}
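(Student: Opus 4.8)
The plan is to substitute the expansion of $K_{\alpha_n}(\psi)$ from Lemma \ref{asympK} into $I_2$ and integrate the resulting terms one at a time, after first pinning down the asymptotics of the prefactor. I would begin by recording that $16\pi^2\,\frac{D_{\alpha_n}}{(m\alpha_n)^2}\to\frac12$; more precisely, writing $D_{\alpha_n}=\left(\sum_{\ell}(2\ell+1)\tfrac{\ell(\ell+1)}{2}\right)\big/\left(\sum_\ell(2\ell+1)\right)$ and evaluating the two sums by Euler--Maclaurin gives $\frac{D_{\alpha_n}}{(m\alpha_n)^2}=\frac{1+\alpha_n^2}{4\alpha_n^2}\cdot\frac{n^2}{(n+1/2)^2}=\frac12+O(g(n))+O(1/n)$. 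This identity is the backbone of the argument, since multiplying the $O(g(n))$ correction by a logarithm is precisely what will produce the advertised $O(g(n)\log n)$ remainder.

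Next I would isolate the single non-oscillating term $\frac{1}{256\pi^2\psi^2}$, whose contribution I expect to furnish the leading $\frac{1}{32}\log n$. Substituting $u=\psi/(m\alpha_n)$ reduces its integral to $\frac{1}{m\alpha_n}\int_{C/(m\alpha_n)}^{\pi/2}\frac{\sin u}{u^2}\,du$, and since $\frac{\sin u}{u^2}=\frac1u+O(u)$ near the origin this equals $\frac{\log(m\alpha_n)+O(1)}{m\alpha_n}=\frac{\log n+O(1)}{m\alpha_n}$. Combining with the prefactor and using $16\pi^2\cdot\frac{1}{256\pi^2}\cdot\frac12=\frac1{32}$ yields $\frac1{32}\log n+O(1)+O(g(n)\log n)$, where the last term arises from the $O(g(n))$ correction in the prefactor.

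The remaining work is to show that every other term contributes at most $O(1)+O(g(n)\log n)$. For the genuinely oscillating summands, whose frequencies are $\approx 2$ or $\approx 4$ and hence bounded away from zero, I would expand the products against the slow factor $\sin(\psi/(m\alpha_n))$ via product-to-sum identities and bound the resulting cosine integrals by $\mathrm{Ci}$/$\mathrm{Si}$-type estimates (or a single integration by parts); because the two resulting frequencies differ only by $O(1/(m\alpha_n))$, the near-cancellation gains an extra factor $1/(m\alpha_n)$, so after multiplication by the $O(n)$ prefactor each such term is $O(1)$. The two product terms require more care: product-to-sum splits each into a fast part (frequency $\approx 2$, handled as above) and a \emph{slow} part of frequency $\approx h/2\asymp g(n)$, which is small and $n$-dependent. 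For the slow part I would split the range at $\psi\sim 1/h$, using $\sin(\beta\psi)\approx\beta\psi$ on the lower piece $\psi\lesssim 1/\beta$ and the convergence of $\int\frac{\sin v}{v}\,dv$ on the upper piece; both pieces give $O(1/(m\alpha_n))$, hence $O(1)$ after the prefactor. Finally the error term is integrated directly: $\frac{1}{\psi^3}$ contributes $O(1/(m\alpha_n))=O(1)$, the part $\frac{g(n)}{\psi^2}$ reproduces the main computation with an extra factor $g(n)$ and hence gives $O(g(n)\log n)$, and $\frac{1}{g(n)n\psi^2}$ gives $O\!\left(\frac{\log n}{g(n)n}\right)$, negligible under the running hypothesis $ng(n)\to\infty$. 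Summing the pieces gives $I_2=\frac1{32}\log n+O(1)+O(g(n)\log n)$.

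The main obstacle is the treatment of the slow-oscillating pieces produced by the product terms: their frequency $\asymp g(n)$ drifts to zero with $n$, so one cannot simply invoke a fixed oscillatory-integral bound, and one must verify by the range-splitting above that no additional logarithmic factor is generated. This is the quantitative incarnation of Berry's cancellation in the present band-limited setting, namely the statement that the purely oscillatory corrections to the two-point function do not feed into the leading order of the variance.
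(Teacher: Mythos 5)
Your proposal is correct and follows essentially the same route as the paper: substitute the expansion of $K_{\alpha_n}(\psi)$ from Lemma \ref{asympK}, extract the leading $\frac{1}{32}\log n$ from the non-oscillating term $\frac{1}{256\pi^2\psi^2}$ together with the prefactor asymptotics $D_{\alpha_n}/(m\alpha_n)^2=\tfrac12+O(g(n))+O(1/n)$, and reduce every oscillatory contribution to bounded sine/cosine integrals (the paper compresses this into the remarks that $\int_C^{\alpha_n n}\frac{\sin x}{x}\,dx=O(1)$ and $\int_C^{\alpha_n n}\sin x\,dx=O(1)$, while you make explicit the one genuinely delicate point, namely that the slow pieces of frequency $\asymp h\asymp g(n)$ coming from the product terms are pure sines and hence contribute $O(1)$ uniformly after splitting the range at $\psi\sim 1/h$). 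The only blemish is the first displayed claim, where ``$16\pi^2\,\frac{D_{\alpha_n}}{(m\alpha_n)^2}\to\frac12$'' should read ``$\frac{D_{\alpha_n}}{(m\alpha_n)^2}\to\frac12$''; your subsequent identity and the arithmetic $16\pi^2\cdot\frac{1}{256\pi^2}\cdot\frac12=\frac{1}{32}$ show this is only a slip of notation.
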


	\begin{proof}
Exploiting the expansion in Lemma (\ref{asympK}) we get
	\begin{eqnarray*}
I_2&=& 16\pi^2 n^2\frac{1}{2(m\alpha_n)^2} \int_{C}^{ m \alpha_n\pi/2} \bigg(\frac{1}{256\pi^2 \psi^2}
	+  \frac{1}{2\pi\psi} \sin\left(h\psi+2\psi +\frac{\psi}{2n} +O\left(\frac{\psi}{n^2}\right)\right)\\&&-\frac{75}{256\pi^2\psi^2} \cos\left(2h\psi+4\psi +\frac{2\psi}{n} +O\left(\frac{\psi}{n^2}\right)\right) \nonumber\\&&	
	+\frac{27}{64\pi^2\psi^2}  \sin\left(h\psi+2\psi-\frac{\psi}{n}+O\left(\frac{\psi}{n^2}\right)\right)
	-\frac{1}{4\pi\psi^2}\cos\left(h\psi+2\psi-\frac{\psi}{n}+O\left(\frac{\psi}{n^2}\right)\right) \nonumber\\&&+\frac{1}{4\pi \psi^2}\sin\left(\frac{h\psi}{2}+\psi-\frac{\psi}{2n}+O\left(\frac{\psi}{n^2}\right)\right)\cos\left(\psi-\frac{\psi}{2n}+O\left(\frac{\psi}{n^2}\right)\right)\nonumber\\&&
	-\frac{3}{2\pi \psi^2 } \sin\left(\frac{h\psi}{2}+\psi-\frac{\psi}{2n}+O\left(\frac{\psi}{n^2}\right)-\frac{\pi}{4}\right)\cos\left(\psi-\frac{\psi}{2n}+O\left(\frac{\psi}{n^2}\right)-\frac{5}{4}\pi\right)\nonumber\\&&+O\left(\frac{1}{\psi^3}+\frac{g(n)}{\psi^2}+\frac{1}{g(n)n\psi^2}\right) \bigg)\psi\,d\psi .\nonumber
\end{eqnarray*}
Solving the integrals and noting that $\int_{C}^{\alpha_{n}n} \frac{\sin x}{x} \,dx=O(1)$ and $\int_{C}^{\alpha_{n}n} \sin x \,dx=O(1)$ we find the thesis of the proposition.
\end{proof}

\begin{proposition}\label{small theta}
For	any constant $C>0$ we have, as $n\to \infty$

	$$\int_{0}^{C} \bigg|K_{\alpha_{n}}(\psi)-\frac{1}{4}\bigg| \sin\left(\frac{\psi}{\alpha_{n}m}\right) \, d\psi = O\left(\frac{1}{n}\right). $$
\end{proposition}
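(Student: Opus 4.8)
The plan is to separate the two factors in the integrand. Since $\psi$ ranges over the bounded window $[0,C]$ while $\alpha_n m\sim n$, the sine has a small argument, so $0\le\sin(\psi/(\alpha_n m))\le\psi/(\alpha_n m)=O(\psi/n)$. Writing $|K_{\alpha_n}(\psi)-\tfrac14|\le K_{\alpha_n}(\psi)+\tfrac14$, the constant part contributes $\tfrac14\int_0^C\frac{\psi}{\alpha_n m}\,d\psi=O(1/n)$, so it suffices to prove
\[
\int_0^C K_{\alpha_n}(\psi)\,\frac{\psi}{\alpha_n m}\,d\psi=O\!\left(\frac1n\right).
\]

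First I would produce a crude upper bound for $K_{\alpha_n}$ that is allowed to diverge only like $1/\psi$ at the diagonal. Recall $K_{\alpha_n}(\psi)=\tilde K_{\alpha_n}(\theta)$ with $\theta=\psi/(\alpha_n m)$, and that the Kac--Rice representation can be written as $\tilde K_{\alpha_n}=\frac{1}{2\pi\sqrt{1-\Gamma_{\alpha_n}^2}}\,\mathbb{E}[\,\|W_1\|\,\|W_2\|\,]$, where $(W_1,W_2)\sim N(0,\Omega_{\alpha_n})$, $\Omega_{\alpha_n}=C-B^tA^{-1}B$, are the conditional gradients $\tilde\nabla\bar T_{\alpha_n}(x),\tilde\nabla\bar T_{\alpha_n}(y)$ given $\bar T_{\alpha_n}(x)=\bar T_{\alpha_n}(y)=0$. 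Because $B^tA^{-1}B\succeq0$, conditioning reduces the variance, so $\mathbb{E}[\|W_i\|^2]\le\mathbb{E}\|\tilde\nabla\bar T_{\alpha_n}\|^2=2$; Cauchy--Schwarz then yields $\mathbb{E}[\|W_1\|\,\|W_2\|]\le2$ and hence
\[
K_{\alpha_n}(\psi)\le\frac{1}{\pi\sqrt{1-\Gamma_{\alpha_n}(\theta)^2}},\qquad \theta=\frac{\psi}{\alpha_n m},
\]
uniformly in $n$. Note that the factor $(\det\Omega_{\alpha_n})^{-1/2}$, which blows up as the two points collide, is reabsorbed into the bounded Gaussian expectation and causes no harm even where $\Omega_{\alpha_n}$ degenerates.

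The decisive ingredient is then the uniform quadratic lower bound
\[
1-\Gamma_{\alpha_n}\!\left(\tfrac{\psi}{\alpha_n m}\right)^2\ge c\,\psi^2,\qquad \psi\in(0,C],
\]
for some $c>0$ and all large $n$. For this I would use the near-diagonal covariance asymptotics from Appendix~\ref{asympcovsection}. Since $\bar T_{\alpha_n}$ has unit variance with $\Gamma_{\alpha_n}'(0)=0$ and $-\Gamma_{\alpha_n}''(0)=D_{\alpha_n}$, one has $\Gamma_{\alpha_n}(\theta)=1-\tfrac{D_{\alpha_n}}{2}\theta^2+\dots$, and since the window sits at frequencies $\ell\asymp n$ the ratio $D_{\alpha_n}/(\alpha_n m)^2$ converges to a positive constant; after the rescaling this gives $1-\Gamma_{\alpha_n}\ge c_0\psi^2$ in a fixed neighbourhood of $0$. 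For $\psi$ bounded away from $0$ the same asymptotics identify $\Gamma_{\alpha_n}(\psi/(\alpha_n m))$ with a Bessel-type scaling-limit covariance that is strictly below $1$ in modulus on every compact subset of $(0,\infty)$, so $1-\Gamma_{\alpha_n}^2$ is bounded below there by a positive constant; combining the two regimes yields the displayed bound on all of $(0,C]$.

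Plugging this in closes the argument: the $\psi$ supplied by the sine exactly cancels the $1/\psi$ singularity of $K_{\alpha_n}$, so that
\[
\int_0^C K_{\alpha_n}(\psi)\,\frac{\psi}{\alpha_n m}\,d\psi\le\frac{1}{\pi\,\alpha_n m}\int_0^C\frac{\psi\,d\psi}{\sqrt{1-\Gamma_{\alpha_n}(\psi/(\alpha_n m))^2}}\le\frac{1}{\pi\sqrt{c}\,\alpha_n m}\int_0^C d\psi=O\!\left(\frac1n\right),
\]
and adding the constant contribution gives the claim. I expect the only genuinely delicate point to be the \emph{uniformity in $n$} of the lower bound $1-\Gamma_{\alpha_n}^2\gtrsim\psi^2$: one must check that the higher-order terms in the covariance expansion, together with the $n$-dependence carried by $D_{\alpha_n}$ and $\alpha_n=1-g(n)$, do not spoil positivity anywhere on $(0,C]$. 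Everything else reduces to the soft Cauchy--Schwarz/conditioning estimate and the elementary inequality $\sin t\le t$.
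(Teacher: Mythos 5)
Your argument is correct and is essentially the proof the paper gives: the paper likewise bounds the two-point correlation function by a constant times $(1-\Gamma_{\alpha_n}^2)^{-1/2}$ (citing Lemma 4.4 of the needlet paper and Lemma 3.4 of Wigman's work rather than re-deriving it via conditioning and Cauchy--Schwarz as you do), and then Taylor-expands $P_\ell(\cos\theta)$ to get $1-\Gamma_{\alpha_n}^2=2\theta^2 D_{\alpha_n}\left[1+O(\theta^2 D_{\alpha_n})\right]$, which after the rescaling $\theta=\psi/(\alpha_n m)$ is exactly your lower bound $1-\Gamma_{\alpha_n}^2\gtrsim\psi^2$. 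The one point you flag as delicate --- uniform positivity of $1-\Gamma_{\alpha_n}^2$ for $\psi$ of order $C$, where $\theta^2D_{\alpha_n}$ is no longer small --- is handled in the paper only implicitly through the same $O(\theta^2D_{\alpha_n})$ error term, so your treatment is, if anything, slightly more careful on that step.
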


The proof of Proposition \ref{small theta} is collected in Appendix \ref{sectionsmalltheta}.

Proposition \ref{small theta} and Proposition \ref{big theta} complete the proof of Theorem \ref{mainth}. Indeed, as $n \to \infty$, we get

\begin{eqnarray*}
Var[\mathcal{L}_{\alpha_n}]&=&16\pi^2 \frac{D_{\alpha_n}}{m\alpha_{n}}\int_{0}^{ m\alpha_{n}\pi/2} \left(K_{\alpha_n}(\psi)-\frac{1}{4}\right) \sin \frac{\psi}{m\alpha_{n}}\,d\psi=I_1+I_2\\
&=&\frac{1}{32}\log n+O(g(n)\log n)+O(1)
\end{eqnarray*}
	
\section{The two-point correlation function: proof of Lemma \ref{asympK}} \label{twopoint}
We write here an explicit expression of the two-point correlation function and we prove Lemma \ref{asympK}. Denoting by $N$ the North Pole, we fix $x=N$ and, in view of the isotropy, using the spherical coordinates on $\mathbb{S}^2$ we write  ${\Gamma}_{\alpha_{n}}(x,y)={\Gamma}_{\alpha_{n}}(\cos
\theta),$ with $\theta \in [0,\pi)$, and then
the determinant of A, defined in (\ref{matrixA}) is
$$det A=1-\Gamma_{\alpha_{n}}(\cos\theta)^2= 1- \left(C_{\alpha_n}^2\sum_{\ell=\alpha_n n}^{n} \frac{2\ell+1}{4\pi} P_\ell(\cos \theta)\right)^2.$$

The matrix B in (\ref{matrixB}) can be written as

\begin{equation}
B= \pm C_{\alpha_n}^2 \begin{pmatrix} 0 & 0&  \sum_{\ell=\alpha_n n}^{n} \frac{2\ell+1}{4\pi}P_\ell^\prime(\cos \theta) \sin \theta & 0 \\ -\sum_{\ell=\alpha_n n}^{n}\frac{2\ell+1}{4\pi} P_\ell^\prime(\cos \theta) \sin \theta  & 0 & 0 & 0
\end{pmatrix}
\end{equation}

and 

\begin{equation}
B^tA^{-1}B= \dfrac{1}{1-  \Gamma_{\alpha_n}(\cos\theta)^2} \times  
\begin{pmatrix} \Gamma_{\alpha_{n}}'(\cos \theta)^2 &0 & \Gamma_{\alpha_{n}}'(\cos \theta)^2 \Gamma_{\alpha_{n}}(\cos \theta) &0 \\
0 &0&0&0\\
\Gamma_{\alpha_{n}}'(\cos \theta)^2 \Gamma_{\alpha_{n}}(\cos \theta)&0& \Gamma_{\alpha_{n}}'(\cos \theta)^2 &0\\
0&0&0&0
\end{pmatrix},
\end{equation}
where we recall that
$\Gamma_{\alpha_{n}}(\cos \theta)=  C_{\alpha_n}^2\sum_{\alpha_n n}^{n} \frac{2\ell+1}{4\pi}P_\ell(\cos \theta)$ is defined in (\ref{cov}) and then its derivative is
$$\Gamma_{\alpha_{n}}'(\cos \theta) =- C_{\alpha_n}^2\sum_{\ell=\alpha_n n}^{n} \frac{2\ell+1}{4\pi}P_\ell^\prime(\cos \theta) \sin \theta.$$

Moreover,

\begin{equation}
H= C_{\alpha_n}^2\begin{pmatrix} \sum_{\ell=\alpha_n n}^{n} \frac{2\ell+1}{4\pi} P_\ell^\prime(\cos \theta) \cos \theta -\frac{2\ell+1}{4\pi} P_\ell^{\prime \prime} (\cos \theta )\sin^2(\theta)& 0 \\ 0& \sum_{\ell=\alpha_n n}^{n} \frac{2\ell+1}{4\pi} P_\ell^\prime(\cos \theta)
\end{pmatrix}
\end{equation}
and 
$$\Omega_{\alpha_{n}}(x,y)= C-B^tA^{-1}B$$

\begin{equation}
=\begin{pmatrix} 
\frac{C_{\alpha_n}^2}{2} \sum_{\ell=\alpha_n n}^{n} \ell(\ell+1)+ \tilde{a}& 0&  \tilde{b}&0\\
0 & \frac{C_{\alpha_n}^2}{2} \sum_{\ell=\alpha_n n}^{n} \ell(\ell+1)& 0& \tilde{c}\\
\tilde{b}&0 & \frac{C_{\alpha_n}^2}{2} \sum_{\ell=\alpha_n n}^{n} \ell(\ell+1)+ \tilde{a} &0\\ 
0& \tilde{c} &0& \frac{C_{\alpha_n}^2}{2} \sum_{\ell=\alpha_n n}^{n} \ell(\ell+1)\\
\end{pmatrix} ,
\end{equation}

where
\begin{eqnarray*}
\tilde{a}&=&\tilde{a}_{\alpha_{n}}(\theta)=-\frac{1}{1 -  \left(C_{\alpha_n}^2\sum_{\alpha_n n}^{n} \frac{2\ell+1}{4\pi}  P_\ell(\cos \theta)\right)^2}  \left(C_{\alpha_n}^2\sum_{\ell=\alpha_n n}^{n} \frac{2\ell+1}{4\pi} P_\ell^\prime(\cos \theta) \sin \theta\right)^2,\\
\tilde{b}&=&\tilde{b}_{\alpha_n}(\theta)=C_{\alpha_n}^2 \left(\sum_{\ell=\alpha_n n}^{n} \frac{2\ell+1}{4\pi} P_\ell^\prime(\cos \theta) \cos \theta -\frac{2\ell+1}{4\pi} P_\ell^{\prime \prime} (\cos \theta )\sin^2(\theta)\right)\\
&&\qquad-  \dfrac{\left(C_{\alpha_n}^2\sum_{\alpha_n n}^{n} \frac{2\ell+1}{4\pi} P_\ell^\prime(\cos \theta)\sin \theta \right)^2 }{1-  \left(C_{\alpha_n}^2\sum_{\alpha_n n}^{n} \frac{2\ell+1}{4\pi} P_\ell(\cos \theta)\right)^2 }\left( C_{\alpha_n}^2\sum_{\ell=\alpha_n n}^{n}\frac{2\ell+1}{4\pi}  P_\ell(\cos \theta) \right),\\
\tilde{c}&=&\tilde{c}_{\alpha_{n}}(\theta)= C_{\alpha_n}^2\sum_{\ell=\alpha_n n}^{n}\frac{2\ell+1}{4\pi}  P_\ell^\prime (\cos \theta) .
\end{eqnarray*}

 Now rescaling $\theta= \frac{\psi}{\alpha_{n} m}$, we define

\begin{equation}
\Delta_{\alpha_{n}}(\psi):= \dfrac{\Omega_{\alpha_{n}}(\psi/m\alpha_{n})}{\frac{C_{\alpha_n}^2}{2} \sum_{\ell=\alpha_n n}^{n} \ell(\ell+1)}=\begin{pmatrix} 
1+ 2a& 0&  2b&0\\
0 & 1& 0& 2c\\
2b&0 & 1+2a &0\\ 
0& 2c &0& 1\\
\end{pmatrix} ,
\end{equation}

where 

\begin{eqnarray*}
a&:=&a_{\alpha_{n}}(\psi)=- \dfrac{1}{C_{\alpha_n}^2\sum_{\ell=\alpha_n n}^{n} \ell(\ell+1)}\tilde{a}_{\alpha_{n}}(\psi/({\alpha_{n}}m)),\\
b&:=&b_{\alpha_{n}}(\psi)= \dfrac{1}{C_{\alpha_n}^2\sum_{\ell=\alpha_n n}^{n} \ell(\ell+1)}\tilde{b}_{\alpha_{n}}(\psi/({\alpha_{n}}m)),\\
c&:=&c_{\alpha_{n}}(\psi)= \dfrac{1}{C_{\alpha_n}^2\sum_{\ell=\alpha_n n}^{n} \ell(\ell+1)}\tilde{c}_{\alpha_{n}}(\psi/({\alpha_{n}}m)).
\end{eqnarray*}

From Remark 2.10 in \cite{W}, we know that
\begin{equation}
K_{\alpha_{n}}(\psi)=
\dfrac{1}{2\pi \sqrt{1- \left(\Gamma_{\alpha_{n}}(\cos(\psi/(\alpha_{n}m)))\right)^2}} \mathbb{E}[||U|| ||V||],
\end{equation}
where
$(U,V)$ is a mean zero Gaussian random vector with covariance matrix $\Delta_{\alpha_{n}}(\psi)$ and 

$$\mathbb{E}[||U|| ||V||]= \dfrac{\pi}{2}+\dfrac{\pi}{2} a+\dfrac{\pi}{4} b^2-\dfrac{\pi}{16}a^2-\dfrac{3\pi}{8}ab^2+\dfrac{3\pi}{64}b^4+O(a^3+b^5+c^2).$$

Now we also have the expansion
$$\dfrac{1}{ \sqrt{1- \left(\Gamma_{\alpha_{n}}\left(\cos\frac{\psi}{\alpha_{n}m}\right)\right)^2}} 
 =1+ \frac{1}{2} \left(\Gamma_{\alpha_{n}}\left(\cos\frac{\psi}{\alpha_{n}m}\right)\right)^2+ \frac{3}{8} \left(\Gamma_{\alpha_{n}}\left(\cos\frac{\psi}{\alpha_{n}m}\right)\right)^4
+O \left( \left(\Gamma_{\alpha_{n}}\left(\cos\frac{\psi}{\alpha_{n}m}\right)\right)^6\right)$$

and hence

\begin{equation}\label{K}
\begin{split}
K_{\alpha_{n}}(\psi)&= \dfrac{1}{2\pi} \bigg[ 1+ \dfrac{1}{2} \left( \Gamma_{\alpha_{n}}\left(\cos\frac{\psi}{\alpha_{n}m}\right)\right)^2+ \dfrac{3}{8}  \left( \Gamma_{\alpha_{n}}\left(\cos\frac{\psi}{\alpha_{n}m}\right)\right)^4 +O \left(\left(\Gamma_{\alpha_{n}}\left(\cos\frac{\psi}{\alpha_{n}m}\right)\right)^6\right)\\
& \quad \times \dfrac{\pi}{2} \bigg[ 1+a+\dfrac{b^2}{2}-\dfrac{a^2}{8}-\dfrac{3}{4}ab^2+\dfrac{3}{32}b^4+O(a^3+b^5+c^2) \bigg]\\&
=  \dfrac{1}{2\pi} \frac{\pi}{2}\bigg[ 1+a+\frac{b^2}{2} -\frac{a^2}{8} -\frac{3}{4} ab^2 +\frac{3}{32} b^4 +\frac{1}{2} \left( \Gamma_{\alpha_{n}}\left(\cos\frac{\psi}{\alpha_{n}m}\right)\right)^2 
+\frac{1}{2} \left( \Gamma_{\alpha_{n}}\left(\cos\frac{\psi}{\alpha_{n}m}\right)\right)^2 a  \\&\quad+ \frac{1}{4} \left( \Gamma_{\alpha_{n}}\left(\cos\frac{\psi}{\alpha_{n}m}\right)\right)^2 b^2+ \dfrac{3}{8} \left( \Gamma_{\alpha_{n}}\left(\cos\frac{\psi}{\alpha_{n}m}\right)\right)^4 + O\left(\left( \Gamma_{\alpha_{n}}\left(\cos\frac{\psi}{\alpha_{n}m}\right)\right)^6+a^3+b^5+c^2\right) \bigg].\\
&=\frac{1}{4} \bigg\{ 1+a+\frac{b^2}{2}+\frac{1}{{2}}\Gamma_{\alpha_{n}}^2\left(\cos \frac{\psi}{\alpha_{n}m}\right)-\frac{a^2}{8} -\frac{3}{4}ab^2+\frac{3}{32}b^4+\frac{1}{{2}}\Gamma_{\alpha_{n}}^2 \left(\cos \frac{\psi}{\alpha_{n}m}\right)a \\&\quad+\frac{1}{4}\Gamma_{\alpha_{n}}^2\left(\cos \frac{\psi}{\alpha_{n}m}\right) b^2+ \frac{3}{8}\Gamma_{\alpha_{n}}^4\left(\cos \frac{\psi}{\alpha_{n}m}\right)+O\left(\Gamma_{\alpha_{n}}^6\left(\cos \frac{\psi}{\alpha_{n}m}\right)+a^3+b^5+c^2\right) \bigg\}.\\
\end{split}
\end{equation}

Now, exploiting the asymptotic behaviour of each single term, which can be found in Appendix \ref{asympcovsection} 
and \ref{technicalsection},
we obtain

\begin{eqnarray*}
	K_{\alpha_{n}}(\psi)
	&=& \frac{1}{4}+ \frac{1}{4}\bigg\{ \frac{1}{64\pi^2\psi^2}+ \frac{2}{\pi\psi} \sin\left(h\psi+2\psi +\frac{\psi}{2n} +O\left(\frac{\psi}{n^2}\right)\right)-\frac{75}{64\pi^2\psi^2} \cos\left(2h\psi+4\psi +\frac{2\psi}{n} +O\left(\frac{\psi}{n^2}\right)\right)\\&&	
	+\frac{54}{32\pi^2\psi^2}  \sin\left(h\psi+2\psi-\frac{\psi}{n}+O\left(\frac{\psi}{n^2}\right)\right)
	-\frac{1}{\pi\psi^2}\cos\left(h\psi+2\psi-\frac{\psi}{n}+O\left(\frac{\psi}{n^2}\right)\right)\\&&+\frac{1}{\pi \psi^2}\sin\left(\frac{h\psi}{2}+\psi-\frac{\psi}{2n}+O\left(\frac{\psi}{n^2}\right)\right)\cos\left(\psi-\frac{\psi}{2n}+O\left(\frac{\psi}{n^2}\right)\right)\\&&
	-\frac{6}{\pi \psi^2 } \sin\left(\frac{h\psi}{2}+\psi-\frac{\psi}{2n}+O\left(\frac{\psi}{n^2}\right)-\frac{\pi}{4}\right)\cos\left(\psi-\frac{\psi}{2n}+O\left(\frac{\psi}{n^2}\right)-\frac{5}{4}\pi\right)\\&&+O\left(\frac{1}{\psi^3}+\frac{g(n)}{\psi^2}+\frac{1}{g(n)n\psi^2}\right)  \bigg\},
\end{eqnarray*}

which leads to (\ref{Kbigtheta}).

\section{Second chaotic component: Proof of Proposition \ref{2chaos}}\label{secondprojsection}

In this section we compute explicitly the second chaotic component of the Wiener-It\^o chaos expansion mentioned in Section \ref{Chaoses}. We prove that its variance is $O(g(n))$ and then $\mathcal{L}_{\alpha_{n}}[2]$ cannot be the leading term of the chaos expansion of the nodal length $\mathcal{L}_{\alpha_{n}}$ given in (\ref{Proj}).

Before proving Proposition \ref{2chaos}, note that 
\begin{eqnarray*}
\sum_{\ell=\alpha_nn}^{n} \frac{2\ell+1}{4\pi}&=& \frac{1}{4\pi} \bigg[ n^2+n-(n\alpha_{n})^2+n\alpha_{n}+n-\alpha_{n}n+1 \bigg]\\
&=&\frac{2n^2g(n)}{4\pi} \bigg[ 1-\frac{g(n)}{2}+\frac{1}{g(n)n}+\frac{1}{2n^2g(n)} \bigg]
\end{eqnarray*}

and then 
\begin{equation}\label{C2}
C_{\alpha_n}^2=\frac{1}{\sum_{\ell=\alpha_nn}^{n} \frac{2\ell+1}{4\pi}}= \frac{4\pi}{2n^2g(n)\bigg[ 1-\frac{g(n)}{2}+\frac{1}{g(n)n}+\frac{1}{2n^2g(n)} \bigg]}
\end{equation}
which implies that
$$C_{\alpha_n}^2=\frac{4\pi}{2n^2g(n)} \left(1+\sum_{k=1}^{\infty}  \left(\frac{g(n)}{2}-\frac{1}{g(n)n}-\frac{1}{2n^2g(n)} \right)^k\right).$$

\begin{proof}[Proof of Proposition \ref{2chaos}]
The second chaotic projection is given by
	\begin{equation}
	\begin{split}
	\mathcal{L}_{\alpha_{n}}[2]= \sqrt{D_{\alpha_n}} \bigg( \frac{\beta_0 \alpha_{00}}{2} \int_{\mathbb{S}^2}  H_2(T_{\alpha_n}(x))\,dx+\frac{\beta_0 \alpha_{20}}{2} \int_{\mathbb{S}^2} ( \langle \tilde{\nabla} T_{\alpha_n}(x) , \tilde{\nabla} T_{\alpha_n}(x)  \rangle-2 )\,dx \bigg) 
	\end{split}
	\end{equation}
(see for example \cite{Rossi}),	where 
	$\alpha_{00}= \sqrt{\frac{\pi}{2}}$, $\alpha_{20}=\sqrt{\frac{\pi}{2}} \frac{1}{2},$ $\beta_0=\frac{1}{\sqrt{2\pi}}$, $\beta_2=-\frac{1}{\sqrt{2\pi}}.$
	
	Green's formula implies
	\begin{eqnarray*}
	\int_{\mathbb{S}^2} \langle \tilde{\nabla} T_{\alpha_n}(x) , \tilde{\nabla} T_{\alpha_n}(x)  \rangle \,dx&=&\frac{1}{D_{\alpha_n}} \int_{\mathbb{S}^2} \langle \nabla T_{\alpha_n}(x) , \nabla T_{\alpha_n}(x)  \rangle dx=\frac{1}{D_{\alpha_n}} \bigg(-\int_{\mathbb{S}^2}  T_{\alpha_n}(x)  \Delta T_{\alpha_n}(x)  dx \bigg)\\
	&=&\frac{1}{D_{\alpha_n}} \bigg(-\int_{\mathbb{S}^2} (C_{\alpha_n} \sum_{\ell=\alpha_nn}^{n} T_\ell(x))( C_{\alpha_n} \sum_{\ell^\prime=\alpha_nn}^{n}  \Delta T_{\ell^\prime} (x) )\,dx\bigg)\\
	&=& \frac{C_{\alpha_n}^2}{D_{\alpha_n}} \sum_{\ell=\alpha_nn}^{n} \sum_{\ell^\prime=\alpha_nn}^{n}   \ell^\prime (\ell^\prime+1) \int_{\mathbb{S}^2}  T_\ell(x)T_{\ell^\prime} (x)\,dx\\
	&=&\frac{C_{\alpha_n}^2}{D_{\alpha_n}} \sum_{\ell=\alpha_n}^{n} \ell(\ell+1) \int_{\mathbb{S}^2} T_\ell^2(x)\, dx.
	\end{eqnarray*}

	Then the second chaotic projection becomes
	
	\begin{equation}
	\begin{split}
\mathcal{L}_{\alpha_{n}}[2]= \sqrt{D_{\alpha_n}} \bigg( \frac{\beta_0 \alpha_{00}}{2} \int_{\mathbb{S}^2}  H_2(T_{\alpha_n}(x))\,dx+\frac{\beta_0 \alpha_{20}}{2} \int_{\mathbb{S}^2} \left(\frac{C_{\alpha_n}^2}{D_{\alpha_n}} \sum_{\ell=\alpha_n}^{n} \ell(\ell+1) \int_{\mathbb{S}^2} T_\ell^2(x)-2  \right)\,dx \bigg) .
	\end{split}
	\end{equation}
	Now we observe that 
	
	\begin{eqnarray*}
\int_{\mathbb{S}^2}  H_2(T_{\alpha_n}(x))\,dx&=& \int_{\mathbb{S}^2}  C_{\alpha_n}^2 \sum_{\ell,\ell^\prime=\alpha_nn}^{n} T_\ell(x) T_{\ell^\prime}(x)-1\,dx= \int_{\mathbb{S}^2}  C_{\alpha_n}^2 \sum_{\ell=\alpha_nn}^{n} T_\ell(x)^2-1\,dx\\
&=&\int_{\mathbb{S}^2}  C_{\alpha_n}^2 \sum_{\ell=\alpha_nn}^{n} T_\ell(x)^2- C_{\alpha_n}^2 \sum_{\ell=\alpha_nn}^{n} \frac{2\ell+1}{4\pi}\,dx=C_{\alpha_n}^2 \sum_{\ell=\alpha_nn}^{n} \frac{2\ell+1}{4\pi}  \int_{\mathbb{S}^2} \frac{T_\ell(x)^2}{Var[T_\ell(x)]}-1 \,dx\\
&=& C_{\alpha_n}^2 \sum_{\ell,\ell^\prime=\alpha_nn}^{n}\frac{2\ell+1}{4\pi}H_2(\tilde{T}_{\ell}(x))\,dx.
	\end{eqnarray*}
	
It follows that
	$$
	\int_{\mathbb{S}^2} \left(\frac{C_{\alpha_n}^2}{D_{\alpha_n}} \sum_{\ell=\alpha_n}^{n} \ell(\ell+1) T_\ell^2(x)-2  \right)\,dx= 2 \int_{\mathbb{S}^2} \left(\frac{C_{\alpha_n}^2}{D_{\alpha_n}} \sum_{\ell=\alpha_n}^{n} \frac{\ell(\ell+1)}{2} T_\ell^2(x)-1 \right)\,dx .$$

	Since $$D_{\alpha_n}= C_{\alpha_n}^2 \sum_{\ell=\alpha_nn}^{n} \frac{\ell(\ell+1)}{2} \frac{2\ell+1}{4\pi},$$ we can write
	$$1= \frac{ C_{\alpha_n}^2}{D_{\alpha_n}} \sum_{\ell=\alpha_nn}^{n} \frac{\ell(\ell+1)}{2}\frac{2\ell+1}{4\pi}$$
	and then
	
	\begin{eqnarray*}
2 \int_{\mathbb{S}^2} \left(\frac{C_{\alpha_n}^2}{D_{\alpha_n}} \sum_{\ell=\alpha_n}^{n} \frac{\ell(\ell+1)}{2} T_\ell^2(x)-1 \right)\,dx&=&2 \int_{\mathbb{S}^2} \frac{C_{\alpha_n}^2}{D_{\alpha_n}} \sum_{\ell=\alpha_n}^{n} \frac{\ell(\ell+1)}{2} \left(T_\ell^2(x)- \frac{2\ell+1}{4\pi} \right)\,dx\\
&=&2 \int_{\mathbb{S}^2} \frac{C_{\alpha_n}^2}{D_{\alpha_n}} \sum_{\ell=\alpha_n}^{n} \frac{\ell(\ell+1)}{2} \frac{2\ell+1}{4\pi} \left(\frac{T_\ell^2(x)}{\frac{2\ell+1}{4\pi}}- 1 \right)\,dx\\
&=&2 \int_{\mathbb{S}^2} \frac{C_{\alpha_n}^2}{D_{\alpha_n}} \sum_{\ell=\alpha_n}^{n} \frac{\ell(\ell+1)}{2} \frac{2\ell+1}{4\pi} \left(\frac{T_\ell^2(x)}{Var[T_\ell(x)]}- 1 \right)\,dx\\
&=& 2  \frac{C_{\alpha_n}^2}{D_{\alpha_n}} \sum_{\ell=\alpha_n}^{n} \frac{\ell(\ell+1)}{2} \frac{2\ell+1}{4\pi} \int_{\mathbb{S}^2} H_2(\tilde{T}_\ell(x) )\,dx,
	\end{eqnarray*}

	where in the last line we write
	
	$$\tilde{T}_\ell(x)=T_\ell(x)/\sqrt{Var[T_\ell(x)]}.$$
	
Hence, the second projection is
	\begin{equation}
	\begin{split}
\mathcal{L}_{\alpha_{n}}[2]&= \sqrt{D_{\alpha_n}} \bigg( \frac{\beta_0 \alpha_{00}}{2} \int_{\mathbb{S}^2}  H_2(T_{\alpha_n}(x))\,dx+\frac{\beta_0 \alpha_{20}}{2} 2  \frac{C_{\alpha_n}^2}{D_{\alpha_n}} \sum_{\ell=\alpha_n}^{n} \frac{\ell(\ell+1)}{2}\frac{2\ell+1}{4\pi} \int_{\mathbb{S}^2} H_2(\tilde{T}_\ell(x) )\,dx\bigg)\\&
	= \frac{\sqrt{D_{\alpha_n}}}{4} C_{\alpha_n}^2 \sum_{\ell=\alpha_nn}^{n} \left( \frac{\ell(\ell+1)}{2D_{\alpha_n}}-1\right) \frac{2\ell+1}{4\pi} \int_{\mathbb{S}^2} H_2(\tilde{T}_\ell(x))\,dx.
	\end{split}
	\end{equation}
Let us compute its variance. We get
	\begin{equation}
	\begin{split}
	Var(\mathcal{L}_{\alpha_{n}}[2])&= \frac{D_{\alpha_n}}{16} C_{\alpha_n}^4 \sum_{\ell=\alpha_nn}^{n} \left( \frac{\ell(\ell+1)}{2D_{\alpha_n}}-1\right) ^2 \left(\frac{2\ell+1}{4\pi}\right)^2 \int_{\mathbb{S}^2\times \mathbb{S}^2 } 2 P_\ell(\cos d(x,y))^2 \,dxdy\\&
	=2\pi^2 D_{\alpha_{n}} C_{\alpha_{n}}^4 \sum_{\ell=\alpha_nn}^{n} \left( \frac{\ell(\ell+1)}{2D_{\alpha_n}}-1\right)^2 \left(\frac{2\ell+1}{4\pi}\right)^2 \int_{0}^{\pi/2}  P_\ell(\cos \theta)^2 \sin \theta\,d\theta\\&
	=2\pi^2 D_{\alpha_{n}} C_{\alpha_{n}}^4 \sum_{\ell=\alpha_nn}^{n} \left( \frac{\ell(\ell+1)}{2D_{\alpha_n}}-1\right)^2 \left(\frac{2\ell+1}{4\pi}\right)^2\frac{1}{2\ell+1}\\&
	= \frac{2\pi^2C_{\alpha_n}^4 D_{\alpha_{n}}}{16\pi^2}  \sum_{\ell=\alpha_nn}^{n}\bigg\{ \frac{(\ell(\ell+1))^2(2\ell+1)}{4D_{\alpha_{n}}^2}+ (2\ell+1)- \frac{\ell(\ell+1)(2\ell+1)}{D_{\alpha_{n}}}\bigg\}\\
	&= \frac{C_{\alpha_n}^4 D_{\alpha_{n}}}{8	\cdot 4 D_{\alpha_{n}}^2}  \sum_{\ell=\alpha_nn}^{n}\bigg\{ (\ell(\ell+1))^2(2\ell+1)+4D_{\alpha_{n}}^2 (2\ell+1)-4D_{\alpha_{n}} \ell(\ell+1)(2\ell+1)\bigg\}.
		\end{split}
	\end{equation}
	
	Noting that $C_{\alpha_{n}}^2=\frac{1}{\sum \frac{2\ell+1}{4\pi}}$ and then $\frac{1}{C_{\alpha_{n}}^2}= \sum \frac{2\ell+1}{4\pi}$
	and $D_{\alpha_{n}}= C_{\alpha_{n}}^2 \sum \frac{\ell(\ell+1)}{2}\frac{2\ell+1}{4\pi},$
	we have that
	
	\begin{eqnarray*}
	Var(\mathcal{L}_{\alpha_{n}}[2])&=&  \frac{C_{\alpha_n}^4 D_{\alpha_{n}}}{8	\cdot 4 D_{\alpha_{n}}^2} \bigg\{ \sum_{\ell=\alpha_nn}^{n} (\ell(\ell+1))^2(2\ell+1)+\frac{4D_{\alpha_{n}}^2}{C_{\alpha_{n}}^2} 4\pi-4D_{\alpha_{n}} \frac{D_{\alpha_{n}}}{C_{\alpha_{n}}^2} 8\pi \bigg\}\\
	&=&  \frac{C_{\alpha_n}^4}{32D_{\alpha_{n}}} \bigg\{ \sum_{\ell=\alpha_nn}^{n} \ell^2(\ell+1)^2(2\ell+1)-16\pi \frac{D_{\alpha_{n}}^2}{C_{\alpha_{n}}^2} \bigg\}.
\end{eqnarray*}

	Since 
	$$\frac{D_{\alpha_{n}}^2}{C_{\alpha_{n}}^2}=\frac{ (C_{\alpha_{n}}^2 \sum_{\alpha_nn}^n \frac{\ell(\ell+1)}{2}\frac{2\ell+1}{4\pi})^2}{C_{\alpha_{n}}^2}=\frac{C_{\alpha_{n}}^2 (\sum_{\alpha_nn}^n \ell(\ell+1)(2\ell+1))^2}{64\pi^2},$$

	we get
	\begin{eqnarray}\label{varproj2}
	Var(\mathcal{L}_{\alpha_{n}}[2])		&=& \frac{C_{\alpha_n}^4}{32D_{\alpha_{n}}} \bigg\{ \sum_{\ell=\alpha_nn}^{n} \ell^2(\ell+1)^2(2\ell+1)-16\pi \frac{C_{\alpha_{n}}^2 (\sum_{\alpha_nn}^n \ell(\ell+1)(2\ell+1))^2}{64\pi^2}\bigg\} \nonumber\\
	&=&\frac{C_{\alpha_n}^6}{32D_{\alpha_{n}}} \bigg\{ \frac{1}{C_{\alpha_{n}}^2}\sum_{\ell=\alpha_nn}^{n} \ell^2(\ell+1)^2(2\ell+1)-16\pi \frac{ (\sum_{\alpha_nn}^n \ell(\ell+1)(2\ell+1))^2}{64\pi^2}\bigg\} .
	\end{eqnarray}
	
From (\ref{C2}) we have
	$$\frac{1}{C_{\alpha_{n}}^2}= \frac{2g(n)n^2-g(n)^2n^2+2n +1}{4\pi}.$$
	
	Now we have to compute the two sums appearing in (\ref{varproj2}). We start by considering
	
	\begin{eqnarray*}
	\sum_{\ell=\alpha_nn}^n \ell(\ell+1)(2\ell+1)&=&\sum_{\ell=\alpha_nn}^n 2\ell^3 +3\ell^2 +\ell= \sum_{\ell=1}^n (2\ell^3 +3\ell^2 +\ell)-\sum_{\ell=1}^{\alpha_nn-1} (2\ell^3 +3\ell^2 +\ell)\\
	&=&\frac{1}{2}\left[ n(n+1)^2(n+2)-(\alpha_{n}n-1)(\alpha_{n}n)^2(\alpha_{n}n+1) \right]\\
	&=& \frac{n^4}{2}\left[ 1+\frac{4}{n}+\frac{5}{n^2}+\frac{2}{n^3}-\alpha^4+\frac{\alpha^2}{n^2} \right].
	\end{eqnarray*}
	
	Replacing $\alpha_{n}=1-g(n)$ we have
	
	\begin{eqnarray*}
		\sum_{\ell=\alpha_nn}^n \ell(\ell+1)(2\ell+1)&=&\frac{n^4}{2}\left[4g(n)- 6g(n)^2+4g(n)^3-g(n)^4+\frac{4}{n}+\frac{6}{n^2}+\frac{2}{n^3}+\frac{g(n)^2}{n^2}-\frac{2 g(n)}{n^2} \right]\\
	&=&\frac{n^44g(n)}{2}\bigg[1- \frac{6}{4} g(n)+g(n)^2-\frac{g(n)^3}{4} +\frac{1}{ng(n)}+\frac{6}{4g(n)n^2}+\frac{2}{4g(n)n^3}\\&&\quad+\frac{g(n)}{4n^2}-\frac{2 }{4n^2} \bigg].
	\end{eqnarray*}

Similarly for the other sum of (\ref{varproj2}) we obtain

	\begin{eqnarray*}
		\sum_{\ell=\alpha_nn}^n &&\ell^2(\ell+1)^2(2\ell+1) = \sum_{\ell=1}^n (\ell^2(\ell+1)^2(2\ell+1))-\sum_{1}^{\alpha_nn-1} (\ell^2(\ell+1)^2(2\ell+1))\\
	&&=\frac{1}{3} n^2(n+1)^2(n+2)^2- \frac{1}{3} (\alpha_nn-1)^2(\alpha_nn)^2(\alpha_nn+1)^2 \\
	&&= \frac{n^6}{3}+2n^5+\frac{13n^4}{3}+4n^3+\frac{4n^2}{3}- \bigg[ \frac{(\alpha_nn)^6}{3}+2(\alpha_nn)^5+\frac{13(\alpha_nn)^4}{3}+4(\alpha_nn)^3+\frac{4(\alpha_nn)^2}{3} \bigg]\\
	&&= \frac{-n^2}{3}(ng(n)-2n-1)(g(n)n+1)(g(n)^2n-3g(n)n-g(n)+3n+3)(g(n)^2n\\
	&&\quad-g(n)n+g(n)+n+1)\\
	&&= \frac{-n^2}{3} \bigg[ n^4g(n)^6-6n^4g(n)^5-2g(n)^4n^2+15g(n)^4n^4+8g(n)^3n^2\\&&\quad-20n^4g(n)^3-12g(n)^2n^2+15g(n)^2n^4+8g(n)n^2
	\\
	&&\quad+2g(n)n+g(n)^2-2g-6n^4g-6n^3-15n^2-12n-3\bigg]\\
	&& = n^2\bigg[ -\frac{n^4g(n)^6}{3}+2n^4g(n)^5+\frac{2}{3}g(n)^4n^2-5g(n)^4n^4-\frac{8}{3}g(n)^3n^2\\&&\quad
	+\frac{20}{3}n^4g(n)^3+4g(n)^2n^2-5g(n)^2n^4-\frac{8}{3}g(n)n^2\\
	&& \quad-\frac{2}{3}g(n)n-\frac{1}{3}g(n)^2+\frac{2}{3}g(n)+2n^4g(n)+2n^3+5n^2+4n+1\bigg]
	\end{eqnarray*}
	
	and hence we get
	
	\begin{eqnarray*}
	&&Var(\mathcal{L}_{\alpha_{n}}[2])=\frac{C_{\alpha_n}^6}{32D_{\alpha_{n}}} \bigg\{ \frac{1}{C_{\alpha_{n}}^2}\sum_{\ell=\alpha_nn}^{n} \ell^2(\ell+1)^2(2\ell+1)-16\pi \frac{ (\sum_{\alpha_nn}^n \ell(\ell+1)(2\ell+1))^2}{64\pi^2}\bigg\} \\
&&=\frac{C_{\alpha_n}^6}{32D_{\alpha_{n}}}\bigg\{ \frac{2g(n)n^2-g(n)^2n^2+2n +1}{4\pi} \sum_{\ell=\alpha_nn}^{n} \ell^2(\ell+1)^2(2\ell+1)-\frac{ (\sum_{\alpha_nn}^n \ell(\ell+1)(2\ell+1))^2}{4\pi}\bigg\}\\
&&= \frac{C_{\alpha_n}^6 n^2}{32D_{\alpha_{n}} 4\pi}\bigg\{ (2g(n)n^2-g(n)^2n^2+2n +1) \bigg[ -\frac{n^4g(n)^6}{3}+2n^4g(n)^5+\frac{2}{3}g(n)^4n^2-5g(n)^4n^4\\	&&\quad -\frac{8}{3}g(n)^3n^2
+\frac{20}{3}n^4g(n)^3+4g(n)^2n^2-5g(n)^2n^4-\frac{8}{3}g(n)n^2-\frac{2}{3}g(n)n-\frac{1}{3}g(n)^2+\frac{2}{3}g(n)\\&&\quad+2n^4g(n)+2n^3+5n^2+4n+1\bigg]\\
	&&\quad -\left[2n^3g(n)- 3g(n)^2n^3+2g(n)^3n^3-\frac{g(n)^4}{2}n^3+2n^2+3n+1+\frac{g(n)^2n}{2}- ng(n) \right]^2\bigg\}\\
	&&= \frac{C_{\alpha_n}^6 n^2}{32D_{\alpha_{n}} 4\pi}\bigg\{ 1+6n+13n^2+12n^3+4n^4+4g(n)^2n^6+8g(n)n^5-12g(n)^2n^5+12g(n)n^4\\
	&&\quad -\frac{46}{3}g(n)^2n^4 +\frac{8}{3}g(n)^2n^3+\frac{8}{3}g(n)n^3+\frac{13}{3}g(n)^2 n^2-2g(n) n^2+\frac{2}{3}g(n)n-\frac{2}{3}g(n)^2n+\frac{2}{3}g(n)-\frac{1}{3}g(n)^2\\
	&&\quad -\left[2n^3g(n)- 3g(n)^2n^3+2g(n)^3n^3-\frac{g(n)^4}{2}n^3+2n^2+3n+1+\frac{g(n)^2n}{2}- ng(n) \right]^2\bigg\}.
\end{eqnarray*}
	
	After cancellations we find
	
	\begin{eqnarray*}
Var(\mathcal{L}_{\alpha_{n}}[2])&=&\frac{C_{\alpha_n}^6 n^2}{32D_{\alpha_{n}} 4\pi}\bigg\{ n^6g(n) \bigg[ 
\frac{8}{3n^3}-\frac{46g(n)}{3n^2}+\frac{52g(n)^2}{3n^2}+\frac{55g(n)^3}{3}+\frac{40g(n)^2}{3n}-17g(n)^3\\&&-\frac{8g(n)^2}{n}-\frac{20g(n)^2}{n^2}+\frac{22g(n)}{n^2}+O\left(\frac{g(n)}{n^3}\right)+O\left(\frac{g(n)^3}{n^2}\right)\bigg]\bigg\}\\
&=&\frac{C_{\alpha_n}^6 n^2}{32D_{\alpha_{n}} 4\pi}\bigg\{ n^6g(n)^3 \bigg[ 
\frac{8}{3n^3g(n)^2}-\frac{46g(n)}{3n^2g(n)^2}+\frac{52g(n)^2}{3n^2g(n)^2}+\frac{55g(n)^3}{3g(n)^2}+\frac{40g(n)^2}{3ng(n)^2}\\&&-17\frac{g(n)^3}{g(n)^2}-\frac{8g(n)^2}{ng(n)^2}-\frac{20g(n)^2}{n^2g(n)^2}+\frac{22g(n)}{n^2g(n)^2} \quad+O\left(\frac{g(n)}{n^3g(n)^2}\right)+O\left(\frac{g(n)^3}{g(n)^2n^2}\right)\bigg]\bigg\}\\
&=&\frac{C_{\alpha_n}^6 n^2}{32D_{\alpha_{n}} 4\pi}\bigg\{ n^6g(n)^3 \bigg[ 
\frac{8}{3n^3g(n)^2}-\frac{46}{3n^2g(n)}+\frac{52}{3n^2}+\frac{55g(n)}{3}+\frac{40}{3n}-17g(n)-\frac{8}{n}-\frac{20}{n^2}\\
&&\quad+\frac{22}{n^2g(n)}+O\left(\frac{1}{n^3g(n)}\right)+O\left(\frac{g(n)}{n^2}\right)\bigg]\bigg\}\\
&=&\frac{C_{\alpha_n}^6 n^2}{32D_{\alpha_{n}} 4\pi}\bigg\{ n^6g(n)^4 \bigg[ 
\frac{8}{3n^3g(n)^3}-\frac{46}{3n^2g(n)^2}+\frac{52}{3n^2g(n)}+\frac{55}{3}+\frac{40}{3ng(n)}-17-\frac{8}{ng(n)}\\&&-\frac{20}{n^2g(n)}+\frac{22}{n^2g(n)^2}+O\left(\frac{1}{n^3g(n)^2}\right)+O\left(\frac{1}{n^2}\right)\bigg]\bigg\}\\
 &=&\frac{C_{\alpha_n}^6}{32D_{\alpha_{n}} 4\pi} n^8 g(n)^4 \bigg[ 
\frac{8}{3n^3g(n)^3}-\frac{46}{3n^2g(n)^2}+\frac{52}{3n^2g(n)}+\frac{55}{3}+\frac{40}{3ng(n)}-17-\frac{8}{ng(n)}-\frac{20}{n^2g(n)}\\&&+\frac{22}{n^2g(n)^2}+O\left(\frac{1}{n^3g(n)^2}\right)+O\left(\frac{1}{n^2}\right)\bigg].
	\end{eqnarray*}
	
	Now we note that $$\frac{C_{\alpha_{n}}^6}{D_{\alpha_{n}}}=\frac{8\pi C_{\alpha_{n}}^4}{ \sum_{\alpha_nn}^n \ell(\ell+1)(2\ell+1)}= \frac{64\pi^3}{n^8g(n)^3} \left(1+\sum_{ k=1}^{\infty} \left(\frac{g(n)}{2}-\frac{1}{ng(n)}-\frac{1}{2n^2g(n)}\right)^k\right)^2$$
	$$\times \left[1+ \sum_{ k=1}^{\infty} \left(\frac{6g(n)}{4}-g(n)^2+\frac{g(n)^3}{4}-\frac{1}{ng(n)}-\frac{6}{4n^2g(n)} -\frac{2}{4g(n)n^3}-\frac{g(n)}{4n^2}+\frac{1}{2n^2}\right)^k\right].$$
	
Finally we get 
	\begin{eqnarray*}
Var(\mathcal{L}_{\alpha_{n}}[2])&=&\frac{1}{32*4\pi} \frac{64\pi^3}{n^8g(n)^3} \left(1+\sum_{ k=1}^{\infty} \left(\frac{g(n)}{2}-\frac{1}{ng(n)}-\frac{1}{2n^2g(n)}\right)^k\right)^2\\
&&\quad \times \left[1+ \sum_{ k=1}^{\infty} \left(\frac{6g}{4}-g(n)^2+\frac{g(n)^3}{4}-\frac{1}{ng(n)}-\frac{6}{4n^2g(n)} -\frac{2}{4g(n)n^3}-\frac{g(n)}{4n^2}+\frac{1}{2n^2}\right)^k\right]\\
&&\quad \times n^8g(n)^4 \bigg[ 
\frac{8}{3n^3g(n)^3}-\frac{46}{3n^2g(n)^2}+\frac{52}{3n^2g(n)}+\frac{4}{3}+\frac{40}{3ng(n)}-\frac{8}{ng(n)}-\frac{20}{n^2g(n)}\\&&\quad+\frac{22}{n^2g(n)^2}+O\left(\frac{1}{n^3g(n)^2}\right)+O\left(\frac{1}{n^2}\right)\bigg]\\
&=&\frac{1}{32*4\pi} \frac{64\pi^3}{n^8g(n)^3}  n^8g(n)^4 \frac{4}{3} \bigg[1+ \frac{g(n)}{2}-\frac{1}{ng(n)}+\frac{6}{4}g(n)-\frac{1}{ng(n)} +O\left(g(n)^2\right)\\&&\quad+O\left(\frac{1}{n^2g(n)^2}\right)+O\left(\frac{1}{n}\right)\bigg]\\
&=& \frac{2\pi^2}{3} g(n) \bigg[1+2g(n)-\frac{2}{n g(n)} +O\left(g(n)^2\right)+O\left(\frac{1}{n^2g(n)^2}\right) + O\left(\frac{1}{n}\right)\bigg],
	\end{eqnarray*}
which completes the proof.
	
\end{proof}

\appendix

\section{Asymptotics for the covariance kernel and its derivatives}\label{asympcovsection}

\subsection{asymptotic for the covariance function}
The following lemma gives the
asymptotic behaviour in the high frequency limit of the covariance function
for $C<\psi <\alpha_{n} m \pi/2$, for $C>0$.

\begin{lemma}
	\label{Lemma1}  Given ${\Gamma}_{\alpha_{n}}(x,y)$ as in (\ref{cov}) and denoting
	\begin{equation}\label{h}
	h:= \sum_{ k=1}^{\infty} g(n)^k+ \frac{1}{n}  +\frac{1}{2n} \sum_{k=1}^{\infty} g(n)^k, 
	\end{equation}

	we have that, for $C < \psi < \alpha_n m\pi/2$, $C>0$,
	as $n\to \infty$,	
	\begin{equation}  \label{asymptotic cov function}
	\begin{split}
{\Gamma}_{\alpha_{n}} \left(\cos \frac{\psi}{\alpha_{n} m}\right)&=
	\frac{C_{\alpha_{n}}^2\sqrt{n}}{4\pi \sqrt{\pi}}  \bigg(\sin \frac{\psi}{2\alpha_{n} m}\bigg)^{-3/2}  \left(\cos \frac{\psi}{2\alpha_{n}m}\right)^{-1/2} \frac{\psi h}{2} \\&\bigg[2\sin\left(\frac{h\psi}{2}+\psi-\frac{\psi}{2n}+O\left(\frac{\psi}{2n}\right)+\frac{\pi}{4}\right) \left(\frac{\sin \psi h}{\psi h}\right)+\frac{g(n)}{\psi h} \cos\left(\psi-\frac{\psi}{2n}+O\left(\frac{\psi}{2n}\right)-\frac{3}{4}\pi\right)\\&+O\left(\frac{1}{g(n)n\psi}+\frac{g(n)}{\psi}\right)\bigg]+O\left(\frac{1}{n^2g(n)\sqrt{n}}\right).
	\end{split}
	\end{equation}
	
	\begin{corollary}\label{gamma} For $C < \psi < \alpha_n m\pi/2$, $C>0$,
		as $n\to \infty$,	 we have,
	$${\Gamma}_{\alpha_{n}} \left(\cos \frac{\psi}{\alpha_{n} m}\right)= \sqrt{\frac{2}{\pi\psi}} \bigg\{ \sin\left(\frac{h\psi}{2}+\psi-\frac{\psi}{2n}+O(\frac{\psi}{2n})+\frac{\pi}{4}\right)+\frac{1}{2\psi} \cos\left(\psi-\frac{\psi}{2n}+O\left(\frac{\psi}{n^2}\right)-\frac{3}{4}\pi\right)$$$$+O\left(\frac{1}{ng(n)\psi}+\frac{g(n)}{\psi}\right) \bigg\}
+O\left(\frac{g(n)}{\sqrt{\psi}}\right)+O\left( \frac{1}{\sqrt{\psi}ng(n)}\right)+O\left(\frac{1}{n^2g(n)\sqrt{n}}\right).$$
	\end{corollary}
	
\end{lemma}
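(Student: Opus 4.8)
The plan is to by-pass the sum in (\ref{cov}) exactly as indicated in the introduction, using the Christoffel--Darboux representation
\[
\Gamma_{\alpha_n}\!\Big(\cos\tfrac{\psi}{\alpha_n m}\Big)=C_{\alpha_n}^2\Big[\tfrac{n+1}{4\pi}P_n^{(1,0)}(\cos\theta)-\tfrac{n\alpha_n}{4\pi}P_{n\alpha_n-1}^{(1,0)}(\cos\theta)\Big],\qquad \theta=\tfrac{\psi}{\alpha_n m},
\]
which follows from (\ref{CDform}) by subtracting the two partial Christoffel--Darboux sums and is precisely the device that removes the uncontrollable accumulated error of a term-by-term Hilb expansion. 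To each Jacobi polynomial I would then apply Szeg\H{o}'s asymptotic (Theorem 8.21.13 in \cite{szego}), uniform on $c/n\le\theta\le\pi-\epsilon$ and hence valid after the rescaling for every $\psi>C$: with $k(\theta)=\pi^{-1/2}(\sin\tfrac\theta2)^{-3/2}(\cos\tfrac\theta2)^{-1/2}$ one has $P_n^{(1,0)}(\cos\theta)=n^{-1/2}k(\theta)\cos\big((n+1)\theta-\tfrac{3\pi}4\big)+O(n^{-3/2})$ and the analogous expression for $P_{n\alpha_n-1}^{(1,0)}$ with phase $n\alpha_n\,\theta-\tfrac{3\pi}4$ and prefactor $(n\alpha_n-1)^{-1/2}$.

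Collecting the common factor $\tfrac{C_{\alpha_n}^2}{4\pi}k(\theta)$, the analysis reduces to $A_1\cos\phi_1-A_2\cos\phi_2$ with $A_1=(n+1)n^{-1/2}$, $A_2=n\alpha_n(n\alpha_n-1)^{-1/2}$, $\phi_1=(n+1)\theta-\tfrac{3\pi}4$, $\phi_2=n\alpha_n\,\theta-\tfrac{3\pi}4$. I would write $A_1\cos\phi_1-A_2\cos\phi_2=\sqrt n(\cos\phi_1-\cos\phi_2)+(A_1-\sqrt n)\cos\phi_1-(A_2-\sqrt n)\cos\phi_2$ and treat the two pieces separately. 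For the equal-amplitude piece the identity $\cos\phi_1-\cos\phi_2=-2\sin\tfrac{\phi_1+\phi_2}2\sin\tfrac{\phi_1-\phi_2}2$ splits the oscillation into a carrier (half-sum phase) and a band-width envelope (half-difference phase). Substituting $\theta=\psi/(\alpha_n m)$ with $\alpha_n=1-g(n)$, $m=n+1/2$ and Taylor-expanding, a direct computation gives $(n+1)\theta=\psi(1+h)-\tfrac{\psi}{2n}+O(\psi/n^2)$ and $n\alpha_n\,\theta=\psi-\tfrac{\psi}{2n}+O(\psi/n^2)$, with $h$ exactly the series (\ref{h}); hence the half-sum phase is $\tfrac{\psi h}2+\psi-\tfrac{\psi}{2n}+O(\psi/n^2)$ and, rewriting the constant through $\sin(x-\tfrac{3\pi}4)=-\sin(x+\tfrac\pi4)$, the carrier appears as $\sin\big(\tfrac{\psi h}2+\psi-\tfrac{\psi}{2n}+\tfrac\pi4\big)$, while the half-difference phase is $\tfrac{\psi h}2+O(\psi g(n)^2/n)$, giving a sinc-type envelope governed by $\tfrac{\psi h}2$. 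The amplitude-mismatch piece is smaller by a factor $g(n)$, since $A_2-\sqrt n=\sqrt n(\sqrt{\alpha_n}-1)+O(n^{-1/2})=-\tfrac{\sqrt n}2g(n)+\cdots$, and it produces the subleading $\cos(\psi-\tfrac{\psi}{2n}-\tfrac{3\pi}4)$ term. Inserting the value of $C_{\alpha_n}^2$ from (\ref{C2}) and the small-angle forms $\sin\tfrac\theta2\approx\tfrac{\psi}{2\alpha_n m}$, $\cos\tfrac\theta2\approx1$ then assembles (\ref{asymptotic cov function}).

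Corollary \ref{gamma} follows by simplifying the prefactor $\tfrac{C_{\alpha_n}^2\sqrt n}{4\pi\sqrt\pi}(\sin\tfrac\theta2)^{-3/2}(\cos\tfrac\theta2)^{-1/2}\tfrac{\psi h}2$: with $C_{\alpha_n}^2\sim\tfrac{2\pi}{n^2g(n)}$ and $\alpha_n m\sim n$ the factors $g(n)$ cancel against the $\tfrac{\psi h}2$ (via $\sin(\tfrac{\psi h}2)\approx\tfrac{\psi h}2$ for the envelope) and against the $\tfrac{g(n)}{\psi h}$ of the mismatch term, leaving the common amplitude $\sqrt{2/(\pi\psi)}$ and the two displayed terms, up to the stated relative errors $O(g(n)/\sqrt\psi)$ and $O(1/(ng(n)\sqrt\psi))$. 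I expect the main obstacle to be not any single trigonometric identity but the uniform error control over the whole range $C<\psi<\alpha_n m\pi/2$: one must verify that Szeg\H{o}'s per-polynomial error $O(n^{-3/2})$, after multiplication by the $O(n)$ coefficients and the normalisation $C_{\alpha_n}^2\sim(n^2g(n))^{-1}$, together with the phase-expansion errors $O(\psi/n^2)$ and the geometric-series tails of (\ref{C2}) and (\ref{h}), all remain below the retained terms, and in particular that the leading $\sqrt n$ amplitudes of the two Jacobi contributions cancel cleanly enough that the mismatch survives only at relative order $g(n)$ and $1/(ng(n))$. This is the delicate bookkeeping that singles out $h$ as the correct organising parameter and on which the variance computation of Section \ref{asympvarsection} depends.
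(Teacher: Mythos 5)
Your proposal is correct and follows essentially the same route as the paper: reduce the sum to a difference of two Jacobi polynomials via Christoffel--Darboux, apply Szeg\H{o}'s Theorem 8.21.13, split the difference of cosines with the sum-to-product identity, and expand the phases to identify $h$; your "equal-amplitude plus mismatch" decomposition is just a repackaging of the paper's expansion of the amplitude ratios $(n+1)n^{-1/2}$ and $n\alpha_n(n\alpha_n-1)^{-1/2}$. Your use of the $O(n^{-3/2})$ Szeg\H{o} error (rather than the $O(n^{-1/2})$ appearing in the paper's display (\ref{CD})) is in fact what is needed to obtain the stated final error $O\!\left(\tfrac{1}{n^2 g(n)\sqrt{n}}\right)$.
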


\begin{corollary}
For $C < \psi < \alpha_n m\pi/2$, $C>0$,
as $n\to \infty$,	 we have,	
	\begin{eqnarray*}
{\Gamma}_{{\alpha_{n}}} \left(\cos \frac{\psi}{\alpha_{n}m}\right)^2&=&
	\frac{C_{\alpha_{n}}^4n}{(4\pi)^2 \pi}  \bigg(\sin \frac{\psi}{2\alpha_{n} m}\bigg)^{-3}  \left(\cos \frac{\psi}{2\alpha_{n}m}\right)^{-1} \frac{\psi^2 h^2}{4}\\&&
	\bigg[\left(2+2\sin\left(h\psi+2\psi-\frac{\psi}{n}+O\left(\frac{\psi}{n^2}\right)\right)\right) \left(\frac{\sin h\psi}{h\psi}\right)^2\\&&+4\frac{g(n)}{\psi h} \left(\frac{\sin h\psi}{h\psi}\right)   \sin\left(\frac{h\psi}{2}+\psi-\frac{\psi}{2n}+O(\frac{\psi}{n^2})+\frac{\pi}{4}\right)\cos\left(\psi-\frac{\psi}{2n}+O\left(\frac{\psi}{n^2}\right)-\frac{3}{4}\pi\right)\\&&+O\left(\frac{1}{g(n)n\psi}+\frac{g(n)}{\psi}\right)+O\left(\frac{1}{\psi^2}\right)\bigg]+O\left(\frac{1}{n^{5/2}g(n)\sqrt{\psi}}\right).
	\end{eqnarray*}

\end{corollary}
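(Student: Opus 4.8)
The plan is to obtain the claimed asymptotic simply by squaring the expansion of $\Gamma_{\alpha_n}\!\left(\cos\frac{\psi}{\alpha_n m}\right)$ furnished by Lemma \ref{Lemma1} and then reorganizing the resulting products. Writing the content of that lemma schematically as
$$
\Gamma_{\alpha_n}\Big(\cos \tfrac{\psi}{\alpha_n m}\Big) = P\,\big(A + B + E_1\big) + E_2,
$$
where $P = \frac{C_{\alpha_n}^2 \sqrt n}{4\pi\sqrt\pi}\big(\sin\frac{\psi}{2\alpha_n m}\big)^{-3/2}\big(\cos\frac{\psi}{2\alpha_n m}\big)^{-1/2}\frac{\psi h}{2}$ is the prefactor, $A = 2\sin(\phi_1)\frac{\sin\psi h}{\psi h}$ with $\phi_1 = \frac{h\psi}{2}+\psi-\frac{\psi}{2n}+O(\frac{\psi}{n^2})+\frac\pi4$, $B = \frac{g(n)}{\psi h}\cos(\phi_2)$ with $\phi_2 = \psi-\frac{\psi}{2n}+O(\frac{\psi}{n^2})-\frac34\pi$, $E_1 = O\big(\frac{1}{g(n)n\psi}+\frac{g(n)}{\psi}\big)$ and $E_2 = O\big(\frac{1}{n^2 g(n)\sqrt n}\big)$, the prefactor squares cleanly: $P^2 = \frac{C_{\alpha_n}^4 n}{(4\pi)^2\pi}\big(\sin\frac{\psi}{2\alpha_n m}\big)^{-3}\big(\cos\frac{\psi}{2\alpha_n m}\big)^{-1}\frac{\psi^2 h^2}{4}$, reproducing exactly the prefactor in the statement.

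The heart of the computation is the expansion $(A+B+E_1)^2 = A^2 + 2AB + B^2 + 2(A+B)E_1 + E_1^2$. For the leading piece I would apply the double-angle identity $4\sin^2\phi_1 = 2 - 2\cos(2\phi_1)$; since $2\phi_1 = h\psi+2\psi-\frac{\psi}{n}+O(\frac{\psi}{n^2})+\frac\pi2$, the shift $\cos(\theta+\frac\pi2)=-\sin\theta$ converts $-2\cos(2\phi_1)$ into $+2\sin\big(h\psi+2\psi-\frac{\psi}{n}+O(\frac{\psi}{n^2})\big)$, whence
$$
A^2 = \Big(2 + 2\sin\big(h\psi+2\psi-\tfrac{\psi}{n}+O(\tfrac{\psi}{n^2})\big)\Big)\Big(\frac{\sin h\psi}{h\psi}\Big)^2,
$$
which is precisely the first line of the bracket. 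The mixed term $2AB = 4\frac{g(n)}{\psi h}\frac{\sin h\psi}{h\psi}\sin(\phi_1)\cos(\phi_2)$ reproduces the second line verbatim.

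It then remains to absorb the lower-order contributions into the stated error. Here I would invoke $h\sim g(n)$ from the definition \eqref{h}, which gives $B = O(1/\psi)$ and hence $B^2 = O(1/\psi^2)$, matching the $O(1/\psi^2)$ entry in the bracket; the cross terms $2(A+B)E_1$ are controlled by $|A|\le O(1)$ and $|B|=O(1/\psi)$, so they are of order $E_1 = O\big(\frac{1}{g(n)n\psi}+\frac{g(n)}{\psi}\big)$, while $E_1^2$ is of strictly higher order and is subsumed. Finally, squaring the additive remainder produces $2\,\Gamma_{\alpha_n}\cdot E_2 + E_2^2$; by Corollary \ref{gamma} one has $\Gamma_{\alpha_n} = O(1/\sqrt\psi)$, so $2\,\Gamma_{\alpha_n}\cdot E_2 = O\big(\frac{1}{n^{5/2}g(n)\sqrt\psi}\big)$, exactly the trailing error in the statement, and $E_2^2$ is negligible against it. The only genuinely delicate point is the bookkeeping of the phases in passing from $A^2$ to the sine form and the verification that each cross term is dominated by one of the error terms already present; no estimate beyond Lemma \ref{Lemma1} and Corollary \ref{gamma} is required.
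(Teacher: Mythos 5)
Your proposal is correct and follows exactly the route the paper intends: the corollary is obtained by squaring the expansion of Lemma \ref{Lemma1}, with the prefactor squaring to the stated one, the double-angle identity converting $4\sin^2$ of the phase into the $2+2\sin(\cdot)$ term, the cross term giving the second line, and the remaining products absorbed into the displayed errors (using $h\sim g(n)$ and $\Gamma_{\alpha_n}=O(\psi^{-1/2})$ for the additive remainder). No gap.
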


Exploiting the fact that $\sin x \sim x$ and $C^2=\frac{4\pi}{2n^2g(n)} (1+O(g(n)))$ we obtain the following corollary.
\begin{corollary}\label{gamma-quadro}
For $C < \psi < \alpha_n m\pi/2$, $C>0$,
as $n\to \infty$,	  we have,
	\begin{eqnarray*}
	\Gamma_{\alpha_{n}} \left(\cos \frac{\psi}{\alpha_n m}\right)^2&=&\frac{2}{\pi\psi} \bigg\{ \left(\frac{1}{2}+\frac{1}{2}\sin\left(h\psi+2\psi-\frac{\psi}{n}+O\left(\frac{\psi}{n^2}\right)\right)\right)+ \frac{g(n)}{\psi h} \sin\left(\frac{h\psi}{2}+\psi-\frac{\psi}{2n}+O(\frac{\psi}{n^2})+\frac{\pi}{4}\right)\\&&\times\cos\left(\psi-\frac{\psi}{2n}+O\left(\frac{\psi}{n^2}\right)\right)
	\\&&+O\left(\frac{1}{\psi^2}\right)+O\left(\frac{1}{g(n)n\psi}+\frac{g(n)}{\psi}\right) \bigg\}+ O\left(\frac{g(n)}{\psi}\right)+O\left(\frac{1}{ng(n)\psi}\right).
\end{eqnarray*}
\end{corollary}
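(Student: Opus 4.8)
The plan is to derive the statement directly from the (unnumbered) corollary proved immediately above, which already expresses $\Gamma_{\alpha_n}(\cos(\psi/(\alpha_n m)))^2$ as an explicit $n$-dependent prefactor times a bracket of oscillatory terms. All that remains is to reduce that prefactor to the universal factor $\frac{2}{\pi\psi}$ and to collect the sub-leading contributions into the stated error terms. The two tools are exactly the ones announced before the statement: the small-angle relation $\sin x\sim x$ (together with $\cos x\sim 1$) and the expansion $C_{\alpha_n}^2=\frac{4\pi}{2n^2 g(n)}(1+O(g(n)))$ recorded in \eqref{C2}.

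First I would simplify the prefactor $\frac{C_{\alpha_n}^4 n}{(4\pi)^2\pi}(\sin\frac{\psi}{2\alpha_n m})^{-3}(\cos\frac{\psi}{2\alpha_n m})^{-1}\frac{\psi^2 h^2}{4}$. Using the small-angle expansions $\sin\frac{\psi}{2\alpha_n m}=\frac{\psi}{2\alpha_n m}(1+O((\psi/\alpha_n m)^2))$ and $\cos\frac{\psi}{2\alpha_n m}=1+O((\psi/\alpha_n m)^2)$, the product $(\sin\frac{\psi}{2\alpha_n m})^{-3}(\cos\frac{\psi}{2\alpha_n m})^{-1}$ becomes $(\frac{2\alpha_n m}{\psi})^{3}(1+o(1))$. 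Substituting $C_{\alpha_n}^4=\frac{(4\pi)^2}{4n^4 g(n)^2}(1+O(g(n)))$, $m=n(1+\frac{1}{2n})$ and $\alpha_n=1-g(n)$, the powers of $n$ cancel and the prefactor reduces to $\frac{h^2}{2\pi g(n)^2\psi}(1+O(g(n))+O(1/n))$. Finally, from the definition \eqref{h} of $h$ one has $\frac{h}{g(n)}=1+\frac{1}{ng(n)}+O(g(n))$, hence $\frac{h^2}{g(n)^2}=1+\frac{2}{ng(n)}+O(g(n))+O(\frac{1}{n^2 g(n)^2})$, so the prefactor equals $\frac{1}{2\pi\psi}\left(1+O(g(n))+O(\frac{1}{ng(n)})\right)$.

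Next I would multiply this prefactor against the bracket of the preceding corollary, applying $\sin x\sim x$ once more to replace $\frac{\sin h\psi}{h\psi}$ by $1$. The first bracket term $(2+2\sin(h\psi+2\psi-\frac{\psi}{n}+\cdots))(\frac{\sin h\psi}{h\psi})^2$ then contributes $\frac{1}{2\pi\psi}(2+2\sin(\cdots))=\frac{2}{\pi\psi}(\frac12+\frac12\sin(\cdots))$, the non-oscillating half of which is the term ultimately responsible for Berry's cancellation. The cross term $4\frac{g(n)}{\psi h}(\frac{\sin h\psi}{h\psi})\sin(\cdots)\cos(\cdots)$ contributes $\frac{2}{\pi\psi}\cdot\frac{g(n)}{\psi h}\sin(\cdots)\cos(\cdots)$ after the same simplification, where I keep the coefficient in the form $\frac{g(n)}{\psi h}$ by using $\frac{g(n)}{h}=1+O(g(n))+O(\frac{1}{ng(n)})$. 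The remaining bracket terms, of sizes $O(\frac{1}{\psi^2})$ and $O(\frac{1}{g(n)n\psi}+\frac{g(n)}{\psi})$, are carried through unchanged, while the multiplicative $(1+O(g(n))+O(\frac{1}{ng(n)}))$ correction coming from the prefactor, acting on the two $O(1)$-sized leading terms, produces the two outer error terms $O(\frac{g(n)}{\psi})$ and $O(\frac{1}{ng(n)\psi})$. Collecting everything yields the displayed expansion.

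The routine part is the algebra of cancelling the powers of $n$ in the prefactor. The delicate point, and the step I would write out most carefully, is the error bookkeeping together with the justification of the replacements $\sin\frac{\psi}{2\alpha_n m}\sim\frac{\psi}{2\alpha_n m}$ and $\frac{\sin h\psi}{h\psi}\sim 1$: since at the upper end of the range $C<\psi<\alpha_n m\pi/2$ neither $\frac{\psi}{\alpha_n m}$ nor $h\psi$ is small, one must verify that each correction generated by passing the expansions of $C_{\alpha_n}^2$, $m$, $\alpha_n$ and $h$ through the oscillatory bracket is genuinely dominated by the claimed $O(\frac{1}{\psi^2})$, $O(\frac{g(n)}{\psi})$ and $O(\frac{1}{ng(n)\psi})$. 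This is exactly where I would isolate the purely oscillatory factors and bound the bounded-but-non-vanishing quantity $\frac{\sin h\psi}{h\psi}$ rather than naively setting it to $1$, confirming that the resulting discrepancy is absorbed by the stated errors on the whole interval.
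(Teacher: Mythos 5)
Your proposal is correct and coincides with the paper's own (very terse) derivation: Corollary \ref{gamma-quadro} is obtained from the preceding unnumbered corollary exactly by expanding the prefactor via $\sin x\sim x$ and $C_{\alpha_n}^2=\frac{4\pi}{2n^2g(n)}(1+O(g(n)))$ from (\ref{C2}), using $h/g(n)=1+O(g(n))+O(1/(ng(n)))$ from (\ref{h}), and replacing $\frac{\sin h\psi}{h\psi}$ by $1$, with the multiplicative corrections dumped into the outer $O(g(n)/\psi)+O(1/(ng(n)\psi))$ terms. The delicate point you single out --- that $\frac{\sin h\psi}{h\psi}$ and $\sin\frac{\psi}{2\alpha_n m}\sim\frac{\psi}{2\alpha_n m}$ are not genuinely small-argument approximations at the upper end $\psi\sim\alpha_n m\pi/2$ --- is real, but it is a feature of the paper's own statement rather than a defect introduced by your argument, since the paper performs the same replacements without further comment.
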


\begin{proof}[\bf{Proof of Lemma \ref{Lemma1}}]
	Looking at the covariance function in (\ref{cov}) we can write ${\Gamma}_{\alpha_{n}}(x,y)$ as \begin{equation}\label{var1}
	\begin{split}
{\Gamma}_{\alpha_{n}}(x,y)&= 
	C_{\alpha_{n}} \bigg(\sum_{\ell=0}^n \frac{2\ell+1}{4\pi} P_{\ell}(\langle x,y \rangle)-\sum_{\ell=0}^{n\alpha_{n}-1} \frac{2\ell+1}{4\pi} P_{\ell}(\langle x,y \rangle )\bigg).
	\end{split}
	\end{equation}
	
(see also \cite{Todino3}).
	Thanks to the following formula (\cite{FXA}, page 6), derived by the Christoffel-Darboux formula (\cite{AH12}), 
	\begin{equation}\label{CDformula}
	\sum_{\ell=0}^{n} \sum_{m=-\ell}^{\ell} Y_{\ell, m} (x) Y_{\ell, m} (y)= \frac{n+1}{4\pi} P_n^{(1,0)}(\cos \theta(x,y)),
	\end{equation} 
	where $P_n(\cos \theta)=P_n^{(1,0)}(\cos \theta)$,
	and to the addition formula (\cite{M e Peccati} page 66):
	\begin{equation}\label{addition formula}
	\sum_{m=-\ell}^{\ell} Y_{\ell, m} (x) Y_{\ell, m} (y)= \frac{2\ell+1}{4\pi} P_\ell(\cos \theta(x,y)),
	\end{equation}
	we obtain that 
	\begin{equation}\label{Gamma}
	{\Gamma}_{\alpha_{n}}(\cos \theta)=C_{\alpha_{n}}^2 \bigg[ \frac{n+1}{4\pi} P_n^{(1,0)}(\cos \theta(x,y))-\frac{n\alpha_{n}}{4\pi} P_{n\alpha_{n}-1}^{(1,0)} (\cos \theta(x,y)) \bigg].
	\end{equation}
	
	Exploiting Theorem 8.21.13 \cite{szego}, 
	
	\begin{equation}\label{CD}
P_n^{\alpha,\beta}(\cos \theta)=n^{-1/2} k(\theta) \cos(N\theta+\gamma)+O(n^{-1/2}),
	\end{equation}
	
	$$k(\theta)=\pi^{-1/2} (\sin \theta/2)^{-\alpha-1/2}(\cos \theta/2)^{-\beta-1/2},$$
	
	$$N=n+(\alpha+\beta+1)/2, \quad \gamma=-(\alpha+1/2)\pi/2;$$
	
we get
	
		\begin{equation}
	\begin{split}
	{\Gamma}_{\alpha_{n}}(\cos \theta)&=\frac{C_{\alpha_{n}}^2}{4\pi\sqrt{\pi}} \bigg[ (n+1) \left( n^{-1/2} \left(\sin \frac{\theta}{2}\right)^{-3/2} (\cos \theta)^{-1/2}  \cos\left((n+1)\theta-\frac{3}{4}\pi\right)+O\left(\frac{1}{\sqrt{n}}\right)\right) \\&-n{\alpha_{n}} \left( (n\alpha-1)^{-1/2}(\sin \theta/2)^{-3/2} \cos(\theta/2)^{-1/2} \cos\left(n\alpha\theta-\frac{3}{4}\pi\right) +O\left(\frac{1}{\sqrt{\alpha n-1}}\right) \right) \bigg].
	\end{split}
	\end{equation}
	
Expanding $(1+1/n)$ and  $(1-g(n))$ (since $g(n)\to 0$ as $n \to \infty$)

\[ {\Gamma}_{\alpha_{n}}(\cos \theta)= \frac{\sqrt{n}}{4\pi \sqrt{\pi}} C^2_{\alpha_{n}} \left(\sin \theta/2\right)^{-3/2} \left(\cos \theta/2\right)^{-1/2} \bigg[ \cos\left((n+1)\theta-\frac{3}{4}\pi\right)+\frac{1}{n} \cos\left((n+1)\theta-\frac{3}{4}\pi\right)\]

\[ -(1-g(n))\left(1+\frac{g(n)}{2}+\frac{1}{2n}+\frac{3}{8g(n)^2}+\frac{3}{8n^2}+O(g(n)^3)\right) \cos\left(n\alpha \theta-\frac{3}{4}\pi\right)  \bigg]+O\left(\frac{1}{n^2g(n)\sqrt{n}}\right) .\]
	
	We change variable $\theta=\psi/(\alpha_{n} m)$ to get
	
	\begin{eqnarray*}
		{\Gamma}_{\alpha_{n}}\left(\cos \frac{\psi}{\alpha_{n}m}\right)&=& \frac{\sqrt{n}}{4\pi \sqrt{\pi}} C^2_{\alpha_{n}} \left(\sin \frac{\psi}{2\alpha_{n}m}\right)^{-3/2} \left(\cos \frac{\psi}{2\alpha_{n}m}\right)^{-1/2} \bigg[ \cos\left((n+1)\frac{\psi}{2\alpha_{n}m}-\frac{3}{4}\pi\right)\\
			&& -\cos\left(\psi\frac{n}{m}-\frac{3}{4}\pi\right)+\frac{g(n)}{2}\cos\left(\psi\frac{n}{m}-\frac{3}{4}\pi\right)\\&&
			+\frac{1}{n} \bigg[ \cos\left((n+1)\frac{\psi}{2\alpha_{n}m}-\frac{3}{4}\pi\right)-\frac{1}{2}\cos\left(\psi \frac{n}{m}-\frac{3}{4}\pi\right)\bigg]+O(g(n)^2)\bigg]+O\left(\frac{1}{n^2g(n)\sqrt{n}}\right).
	\end{eqnarray*}
	
	Now we note that 
	
\[ \frac{n\psi}{m}= \psi-\frac{\psi}{2n}+O\left(\frac{\psi}{n^2}\right) \]

and

\[ \frac{(n+1)\psi}{\alpha_{n}m}=  \psi-\frac{\psi}{2n}+O\left(\frac{\psi}{n^2}\right) +h\psi, \]
	where $h$ is defined in (\ref{h}).
Substituting these expressions and using the fact that 

\begin{eqnarray*}
\cos\left((n+1)\frac{\psi}{2\alpha_{n}m}-\frac{3}{4}\pi\right) -\cos\left(\psi\frac{n}{m}-\frac{3}{4}\pi\right) &=& -2\sin\left(\psi-\frac{3}{4}\pi-\frac{\psi}{2n}+O\left(\frac{\psi}{n^2}\right)+\frac{h\psi}{2}\right)\sin\left(\frac{h\psi}{2}\right)\\&=& 2\sin\left(\frac{h\psi}{2}+\psi-\frac{\psi}{2n}+O\left(\frac{\psi}{n^2}\right)+\frac{\pi}{4}\right)
\sin \frac{h\psi}{2},
\end{eqnarray*}

we finally get
	
	\begin{eqnarray*}
	{\Gamma}_{\alpha_{n}}\left(\cos \frac{\psi}{\alpha_{n}m}\right)&=& \frac{\sqrt{n}}{4\pi \sqrt{\pi}} C_{\alpha_{n}}^2 \left( \sin \frac{\psi}{2\alpha_{n}m}\right)^{-3/2} \left(\cos \frac{\psi}{2\alpha_{n}m }\right)^{-1/2} \psi h\frac{1}{2} \\&& \times \bigg\{ 2\sin\left(\frac{h\psi}{2}+\psi-\frac{\psi}{2n}+O\left(\frac{\psi}{n^2}\right)+\frac{\pi}{4}\right) \left(\frac{\sin h\psi}{h\psi}\right)+\frac{g(n)}{h\psi}\\&& \times \cos\left(\psi-\frac{\psi}{2n}+O\left(\frac{\psi}{n^2}\right)-\frac{3}{4}\pi\right) +O\left(\frac{1}{g(n)n\psi}+\frac{g(n)}{\psi}\right)\bigg\}+O\left(\frac{1}{n^2g(n)\sqrt{n}}\right).
	\end{eqnarray*}
	
\end{proof}

\subsection{Asymptotic for the first derivative of the covariance function}

\begin{lemma}\label{derivata-prima}	For $C < \psi < \alpha_n m\pi/2$, $C>0$,
 as $n\to \infty$,	\begin{eqnarray*}
{\Gamma}_{\alpha_{n}}^\prime \left(\cos \frac{\psi}{\alpha_{n}m}\right)&=&C_{\alpha_{n}}^2\frac{n\sqrt{n}}{8\pi \sqrt{\pi} } \left(-\sin\frac{\psi}{\alpha_{n}m}\right) \left(\sin \frac{\psi}{2\alpha_{n}m}\right)^{-5/2} \left(\cos \frac{\psi}{2\alpha_{n}m}\right)^{-3/2}  \psi h \\&& \times \bigg[ 2 \frac{\sin \frac{h\psi}{2}}{h\psi} \sin\left(\frac{\psi h}{2}+\psi-\frac{\psi}{2n}+O\left(\frac{\psi}{n^2}\right)-\frac{\pi}{4}\right)+\frac{3g(n)}{2\psi h}\cos \left(\psi-\frac{\psi}{2n}+O\left(\frac{\psi}{n^2}\right)-\frac{5}{4}\pi\right) \\&&-\frac{3}{8}g(n)^2\frac{1}{\psi h} \cos\left(\psi-\frac{\psi}{2n}+O\left(\frac{\psi}{n^2}\right)-\frac{5}{4}\pi\right) + O\left(\frac{1}{n\psi }+\frac{g(n)^2}{\psi}\right)\bigg] \quad +O\left(\frac{\psi}{n^3\sqrt{n}g(n)}\right).
\end{eqnarray*}
\end{lemma}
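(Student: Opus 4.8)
The plan is to mirror the argument used to prove Lemma \ref{Lemma1}, now carried out one derivative higher. The starting point is again the Christoffel--Darboux representation (\ref{Gamma}), which writes $\Gamma_{\alpha_n}(\cos\theta)$ as a difference of two Jacobi polynomials $P^{(1,0)}$. Since the prime denotes differentiation of $\theta\mapsto\Gamma_{\alpha_n}(\cos\theta)$ with respect to $\theta$, I would first differentiate (\ref{Gamma}) in $\theta$. The chain rule produces the overall factor $-\sin\theta$, while the Jacobi differentiation identity $\frac{d}{dx}P_n^{(a,b)}(x)=\frac12(a+b+n+1)P_{n-1}^{(a+1,b+1)}(x)$ raises the parameters from $(1,0)$ to $(2,1)$ and lowers each degree by one, giving
\[
\frac{d}{d\theta}\Gamma_{\alpha_n}(\cos\theta)=-C_{\alpha_n}^2\sin\theta\left[\frac{n+1}{4\pi}\frac{n+2}{2}P_{n-1}^{(2,1)}(\cos\theta)-\frac{n\alpha_n}{4\pi}\frac{n\alpha_n+1}{2}P_{n\alpha_n-2}^{(2,1)}(\cos\theta)\right].
\]
The extra linear-in-degree prefactor $\tfrac{n+2}{2}\approx\tfrac n2$ is precisely what upgrades the normalization $\frac{\sqrt n}{4\pi\sqrt\pi}$ of Lemma \ref{Lemma1} to the $\frac{n\sqrt n}{8\pi\sqrt\pi}$ appearing in the statement, and the surviving $-\sin\frac{\psi}{\alpha_n m}$ is the image of the chain-rule factor after the rescaling $\theta=\psi/(\alpha_n m)$.

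Next I would apply Szeg\H{o}'s asymptotic (\ref{CD}) to each $P^{(2,1)}$, i.e.\ with $(\alpha,\beta)=(2,1)$. This produces the amplitude $k(\theta)=\pi^{-1/2}(\sin\tfrac\theta2)^{-5/2}(\cos\tfrac\theta2)^{-3/2}$ (hence the exponents $-5/2$ and $-3/2$ in the Lemma), the phase shift $\gamma=-(2+\tfrac12)\tfrac\pi2=-\tfrac54\pi$, and the frequencies $N=n+1$ for the first polynomial and $N=n\alpha_n$ for the second. I would then change variables $\theta=\psi/(\alpha_n m)$ with $m=n+\tfrac12$ and reuse verbatim the phase expansions established in the proof of Lemma \ref{Lemma1}, namely $(n+1)\theta=\psi-\frac{\psi}{2n}+h\psi+O(\psi/n^2)$ and $n\alpha_n\theta=\frac{n\psi}{m}=\psi-\frac{\psi}{2n}+O(\psi/n^2)$, with $h$ defined in (\ref{h}).

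The decisive step is the combination of the two oscillatory contributions, exactly as in Lemma \ref{Lemma1}. Factoring out the common prefactor $\frac{n\sqrt n}{8\pi\sqrt\pi}k(\theta)$, the first term carries amplitude $1+O(1/n)$ and the second carries $(1-g(n))^{3/2}+O(1/n)=1-\frac32 g(n)+\frac38 g(n)^2+O(g(n)^3)$, the half-integer power $3/2$ coming from the product $\frac{n\alpha_n}{n}\cdot\frac{n\alpha_n+1}{n}\cdot(n\alpha_n-2)^{-1/2}$. Writing the bracket as $\cos A-\cos B$ plus the prefactor imbalance $\left(\frac32 g(n)-\frac38 g(n)^2\right)\cos B$, with $A=(n+1)\theta-\frac54\pi$ and $B=n\alpha_n\theta-\frac54\pi$, I use $\cos A-\cos B=-2\sin\frac{A+B}2\sin\frac{A-B}2$ together with $-\sin(x-\frac54\pi)=\sin(x-\frac\pi4)$. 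Since $\frac{A-B}2=\frac{h\psi}2+O(\psi/n^2)$ and $\frac{A+B}2=\frac{h\psi}2+\psi-\frac{\psi}{2n}-\frac54\pi+O(\psi/n^2)$, this yields the leading beat term $2\frac{\sin(h\psi/2)}{h\psi}\sin\!\big(\frac{h\psi}2+\psi-\frac{\psi}{2n}-\frac\pi4+O(\psi/n^2)\big)$ after factoring $\psi h$, while the imbalance produces the single-cosine corrections $\frac{3g(n)}{2\psi h}\cos(\psi-\frac{\psi}{2n}+O-\frac54\pi)$ and $-\frac38 g(n)^2\frac1{\psi h}\cos(\cdots-\frac54\pi)$, matching the stated expansion.

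The main obstacle is not the main term but the bookkeeping of the remainders and the delicate near-cancellation: the two Szeg\H{o} cosines are at frequencies differing only by $h\psi$, so the leading behaviour sits in the small beat factor $\sin(h\psi/2)$, and one must keep the prefactor imbalance (which is \emph{not} multiplied by this small factor) to order $g(n)^2$ to recover the correct coefficients. Concretely, I expect the care to go into (i) verifying that the $O(n^{-1/2})$ errors in (\ref{CD}), once multiplied by the $\frac{n\sqrt n}{8\pi\sqrt\pi}$ prefactor and by $C_{\alpha_n}^2\asymp 1/(n^2 g(n))$, collapse into the claimed remainder $O\!\big(\frac{\psi}{n^3\sqrt n g(n)}\big)$, and (ii) controlling the $\theta$-dependent implied constants in $k(\theta)$ and in Szeg\H{o}'s bound, which blow up as $\theta\to 0,\pi$; this is exactly why the range is restricted to $C<\psi<\alpha_n m\pi/2$, keeping $\theta$ bounded away from the endpoints so that all amplitude factors are uniformly comparable to their values and the interior asymptotic applies.
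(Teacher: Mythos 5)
Your proposal is correct and follows essentially the same route as the paper: differentiating the Christoffel--Darboux representation (\ref{Gamma}) via the identity (\ref{formula}) to pass to $P^{(2,1)}$ Jacobi polynomials with the chain-rule factor $-\sin\theta$, applying Szeg\H{o}'s asymptotic (\ref{CD}) with $(\alpha,\beta)=(2,1)$, expanding the prefactors $\frac{(n+1)(n+2)}{\sqrt{n-1}}$ and $\frac{n\alpha_n(n\alpha_n+1)}{\sqrt{n\alpha_n-2}}$ to extract the $1-\frac32 g(n)+\frac38 g(n)^2$ imbalance, and combining the two cosines by the product-to-sum formula after the rescaling $\theta=\psi/(\alpha_n m)$. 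All the key identifications (the $-\frac54\pi$ phase, the $-5/2$ and $-3/2$ amplitude exponents, the beat factor $\sin(h\psi/2)$, and the treatment of the remainders) match the paper's proof.
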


\begin{corollary}For $C < \psi < \alpha_n m\pi/2$, $C>0$,
	as $n\to \infty$,	
	\begin{eqnarray*}
	{\Gamma}_{\alpha_{n}}'\left(\cos \frac{\psi}{\alpha_n m}\right)^2&=& \frac{C_{\alpha_n}^4}{64\pi^3} n^3h^2 \psi^2 \left(\sin \frac{\psi}{\alpha_{n}m}\right)^2\left( \sin\frac{\psi}{2\alpha_n m}\right)^{-5} \left( \cos \frac{\psi}{\alpha_{n}m}\right)^{-3} \\&&\bigg\{ \left(\frac{\sin \frac{\psi h}{2}}{\psi h/2}\right)^2 \left(\frac{1}{2}-\frac{1}{2} \sin\left(h\psi+2\psi-\frac{\psi}{n}+O\left(\frac{\psi}{n^2}\right)\right)\right) 
	\\&&+\frac{3g(n)}{\psi h } \sin\left(\frac{h\psi}{2}+\psi-\frac{\psi}{2n}+O\left(\frac{\psi}{n^2}\right)-\frac{\pi}{4}\right)\cos\left(\psi-\frac{\psi}{2n}+O\left(\frac{\psi}{n^2}\right)-\frac{5}{4}\pi\right) \frac{\sin(\psi h /2)}{\psi h/2}\\&&-\frac{3}{4\psi h}g(n)^2 \sin\left(\frac{h}{2}+\psi-\frac{\psi}{2n}+O\left(\frac{\psi}{n^2}\right)-\frac{\pi}{4}\right)\cos\left(\psi-\frac{1}{2n}+O\left(\frac{1}{n^2}\right)-\frac{\pi}{4}\right)\\&&+ O\left(\frac{g(n)^2}{\psi}\right)+O\left(\frac{1}{\psi n}\right)+O\left(\frac{1}{\psi^2}\right) \bigg\}+O\left(\frac{\sqrt{\psi}}{n^2\sqrt{n}g(n)}\right).
	\end{eqnarray*}
\end{corollary}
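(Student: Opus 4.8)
The plan is to obtain the stated expansion by squaring the asymptotic for $\Gamma'_{\alpha_n}\!\left(\cos\frac{\psi}{\alpha_n m}\right)$ established in Lemma~\ref{derivata-prima}, in exactly the same way that the expansion of $\Gamma_{\alpha_n}\!\left(\cos\frac{\psi}{\alpha_n m}\right)^2$ is deduced by squaring Lemma~\ref{Lemma1}. Write the conclusion of Lemma~\ref{derivata-prima} as $\Gamma'_{\alpha_n}=P\cdot\mathcal B+\mathcal R$, where $P$ is the explicit prefactor $C_{\alpha_n}^2\frac{n\sqrt n}{8\pi\sqrt\pi}\left(-\sin\frac{\psi}{\alpha_n m}\right)\left(\sin\frac{\psi}{2\alpha_n m}\right)^{-5/2}\left(\cos\frac{\psi}{2\alpha_n m}\right)^{-3/2}\psi h$, $\mathcal B$ is the bracketed factor, and $\mathcal R=O\!\left(\frac{\psi}{n^3\sqrt n\,g(n)}\right)$ is the outer error. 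I would then expand $(\Gamma'_{\alpha_n})^2=P^2\mathcal B^2+2P\mathcal B\,\mathcal R+\mathcal R^2$ and treat each group separately.

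First, $P^2$ produces the announced prefactor: the numerical constant $\left(\frac{n\sqrt n}{8\pi\sqrt\pi}\right)^2=\frac{n^3}{64\pi^3}$, the sign is lost, and the exponents $-5/2,-3/2,1$ of the trigonometric factors and of $\psi h$ double to $-5,-3,2$. Next I square $\mathcal B$. For the leading term I use $\sin^2(\varphi-\frac\pi4)=\frac12-\frac12\sin(2\varphi)$ with $\varphi=\frac{\psi h}2+\psi-\frac{\psi}{2n}+O\!\left(\frac{\psi}{n^2}\right)$, so that $2\varphi=h\psi+2\psi-\frac{\psi}{n}+O\!\left(\frac{\psi}{n^2}\right)$; combined with the identity $4\frac{\sin^2(\psi h/2)}{(\psi h)^2}=\left(\frac{\sin(\psi h/2)}{\psi h/2}\right)^2$, this reproduces the first term inside the curly bracket. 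The two cross terms $2\cdot(\text{leading})\cdot(g(n)\text{-term})$ and $2\cdot(\text{leading})\cdot(g(n)^2\text{-term})$ give, after rewriting $\frac{\sin(\psi h/2)}{\psi h}=\frac12\frac{\sin(\psi h/2)}{\psi h/2}$, the second and third terms (in the $g(n)^2$ cross term one further replaces $\frac{\sin(\psi h/2)}{\psi h/2}$ by $1$, the discrepancy being lower order and absorbed into the error).

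Second, I collect the genuinely subleading contributions: the square of the $g(n)$-term is $O\!\left(\frac{g(n)^2}{\psi^2 h^2}\right)$, which is $O\!\left(\frac{1}{\psi^2}\right)$ since $h\gtrsim g(n)$; the products of the main terms with the bracket error $O\!\left(\frac{1}{n\psi}+\frac{g(n)^2}{\psi}\right)$ contribute $O\!\left(\frac{1}{\psi n}\right)+O\!\left(\frac{g(n)^2}{\psi}\right)$. Together these yield the error $O\!\left(\frac{g(n)^2}{\psi}\right)+O\!\left(\frac{1}{\psi n}\right)+O\!\left(\frac{1}{\psi^2}\right)$ displayed inside the brace.

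The step requiring the most care is the outer error. Here I would first record the crude size bound $\Gamma'_{\alpha_n}\!\left(\cos\frac{\psi}{\alpha_n m}\right)=O\!\left(n/\sqrt\psi\right)$: using $C_{\alpha_n}^2=\frac{4\pi}{2n^2 g(n)}(1+O(g(n)))$ together with $\sin\theta\asymp\theta$ and $\left(\sin\frac\theta2\right)^{-5/2}\asymp\theta^{-5/2}$ for $\theta=\psi/(\alpha_n m)$, the prefactor $P$ is of order $n\psi^{-1/2}\left(1+\frac{1}{ng(n)}\right)$, while $\mathcal B=O(1)$ because $\frac{\sin(\psi h/2)}{\psi h}$ is bounded. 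Hence the cross term is $2P\mathcal B\,\mathcal R=O\!\left(\frac{n}{\sqrt\psi}\cdot\frac{\psi}{n^3\sqrt n\,g(n)}\right)=O\!\left(\frac{\sqrt\psi}{n^2\sqrt n\,g(n)}\right)$, which is exactly the outer error in the statement, and $\mathcal R^2=O\!\left(\frac{\psi^2}{n^7 g(n)^2}\right)$ is negligible against it since $\psi\le\alpha_n m\pi/2$ forces $\psi^{3/2}\ll n^{9/2}g(n)$. Assembling the prefactor, the three explicit terms, the collected errors and the outer error gives the claimed expansion.
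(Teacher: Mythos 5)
Your proposal is correct and matches the paper's intent exactly: the corollary is stated without proof precisely because it follows by squaring the expansion of Lemma \ref{derivata-prima}, which is what you do, and your bookkeeping of the cross terms, the half-angle identity for the leading square, and the size estimate $\Gamma'_{\alpha_n}=O(n/\sqrt{\psi})$ used to control the outer error are all accurate. The only discrepancies with the displayed statement (the argument $\cos\frac{\psi}{\alpha_n m}$ in place of $\cos\frac{\psi}{2\alpha_n m}$ in the prefactor, and the garbled arguments in the $g(n)^2$ term) are typos in the paper, and your derivation in fact produces the corrected version.
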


\begin{corollary}\label{gamma-primo-quadro} For $C < \psi < \alpha_n m\pi/2$, $C>0$,
	as $n\to \infty$,	
	\begin{eqnarray*}
	\Gamma_{\alpha_{n}}'\left(\cos \frac{\psi}{\alpha_n m}\right)^2&=&  \frac{2n^2}{\pi\psi}\bigg\{  \left(\frac{1}{2}-\frac{1}{2} \sin\left(h\psi+2\psi-\frac{\psi}{n}+O\left(\frac{\psi}{n^2}\right)\right)\right) 
	\\&&+\frac{3g(n)}{\psi h } \sin\left(\frac{h\psi}{2}+\psi-\frac{\psi}{2n}+O\left(\frac{\psi}{n^2}\right)-\frac{\pi}{4}\right)\cos\left(\psi-\frac{\psi}{2n}+O\left(\frac{\psi}{n^2}\right)-\frac{5}{4}\pi\right) \\&&-\frac{3}{4\psi h}g(n)^2 \sin\left(\frac{h}{2}+\psi-\frac{\psi}{2n}+O\left(\frac{\psi}{n^2}\right)-\frac{\pi}{4}\right)\cos\left(\psi-\frac{1}{2n}+O\left(\frac{1}{n^2}\right)-\frac{\pi}{4}\right) \\&&
	+ O\left(\frac{g(n)^2}{\psi}\right)+O\left(\frac{1}{\psi n}\right)+O\left(\frac{1}{\psi^2}\right) \bigg\}+O\left(\frac{\sqrt{\psi}}{n^2\sqrt{n}g(n)}\right).
\end{eqnarray*}
\end{corollary}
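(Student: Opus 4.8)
The plan is to obtain Corollary~\ref{gamma-primo-quadro} directly from Lemma~\ref{derivata-prima} by first squaring the asymptotic expansion of $\Gamma_{\alpha_n}'$ and then reducing the resulting trigonometric prefactor to its leading form, exactly as one passes from Lemma~\ref{Lemma1} to Corollary~\ref{gamma-quadro}. Writing the bracket in Lemma~\ref{derivata-prima} as $A+B_1+B_2+O(\tfrac{1}{n\psi}+\tfrac{g(n)^2}{\psi})$, with $A=\tfrac{\sin(h\psi/2)}{h\psi/2}\,\sin(\tfrac{h\psi}{2}+\psi-\tfrac{\psi}{2n}+O(\tfrac{\psi}{n^2})-\tfrac{\pi}{4})$ the leading term and $B_1,B_2$ the $O(g(n))$ and $O(g(n)^2)$ contributions, squaring gives $A^2+2AB_1+2AB_2+\cdots$. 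The essential simplification is the double-angle identity $\sin^2(X-\tfrac{\pi}{4})=\tfrac12-\tfrac12\sin(2X)$, which turns $A^2$ into $\big(\tfrac{\sin(h\psi/2)}{h\psi/2}\big)^2\big(\tfrac12-\tfrac12\sin(h\psi+2\psi-\tfrac{\psi}{n}+O(\tfrac{\psi}{n^2}))\big)$; this is precisely the first term inside the braces of the intermediate (unlabeled) corollary, and the cross terms reproduce the $\tfrac{3g(n)}{\psi h}$ and $-\tfrac{3g(n)^2}{4\psi h}$ summands.

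It then remains to reduce the common prefactor. First I would use $\sin\tfrac{\psi}{\alpha_n m}=2\sin\tfrac{\psi}{2\alpha_n m}\cos\tfrac{\psi}{2\alpha_n m}$ to rewrite $(\sin\tfrac{\psi}{\alpha_n m})^2(\sin\tfrac{\psi}{2\alpha_n m})^{-5}(\cos\tfrac{\psi}{2\alpha_n m})^{-3}=4(\sin\tfrac{\psi}{2\alpha_n m})^{-3}(\cos\tfrac{\psi}{2\alpha_n m})^{-1}$. Since $\alpha_n m=(1-g(n))(n+\tfrac12)\to\infty$, for fixed $\psi$ the angle $\tfrac{\psi}{2\alpha_n m}\to0$, so $\sin\tfrac{\psi}{2\alpha_n m}\sim\tfrac{\psi}{2\alpha_n m}$, $\cos\tfrac{\psi}{2\alpha_n m}\sim1$, and $\big(\tfrac{\sin(h\psi/2)}{h\psi/2}\big)^2\sim1$ (using $h\sim g(n)\to0$). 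This turns the prefactor times the $\big(\tfrac{\sin(h\psi/2)}{h\psi/2}\big)^2$ factor into $\tfrac{C_{\alpha_n}^4}{64\pi^3}n^3h^2\psi^2\cdot\tfrac{32(\alpha_n m)^3}{\psi^3}$. Substituting $C_{\alpha_n}^4=\tfrac{4\pi^2}{n^4g(n)^2}(1+O(g(n)))$ from (\ref{C2}), together with $(\alpha_n m)^3\sim n^3$ and $h^2\sim g(n)^2$ from (\ref{h}), the powers of $n$ and the factors $g(n)^2$ cancel and the constant collapses exactly to $\tfrac{2n^2}{\pi\psi}$, the prefactor claimed in Corollary~\ref{gamma-primo-quadro}. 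The same substitution applied to the cross terms leaves the $\tfrac{3g(n)}{\psi h}$ and $-\tfrac{3g(n)^2}{4\psi h}$ factors intact inside the braces.

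Finally I would carry out the bookkeeping of the error terms: the relative errors from $\sin\tfrac{\psi}{2\alpha_n m}\sim\tfrac{\psi}{2\alpha_n m}$ and from the $(1+O(g(n)))$ in $C_{\alpha_n}^4$ produce the $O(\tfrac{g(n)}{\psi})$ and $O(\tfrac{1}{ng(n)\psi})$ tails, while the $O(\tfrac{1}{n\psi}+\tfrac{g(n)^2}{\psi})$ term inside the bracket and the omitted $O(\psi/(n^3\sqrt{n}\,g(n)))$ remainder of Lemma~\ref{derivata-prima}, after multiplication by the prefactor, give the $O(\tfrac{1}{\psi n})$, $O(\tfrac{g(n)^2}{\psi})$, $O(\tfrac{1}{\psi^2})$ and the final $O(\tfrac{\sqrt{\psi}}{n^2\sqrt{n}\,g(n)})$ contributions.

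The main obstacle I expect is twofold. The delicate point is that the small-angle replacements are only legitimate pointwise in $\psi$ as $n\to\infty$, since $\tfrac{\psi}{2\alpha_n m}$ ranges up to $\pi/4$ and $h\psi$ up to order $ng(n)\to\infty$ over the whole interval $C<\psi<\alpha_n m\pi/2$; one must therefore read the asymptotic as a statement for each fixed (or slowly growing) $\psi$, keep the oscillatory factor $\tfrac{\sin(h\psi/2)}{h\psi/2}$ exact wherever $h\psi$ is not small, and defer the genuine control over the full range to the integration in Proposition~\ref{big theta}. The second, more mechanical, difficulty is keeping track of which of the many $O$-terms (from $C_{\alpha_n}^2$, from $h\sim g(n)$, from the two Hilb-type remainders, and from the cross terms $2AB_1,2AB_2,B_1^2$) actually dominate, so as to certify that they collapse precisely into the error terms displayed in the statement.
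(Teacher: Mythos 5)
Your proposal is correct and follows essentially the same route as the paper: the corollary is obtained by squaring the expansion of Lemma~\ref{derivata-prima} (using $\sin^2(X-\tfrac{\pi}{4})=\tfrac12-\tfrac12\sin(2X)$ for the leading term, the cross terms giving the $\tfrac{3g(n)}{\psi h}$ and $-\tfrac{3g(n)^2}{4\psi h}$ contributions), and then reducing the prefactor via $\sin x\sim x$ together with $C_{\alpha_n}^2=\tfrac{4\pi}{2n^2g(n)}(1+O(g(n)))$, $h\sim g(n)$ and $\alpha_n m\sim n$ to get $\tfrac{2n^2}{\pi\psi}$ --- exactly the two-step passage the paper makes through its unlabeled intermediate corollary. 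Your closing caveat about the factor $\bigl(\tfrac{\sin(h\psi/2)}{h\psi/2}\bigr)^2$ and the range of validity of the small-angle replacements is a fair observation, but it concerns the paper's own statement rather than a defect of your argument.
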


\begin{proof}[\bf{Proof of Lemma \ref{derivata-prima}}]
	
	From (\ref{Gamma}), we get
	
	$${\Gamma}_{\alpha_{n}}^\prime(\cos \theta)=C_{\alpha_{n}}^2 \bigg[ \frac{n+1}{4\pi} \frac{d}{d\theta}P_n^{(1,0)}(\cos \theta(x,y))-\frac{n\alpha_{n}}{4\pi} \frac{d}{d\theta}P_{n\alpha_{n}-1}^{(1,0)} (\cos \theta(x,y)) \bigg].$$
	
		From \cite{szego} (4.5.5) we also know that
		\begin{equation}\label{formula}
\frac{d}{dx}P_n^{a,b}(x)=\frac{1}{2}(a+b+n+1) P_{n-1}^{(a+1,b+1)}(x)
		\end{equation}
	
	and hence 
	
		$${\Gamma}_{\alpha_{n}}^\prime(\cos \theta)=\frac{C_{\alpha_{n}}^2}{8\pi} (-\sin \theta) \bigg[ (n+1)(n+2) P_{n-1}^{(2,1)}(\cos \theta)-n\alpha_{n} (n\alpha_{n}+1)P_{n\alpha_{n}-2}^{(2,1)} (\cos \theta) \bigg].$$
	
Now we exploit (\ref{CD}) to get 
	
	\begin{eqnarray*}
{\Gamma}_{\alpha_{n}}^\prime (\cos \theta)&=& C_{\alpha_{n}} ^2  (-\sin\theta) \bigg\{ (\sin \theta/2)^{-5/2} (\cos \theta/2)^{-3/2} \frac{1}{8\pi \sqrt{\pi} } \bigg[  \frac{(n+1)(n+2)}{\sqrt{n-1}} \cos\left((n+1)\theta-\frac{5}{4}\pi\right)\\&&-\frac{n\alpha_n(n\alpha_n+1)}{\sqrt{n\alpha_n-2}} \cos \left(n\alpha_n \theta-\frac{5}{4}\pi\right)\bigg] +O(\sqrt{1/n}) \bigg\}.
	\end{eqnarray*}
	
	We note that
	$$\frac{(n+1)(n+2)}{\sqrt{n-1}}= n\sqrt{n}\bigg[1+\frac{7}{2n}+\frac{31}{8n^2}+O\left(\frac{1}{n^3}\right)\bigg]$$
	and
	
	$$\frac{n^2\alpha_n(\alpha_n+1/n)}{\sqrt{n} \sqrt{\alpha_n-2/n}}=n\sqrt{n} \left[1+\frac{2}{n}+\frac{5}{2n^2}-\frac{3}{2}g(n)-\frac{g(n)}{n}+\frac{3}{8}g(n)^2+O(g(n)^3)\right].$$
	
Replacing these two expressions above, we find that
	
	\begin{eqnarray*}
{\Gamma}_{\alpha_{n}}^\prime (\cos \theta)&=&C_{\alpha_{n}} ^2 (-\sin\theta) \bigg\{ (\sin \theta/2)^{-5/2} (\cos \theta/2)^{-3/2} \frac{1}{8\pi \sqrt{\pi} } n\sqrt{n} \\&&\quad \times \bigg[ \cos\left((n+1)\theta-\frac{5}{4}\pi\right)-\cos \left (n\alpha_n\theta-\frac{5}{4}\pi\right) +\frac{3}{2}g(n) \cos\left(n\alpha_n\theta-\frac{5}{4}\pi\right)\\&&\quad +\frac{7}{2n} \cos\left((n+1)\theta-\frac{5}{4}\pi\right)-\frac{2}{n} \cos\left(n\alpha_n\theta-\frac{5}{4}\pi\right)-\frac{3}{8}g(n)^2\cos\left(n\alpha_n\theta-\frac{5}{4}\pi\right)   \\&&\quad +O\left(\frac{g(n)}{n} +g(n)^3\right)\bigg]+O\left(\frac{1}{\sqrt{n}}\right)\bigg\}.
	\end{eqnarray*}

	Changing $\theta=\frac{\psi}{\alpha_nm}$, we obtain
	
	\begin{eqnarray*}
{\Gamma}_{\alpha_{n}}^\prime \left(\cos \frac{\psi}{\alpha_{n}m}\right)&=&C_{\alpha_{n}}^2\frac{n\sqrt{n}}{8\pi \sqrt{\pi} } \left(-\sin\frac{\psi}{\alpha_{n}m}\right) \left(\sin \frac{\psi}{2\alpha_{n}m}\right)^{-5/2} \left(\cos \frac{\psi}{2\alpha_{n}m}\right)^{-3/2}  \psi h \\&& \times \bigg[ 2 \frac{\sin \frac{h\psi}{2}}{h\psi} \sin\left(\frac{\psi h}{2}+\psi-\frac{\psi}{2n}+O\left(\frac{\psi}{n^2}\right)-\frac{\pi}{4}\right)+\frac{3g(n)}{2\psi h}\cos \left(\psi-\frac{\psi}{2n}+O\left(\frac{\psi}{n^2}\right)-\frac{5}{4}\pi\right) \\&&-\frac{3}{8}g(n)^2\frac{1}{\psi h} \cos\left(\psi-\frac{\psi}{2n}+O\left(\frac{\psi}{n^2}\right)-\frac{5}{4}\pi\right) + O\left(\frac{1}{n\psi}+\frac{g(n)^2}{\psi}\right)\bigg] \quad +O\left(\frac{\psi}{n^3\sqrt{n}g(n)}\right).
	\end{eqnarray*}
\end{proof}

\subsection{Asymptotic for the second derivative of the covariance function}

\begin{lemma}\label{derivata-seconda}
For $C < \psi < \alpha_n m\pi/2$, with $C>0$,
as $n\to \infty$,	we have
	
\begin{eqnarray*}
	{\Gamma}_{\alpha_{n}}^{''}\left(\cos \frac{\psi}{\alpha_{n}m}\right)&=& \frac{C_{\alpha_{n}}^2}{8\pi \sqrt{\pi}}  \bigg\{ \left(\sin \frac{\psi}{2\alpha_n m}\right)^{-7/2}\left(\cos \frac{\psi}{\alpha_{n}m}\right)^{-5/2} \sin^2 \frac{\psi}{\alpha_n m} \frac{n^2\sqrt{n}}{2}\bigg[-2 \sin \frac{h\psi}{2}\\&& \quad  \sin\left(\frac{h\psi}{2}+\psi-\frac{\psi}{2n}+O\left(\frac{\psi}{n^2}\right)+\frac{\pi}{4}\right) +\frac{5}{2}g(n)\cos\left(\psi-\frac{\psi}{2n}+O\left(\frac{\psi}{n^2}\right)-\frac{7}{4}\pi\right)\\&&
	+O\left(\frac{1}{n}+g(n)^2\right)\bigg]-\sin\left(\psi/2\alpha_{n}m\right)^{-5/2} \left(\cos \psi/2\alpha_{n}m\right)^{-3/2} \cos \frac{\psi}{\alpha_n m}\\
	&&\quad \times n\sqrt{n}\bigg[2\sin \frac{h\psi}{2} \sin\left(\frac{h\psi}{2}+\psi-\frac{\psi}{2n}+O\left(\frac{\psi}{n^2}\right)-\frac{\pi}{4}\right)+\frac{3}{2}g(n) \\&& \quad \times\cos \left(\psi-\frac{\psi}{2n}+O\left(\frac{\psi}{n^2}\right)-\frac{5}{4}\pi\right)-\frac{3}{8}g(n)^2\cos\left(\psi-\frac{\psi}{2n}+O\left(\frac{\psi}{n^2}\right)-\frac{5}{4}\pi\right)\\&& +O\left(\frac{1}{n}\right)+O\left(\frac{g(n)}{n}+g(n)^3\right)\bigg]+O\left(n\sqrt{n}+\psi^2 \sqrt{n}\right) \bigg\}.
\end{eqnarray*}

\end{lemma}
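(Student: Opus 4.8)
The plan is to obtain $\Gamma_{\alpha_{n}}''$ by differentiating once more the closed form for $\Gamma_{\alpha_{n}}'$ found in the proof of Lemma \ref{derivata-prima}, and then to insert the resulting Jacobi polynomials into Szeg\H{o}'s asymptotic (\ref{CD}), exactly as was done for $\Gamma_{\alpha_{n}}$ and $\Gamma_{\alpha_{n}}'$. Starting from
\[
\Gamma_{\alpha_{n}}'(\cos\theta)=\frac{C_{\alpha_{n}}^2}{8\pi}(-\sin\theta)\Big[(n+1)(n+2)P_{n-1}^{(2,1)}(\cos\theta)-n\alpha_{n}(n\alpha_{n}+1)P_{n\alpha_{n}-2}^{(2,1)}(\cos\theta)\Big],
\]
I would differentiate in $\theta$. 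The product rule splits $\Gamma_{\alpha_{n}}''$ into two structurally distinct blocks: a $(-\cos\theta)$ block still carrying the $P^{(2,1)}$ polynomials, and a $\sin^2\theta$ block in which each $P^{(2,1)}$ has been differentiated through (\ref{formula}) into $\tfrac12(a+b+n+1)P^{(3,2)}$, lowering the degrees of the differentiated polynomials to $n-2$ and $n\alpha_{n}-3$ and promoting the coefficients to $\tfrac{(n+1)(n+2)(n+3)}{2}$ and $\tfrac{n\alpha_{n}(n\alpha_{n}+1)(n\alpha_{n}+2)}{2}$. This already reproduces the two-block shape of the statement, the $\sin^2\theta$ block scaling like $n^2\sqrt n/2$ and the $\cos\theta$ block like $n\sqrt n$.

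Next I would apply (\ref{CD}) separately to each family. For $P^{(2,1)}$ one has $\alpha=2,\beta=1$, hence the weight $\pi^{-1/2}(\sin\tfrac\theta2)^{-5/2}(\cos\tfrac\theta2)^{-3/2}$ and phase shift $\gamma=-\tfrac54\pi$; for $P^{(3,2)}$ one has $\alpha=3,\beta=2$, hence the weight $\pi^{-1/2}(\sin\tfrac\theta2)^{-7/2}(\cos\tfrac\theta2)^{-5/2}$ and $\gamma=-\tfrac74\pi$. These are exactly the weight exponents and the phase shifts $-\tfrac54\pi$, $-\tfrac74\pi$ displayed in the two blocks of the statement. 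Moreover, in each family lowering the degree by one while raising $\alpha+\beta$ by two leaves the Szeg\H{o} frequency $N$ unchanged, so both blocks produce exactly the two phases $(n+1)\theta$ and $n\alpha_{n}\theta$. I would then expand the four coefficient ratios, reusing $\tfrac{(n+1)(n+2)}{\sqrt{n-1}}=n\sqrt n\,[1+\tfrac{7}{2n}+O(n^{-2})]$ and its $\alpha_{n}$-analogue from Lemma \ref{derivata-prima}, and expanding the two new cubic ratios $\tfrac{(n+1)(n+2)(n+3)}{\sqrt{n-2}}$ and $\tfrac{n\alpha_{n}(n\alpha_{n}+1)(n\alpha_{n}+2)}{\sqrt{n\alpha_{n}-3}}$ simultaneously in powers of $1/n$ and of $g(n)=1-\alpha_{n}$, to the order dictated by the error budget.

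Finally I would set $\theta=\psi/(\alpha_{n}m)$ and collapse the cosine differences. Using the two phase identities already established in the proof of Lemma \ref{Lemma1}, namely $n\psi/m=\psi-\tfrac{\psi}{2n}+O(\psi/n^2)$ and $(n+1)\psi/(\alpha_{n}m)=\psi-\tfrac{\psi}{2n}+O(\psi/n^2)+h\psi$ with $h$ as in (\ref{h}), each difference $\cos((n+1)\theta-\gamma)-\cos(n\alpha_{n}\theta-\gamma)$ becomes $-2\sin(\,\cdots+\tfrac{h\psi}{2})\sin\tfrac{h\psi}{2}$, which furnishes the $\sin\tfrac{h\psi}{2}$ factors of both blocks, while the $g(n)$-proportional remainders of the coefficient expansions supply the single-cosine $g(n)$- and $g(n)^2$-terms. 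Collecting the two blocks against the common prefactor $C_{\alpha_{n}}^2/(8\pi\sqrt\pi)$ then yields the claimed expansion.

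The main obstacle I anticipate is the bookkeeping of the error terms rather than any single conceptual step. One must expand the cubic coefficient ratios jointly in $1/n$ and in $g(n)$ and retain only what survives against the stated remainders; this is delicate because $g(n)\to0$ while $ng(n)\to\infty$, so the relative orders of $1/n$, $g(n)$ and $1/(ng(n))$ must be tracked consistently across both blocks. A secondary subtlety is checking that the remainder $O(n\sqrt n+\psi^2\sqrt n)$ correctly absorbs the $O(n^{-1/2})$ Szeg\H{o} error of each Jacobi polynomial once it has been multiplied by the $\sin^2\theta$ or $\cos\theta$ prefactor and by the $n^{5/2}$, $n^{3/2}$ coefficient ratios.
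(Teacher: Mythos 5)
Your plan coincides with the paper's own proof in every essential respect: the paper likewise reduces $\Gamma_{\alpha_n}''$ to the two blocks $\tfrac{(n+1)(n+2)(n+3)}{2}P_{n-2}^{(3,2)}\sin^2\theta - (n+1)(n+2)P_{n-1}^{(2,1)}\cos\theta$ (and the $\alpha_n$-analogues) via the derivative identity (\ref{formula}), applies Szeg\H{o}'s asymptotic (\ref{CD}) with the weights $(\sin\tfrac\theta2)^{-7/2}(\cos\tfrac\theta2)^{-5/2}$, $(\sin\tfrac\theta2)^{-5/2}(\cos\tfrac\theta2)^{-3/2}$ and phases $-\tfrac74\pi$, $-\tfrac54\pi$, expands the same four coefficient ratios in $1/n$ and $g(n)$, and collapses the cosine differences into $\sin\tfrac{h\psi}{2}$ factors after substituting $\theta=\psi/(\alpha_n m)$. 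The only cosmetic difference is that the paper writes down the second $\theta$-derivative of a single Jacobi polynomial first and then forms the difference, whereas you differentiate the already-assembled expression for $\Gamma_{\alpha_n}'$; the computations are identical.
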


\begin{corollary}
For $C < \psi < \alpha_n m\pi/2$, $C>0$,
as $n\to \infty$,		
	
	\begin{eqnarray*}
	\Gamma_{\alpha_{n}}''\left(\cos \frac{\psi}{\alpha_n m}\right) &=& \sqrt{\frac{2}{\pi\psi} }n^2 \frac{h}{g(n)}\bigg\{ -\sin\left(\frac{h\psi}{2}+\psi-\frac{\psi}{2n}+O\left(\frac{\psi}{n^2}\right)+\frac{\pi}{4}\right)+\frac{5g(n)}{2 h \psi}\\&& \times \cos\left(\psi-\frac{\psi}{2n}+O\left(\frac{\psi}{n^2}\right)-\frac{7}{4}\pi\right)
 -\frac{1}{\psi} \sin\left(\frac{h\psi}{2}+\psi-\frac{\psi}{2n}+O\left(\frac{\psi}{n^2}\right)-\frac{\pi}{4}\right) \\&&+O\left(\frac{1}{n\psi g(n)}\right)+O\left(\frac{g(n)}{\psi}\right) \bigg\}+O\left(\frac{n^2}{g(n)n\sqrt{\psi}}\right) .
\end{eqnarray*}
\end{corollary}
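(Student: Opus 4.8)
The plan is to read off this corollary from the closed form in Lemma~\ref{derivata-seconda} by exactly the asymptotic reduction already used to pass from Lemma~\ref{Lemma1} to Corollary~\ref{gamma}, and from the covariance square to Corollary~\ref{gamma-quadro}. No new probabilistic ingredient enters: one simply inserts the small-angle behaviour of the trigonometric prefactors together with the explicit size of $C_{\alpha_n}^2$, collects the powers of $n$, and reorganises the two summands of Lemma~\ref{derivata-seconda} into the three terms displayed inside the braces.

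First I would record that, since $C<\psi<\alpha_n m\pi/2$ forces $\psi/(\alpha_n m)\to0$, one has
\[
\sin\frac{\psi}{2\alpha_n m}=\frac{\psi}{2\alpha_n m}\big(1+O((\psi/n)^2)\big),\qquad \sin\frac{\psi}{\alpha_n m}=\frac{\psi}{\alpha_n m}\big(1+O((\psi/n)^2)\big),
\]
together with $\cos(\psi/\alpha_n m)=1+O((\psi/n)^2)$ and $\alpha_n m=n\big(1+O(g(n))+O(1/n)\big)$, the last because $\alpha_n=1-g(n)$ and $m=n+1/2$. Feeding these into Lemma~\ref{derivata-seconda}, the prefactor $(\sin\frac{\psi}{2\alpha_n m})^{-7/2}(\cos\frac{\psi}{\alpha_n m})^{-5/2}\sin^2\frac{\psi}{\alpha_n m}\,\tfrac{n^2\sqrt n}{2}$ of the dominant (first) summand reduces to $2^{5/2}n^4\psi^{-3/2}(1+\dots)$, and the prefactor $(\sin\frac{\psi}{2\alpha_n m})^{-5/2}(\cos\frac{\psi}{2\alpha_n m})^{-3/2}\cos\frac{\psi}{\alpha_n m}\,n\sqrt n$ of the second summand reduces to $2^{5/2}n^4\psi^{-5/2}(1+\dots)$. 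Multiplying by $\frac{1}{8\pi\sqrt\pi}$ and substituting $C_{\alpha_n}^2=\frac{2\pi}{n^2 g(n)}\big(1+O(g(n))\big)$ from (\ref{C2}) turns these into $\frac{\sqrt2}{\sqrt\pi}\,\frac{n^2}{g(n)}\psi^{-3/2}$ and $\frac{\sqrt2}{\sqrt\pi}\,\frac{n^2}{g(n)}\psi^{-5/2}$. The common factor $\sqrt{\tfrac{2}{\pi\psi}}\,n^2\,\tfrac{h}{g(n)}$ then materialises as soon as the factor $\sin\frac{h\psi}{2}$ sitting inside each bracket is linearised to $\frac{h\psi}{2}$: the extra $h$ produces the front factor $h/g(n)$ and the extra $\psi$ raises $\psi^{-3/2}$ to $\psi^{-1/2}$ in the first summand and $\psi^{-5/2}$ to $\psi^{-3/2}$ in the second. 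Tracking the bracket entries, the leading $-2\sin\frac{h\psi}{2}\sin(\cdots+\frac\pi4)$ of the first summand becomes $-\sin\big(\frac{h\psi}{2}+\psi-\frac{\psi}{2n}+O(\psi/n^2)+\frac\pi4\big)$; its next term $\frac52 g(n)\cos(\cdots-\frac74\pi)$, carrying no $\sin\frac{h\psi}{2}$, keeps the bare scale $\psi^{-3/2}$ and, written against the pulled-out $h/g(n)$, appears as $\frac{5g(n)}{2h\psi}\cos(\cdots-\frac74\pi)$; finally the leading $2\sin\frac{h\psi}{2}\sin(\cdots-\frac\pi4)$ of the second summand, which carries the extra $\psi^{-1}$, yields $-\frac1\psi\sin(\cdots-\frac\pi4)$. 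The phases $-\tfrac74\pi$, $-\tfrac\pi4$ and the constant $\tfrac52$ are inherited verbatim from Lemma~\ref{derivata-seconda}.

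The step deserving the most care, and the genuine obstacle, is the control of $\sin\frac{h\psi}{2}$. By (\ref{h}) one has $h\sim g(n)$, and the standing hypothesis $ng(n)\to\infty$ lets $\psi$ reach order $n$, so $h\psi$ need \emph{not} be small and the linearisation $\sin\frac{h\psi}{2}\approx\frac{h\psi}{2}$ cannot be justified by a uniform Taylor expansion; what is really being used is that $\frac{\sin(h\psi/2)}{h\psi/2}$ stays bounded by $1$, so the displayed expression is the honest leading term while the sinc discrepancy is swept into the remainder, exactly as in Corollaries~\ref{gamma} and~\ref{gamma-quadro}. The remaining work is then bookkeeping: one checks that the bracket corrections $O(1/n)+O(g(n)^2)$ of each summand, the $O(g(n))+O(1/n)$ corrections hidden in $\alpha_n m$ and in $C_{\alpha_n}^2$, and the external remainder $\frac{C_{\alpha_n}^2}{8\pi\sqrt\pi}\,O(n\sqrt n+\psi^2\sqrt n)$ of Lemma~\ref{derivata-seconda} all reorganise into the stated $O\big(\frac{1}{n\psi g(n)}\big)+O\big(\frac{g(n)}{\psi}\big)$ inside the braces and the external $O\big(\frac{n^2}{g(n)n\sqrt\psi}\big)$. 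The only subtlety in this last part is to keep the ratio $h/g(n)\to1$ explicit rather than setting it to $1$, so that the genuine $g(n)$-dependence of the subleading terms is preserved.
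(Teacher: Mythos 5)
Your derivation is exactly the route the paper takes: the corollary is obtained from Lemma~\ref{derivata-seconda} by the same recipe announced before Corollary~\ref{gamma-quadro} (replace the trigonometric prefactors by their small-angle values, insert $C_{\alpha_n}^2=\frac{2\pi}{n^2g(n)}(1+O(g(n)))$, factor $\sin\frac{h\psi}{2}$ as $\frac{h\psi}{2}$ times a bounded sinc, and pull out $\sqrt{2/(\pi\psi)}\,n^2h/g(n)$), and your bookkeeping of the powers of $n$ and $\psi$, the phases, and the error terms all checks out. Your explicit caveat about the non-uniform linearisation of $\sin\frac{h\psi}{2}$ when $h\psi$ is of order $ng(n)$ is, if anything, more careful than the paper's implicit treatment, and is the correct way to read the statement.
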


\begin{proof}[\bf{Proof of Lemma \ref{derivata-seconda}}]
	
We have  that
	$$\bar{\Gamma}_{\alpha_{n}}^{''}(\cos \theta)=C_{\alpha_{n}}^2\bigg[ \frac{n+1}{4\pi} P_n^{''}(\cos \theta(x,y))-\frac{n\alpha_{n,\beta}}{4\pi} P_{n\alpha_{n,\beta}-1}^{''} (\cos \theta(x,y)) \bigg].$$
	
We exploit again (\ref{formula}) to derive

	\[
	P_n^{''}(\cos \theta)= \frac{n+2}{2} \bigg\{ \frac{1}{2} (3+n) P_{n-2}^{(3,2)}(\cos \theta) \sin^2\theta-P_{n-1}^{(2,1)}(\cos \theta)\cos \theta \bigg\}
	.\]

Then we get

\begin{eqnarray*}
\Gamma_{\alpha_{n}}(\cos \theta)''&=& C_{\alpha_{n}}^2\bigg\{ \frac{(n+1)(n+2)}{8\pi}\bigg[\frac{(n+3)}{2}P_{n-2}^{(3,2)}(\cos \theta) \sin^2\theta-P_{n-1}^{(2,1)}(\cos\theta) \cos \theta\bigg]\\
&& -\frac{n\alpha_{n}(n\alpha_{n}+1)}{8\pi} \bigg[ \frac{n\alpha_{n}+2}{2}P_{n\alpha_{n}-3}^{(3,2)}(\cos \theta) \sin^2 \theta-P_{n\alpha_{n}-2}^{(2,1)}(\cos\theta) \cos \theta\bigg] \bigg\}.
\end{eqnarray*}

From (\ref{CD}) it follows that
	
\begin{eqnarray*}
\Gamma''_{\alpha_{n}}(\cos \theta)&=& 
\frac{C_{\alpha_{n}}^2}{8\pi\sqrt{\pi}} \bigg\{ \frac{(n+1)(n+2)(n+3)}{2 \sqrt{n-2}} \bigg[ (\sin \theta/2)^{-3-1/2}(\cos \theta/2)^{-2-1/2} \cos \left( (n+1)\theta-\frac{7}{4}\pi\right) +O(1)\bigg]\\&& \times \sin^2\theta - \frac{n\alpha_{n}(n\alpha_{n}+1)(n\alpha_{n}+2)}{2\sqrt{n\alpha_{n}-3}} \bigg[ (\sin\theta/2)^{(-3-1/2)}(\cos\theta/2)^{-2-1/2} \cos\left(n\alpha_{n}\theta-\frac{7}{4}\pi\right)+O(1)\bigg]\\&& \times  \sin^2\theta + \frac{n\alpha_{n}(n\alpha_{n}+1)}{\sqrt{n\alpha_{n}-2}} \bigg[ (\sin\theta/2)^{(-2-1/2)}(\cos\theta/2)^{-1-1/2} \cos\left(n\alpha_{n}\theta-\frac{5}{4}\pi\right)+O(1) \bigg]\cos \theta \\
&& -\frac{(n+1)(n+2)}{\sqrt{n-1}} \left[ (\sin\theta/2)^{(-2-1/2)}(\cos\theta/2)^{-1-1/2} \cos\left((n+1)\theta-\frac{5}{4}\pi\right)+O(1)\right]\cos \theta
\bigg\}\\
&=&
\frac{C_{\alpha_{n}}^2}{8\pi\sqrt{\pi}} \bigg\{ (\sin \theta/2)^{-7/2}(\cos \theta/2)^{-5/2} \sin^2\theta\frac{1}{2} \\&& \times \bigg[ \frac{(n+1)(n+2)(n+3)}{ \sqrt{n-2}} \cos \left( (n+1)\theta-\frac{7}{4}\pi\right) - \frac{n\alpha_{n}(n\alpha_{n}+1)(n\alpha_{n}+2)}{\sqrt{n\alpha_{n}-3}}   \cos\left(n\alpha_{n}\theta-\frac{7}{4}\pi\right)\bigg]\\
&& + O(\sin^2\theta n^2\sqrt{n}) + (\sin\theta/2)^{(-5/2)}(\cos\theta/2)^{-3/2}\cos\theta \\&&
\times \bigg[ \frac{n\alpha_{n}(n\alpha_{n}+1)}{\sqrt{n\alpha_{n}-2}}  \cos\left(n\alpha_{n}\theta-\frac{5}{4}\pi\right) -\frac{(n+1)(n+2)}{\sqrt{n-1}} \cos\left((n+1)\theta-\frac{5}{4}\pi\right)\bigg]+O(n\sqrt{n})
\bigg\}.
\end{eqnarray*}

Now we have that
\begin{eqnarray*}
\frac{n\alpha_{n}(n\alpha_{n}+1)}{\sqrt{n\alpha_{n}-2}} &=& \frac{n^2}{\sqrt{n}} \left( 1-\frac{3g(n)}{2}+\frac{2}{n}-\frac{g(n)}{n}+\frac{3}{8}g(n)^2+\frac{5}{2n^2}+O(g(n)^3)\right) ,\\
\frac{(n+1)(n+2)}{\sqrt{n-1}} &=&\frac{n^2}{\sqrt{n}} \left(1+\frac{7}{2n}+\frac{31}{n^2}+O\left(\frac{1}{n^3}\right)\right) , \\
\frac{n\alpha_{n}(n\alpha_{n}+1)(n\alpha_{n}+2)}{2\sqrt{n\alpha_{n}-3}}&= &\frac{n^3}{\sqrt{n}} \left(1-\frac{5}{2}g(n)+\frac{9}{2n}-\frac{27g(n)}{4n}+\frac{15}{8}g(n)^2+\frac{79}{8n^2}+O(g(n)^3)\right) , \\
	\frac{(n+1)(n+2)(n+3)}{2 \sqrt{n-2}}&=&\frac{n^3}{\sqrt{n}} \left(1+\frac{7}{n}+\frac{37}{2n^2}+O\left(\frac{1}{n^3}\right)\right) .
\end{eqnarray*}
	We change variable $\theta=\frac{\psi}{\alpha_{n}m}$ and we conclude that

\begin{eqnarray*}
{\Gamma}_{\alpha_{n}}^{''}\left(\cos \frac{\psi}{\alpha_{n}m}\right)&=& \frac{C_{\alpha_{n}}^2}{8\pi \sqrt{\pi}}  \bigg\{ \left(\sin \frac{\psi}{2\alpha_n m}\right)^{-7/2}\left(\cos \frac{\psi}{\alpha_{n}m}\right)^{-5/2} \left(\sin  \frac{\psi}{\alpha_n m}\right)^2 \frac{n^2\sqrt{n}}{2}\bigg[-2 \sin \frac{h\psi}{2} \\&& \sin\left(\frac{h\psi}{2}+\psi-\frac{\psi}{2n}+O\left(\frac{\psi}{n^2}\right)+\frac{\pi}{4}\right) +\frac{5}{2}g(n)\cos\left(\psi-\frac{\psi}{2n}+O\left(\frac{\psi}{n^2}\right)-\frac{7}{4}\pi\right)\\&&+O\left(\frac{1}{n}+g(n)^2\right)\bigg]-\sin(\frac{\psi}{2\alpha_{n}m})^{-5/2} \left(\cos \frac{\psi}{2\alpha_{n}m}\right)^{-3/2} \cos \frac{\psi}{\alpha_n m}\\
&&\quad \times n\sqrt{n}\bigg[2\sin \frac{h\psi}{2} \sin\left(\frac{h\psi}{2}+\psi-\frac{\psi}{2n}+O\left(\frac{\psi}{n^2}\right)-\frac{\pi}{4}\right)\\&&+\frac{3}{2}g(n)\cos (\psi-\frac{\psi}{2n}+O\left(\frac{\psi}{n^2}\right)-\frac{5}{4}\pi)-\frac{3}{8}g(n)^2\cos\left(\psi-\frac{\psi}{2n}+O\left(\frac{\psi}{n^2}\right)-\frac{5}{4}\pi\right) \\&&+O\left(\frac{1}{n}\right)+O\left(\frac{g(n)}{n}+g(n)^3\right)\bigg]+O\left(n\sqrt{n}+\psi^2 \sqrt{n}\right) \bigg\}.
\end{eqnarray*}
\end{proof}

\section{Technical results: Asymptotics for the Two-point correlation function}\label{technicalsection}

\subsection{Technical details of the proof of Lemma \ref{asympK}}
In this section we derive the asymptotic expression of all terms appearing in (\ref{K}) of the two-point correlation function $K_n(\psi)$ for large values of $\psi$ uniformly w.r.t. $n$ and $\psi$. 

\begin{proposition}
	For $C < \psi < \alpha_n m\pi/2$, $C>0$,
	as $n\to \infty$,	
	\begin{eqnarray*}
	a&=& -\frac{1}{\pi \psi} \bigg[1-\sin\left(h\psi+2\psi-\frac{\psi}{n}+O\left(\frac{\psi}{n^2}\right)\right)+\frac{6}{\psi} \frac{g(n)}{h}  \sin\left(\frac{h\psi}{2}+\psi-\frac{\psi h}{2n}+O\left(\frac{\psi}{n^2}\right)-\frac{\pi}{4}\right)\\&&\cos\left(\psi-\frac{\psi}{2n}+O\left(\frac{\psi}{n^2}\right)-\frac{5}{4}\pi\right) +\frac{1}{2\pi\psi}+\frac{\cos\left(2h\psi +4\psi-\frac{2\psi}{n}+O\left(\frac{\psi}{n^2}\right)\right)}{2\pi\psi}\\&&+O\left(\frac{1}{\psi^2}+\frac{1}{g(n) n\psi}+\frac{g(n)}{\psi}\right)\bigg]
	\end{eqnarray*}

\end{proposition}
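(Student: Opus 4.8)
The plan is to read $a$ off directly from its definition and then to insert the asymptotics for $\Gamma_{\alpha_n}'^2$ and $\Gamma_{\alpha_n}^2$ established in Corollaries \ref{gamma-primo-quadro} and \ref{gamma-quadro}. First I would note that the bracket that is squared inside $\tilde a_{\alpha_n}$ is, up to the factor $C_{\alpha_n}^2$, nothing but the $\theta$-derivative of the covariance: since $\Gamma_{\alpha_n}'(\cos\theta)=-C_{\alpha_n}^2\sum_{\ell=\alpha_n n}^n\frac{2\ell+1}{4\pi}P_\ell'(\cos\theta)\sin\theta$, one has
$$\tilde a_{\alpha_n}(\theta)=-\frac{\Gamma_{\alpha_n}'(\cos\theta)^2}{1-\Gamma_{\alpha_n}(\cos\theta)^2}.$$
Substituting this into the definition of $a$ and rescaling $\theta=\psi/(\alpha_n m)$ (the conditioning decreases the gradient variance, so that $a<0$), I obtain
$$a=-\frac{1}{C_{\alpha_n}^2\sum_{\ell=\alpha_n n}^n\ell(\ell+1)}\cdot\frac{\Gamma_{\alpha_n}'\!\left(\cos\frac{\psi}{\alpha_n m}\right)^2}{1-\Gamma_{\alpha_n}\!\left(\cos\frac{\psi}{\alpha_n m}\right)^2}.$$
The normalizing constant plays only the role of absorbing the factor $n^2$ carried by $\Gamma_{\alpha_n}'^2$ in Corollary \ref{gamma-primo-quadro}, leaving a quantity of order $\psi^{-1}$.

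Next I would expand the denominator geometrically,
$$\frac{1}{1-\Gamma_{\alpha_n}^2}=1+\Gamma_{\alpha_n}^2+O\!\left(\Gamma_{\alpha_n}^4\right),$$
which is justified on $C<\psi<(\pi/2)\alpha_n m$ because $\Gamma_{\alpha_n}^2=O(\psi^{-1})$ by Corollary \ref{gamma-quadro}. The leading contribution comes from $\Gamma_{\alpha_n}'^2$ alone: its main factor $\tfrac12(1-\sin\phi)$, with $\phi=h\psi+2\psi-\frac{\psi}{n}+O(\frac{\psi}{n^2})$, yields after normalization the term $-\frac{1}{\pi\psi}(1-\sin\phi)$, while the $g(n)$-weighted cross term of Corollary \ref{gamma-primo-quadro} produces the summand $\frac{6}{\psi}\frac{g(n)}{h}\sin(\cdots)\cos(\cdots)$. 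The genuinely new ingredient is the product $\Gamma_{\alpha_n}'^2\cdot\Gamma_{\alpha_n}^2$ coming from the second term of the geometric series.

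The key computation here is that the leading factor of $\Gamma_{\alpha_n}'^2$ is proportional to $\tfrac12(1-\sin\phi)$ and that of $\Gamma_{\alpha_n}^2$ to $\tfrac12(1+\sin\phi)$ with the \emph{same} phase $\phi$; multiplying and using $(1-\sin\phi)(1+\sin\phi)=1-\sin^2\phi=\tfrac12(1+\cos2\phi)$ converts the doubled phase $2\phi=2h\psi+4\psi-\frac{2\psi}{n}+O(\frac{\psi}{n^2})$ into exactly the summand $\frac{1}{2\pi\psi}\big(1+\cos(2h\psi+4\psi-\frac{2\psi}{n}+O(\cdots))\big)$ of the statement. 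Collecting the pieces and multiplying the $\psi^{-1}$ prefactor into the residual terms gives the claimed expansion, the errors $O(\psi^{-2}+\frac{1}{g(n)n\psi}+\frac{g(n)}{\psi})$ being inherited from those of Corollaries \ref{gamma-primo-quadro} and \ref{gamma-quadro} once divided by the normalizing constant. I expect the main obstacle to be precisely this bookkeeping of phases and remainder orders through the product: confirming that the two oscillatory expansions align to produce the single doubled-frequency cosine rather than spurious lower-order oscillations, and that dividing by $C_{\alpha_n}^2\sum_\ell\ell(\ell+1)$ indeed cancels the $n^2$ in $\Gamma_{\alpha_n}'^2$ so that no stray power of $n$ survives.
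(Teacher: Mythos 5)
Your proposal follows essentially the same route as the paper: it writes $a=-\tfrac{1}{2D_{\alpha_n}}(\Gamma_{\alpha_n}')^2\bigl(1+\Gamma_{\alpha_n}^2+O(\Gamma_{\alpha_n}^4)\bigr)$, inserts Corollaries \ref{gamma-primo-quadro} and \ref{gamma-quadro}, and your identity $(1-\sin\phi)(1+\sin\phi)=\tfrac12(1+\cos 2\phi)$ with the common phase $\phi=h\psi+2\psi-\psi/n+O(\psi/n^2)$ is exactly how the paper obtains the $\tfrac{1}{2\pi\psi}\bigl(1+\cos 2\phi\bigr)$ summand. The one point to fix is the normalizing constant, which you rightly flagged as needing verification: it is $2D_{\alpha_n}=C_{\alpha_n}^2\sum_\ell\frac{2\ell+1}{4\pi}\ell(\ell+1)\sim n^2$ (the gradient-component variance, as used in the paper's proof), not $C_{\alpha_n}^2\sum_\ell\ell(\ell+1)\sim 2\pi n$ as the displayed definition of $a$ literally suggests, and only the former cancels the $n^2$ carried by $(\Gamma_{\alpha_n}')^2$.
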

 
 \begin{proof}
From the definition of $a$, given in Section \ref{twopoint}, we have
\begin{eqnarray*}
a&=& -\frac{C_{\alpha_n}^4}{2D_{\alpha_n}} \left( \frac{1}{1-\left(C_{\alpha_n}^2 \sum \frac{2\ell+1}{4\pi} P_\ell\cos (\frac{\psi}{\alpha_nm})\right)^2}\right) 
\left( \sum_{\ell=\alpha_nn}^n \frac{2\ell+1}{4\pi}P_\ell^\prime\left(\cos \left(\frac{\psi}{\alpha_nm}\right)\right) \sin\left(\frac{\psi}{\alpha_nm}\right) \right)^2
\\&=&-\frac{1}{2D_{\alpha_n}} \left( \frac{1}{1-\left(\Gamma_{\alpha_n}(\cos \frac{\psi}{\alpha_nm})\right)^2}\right) 
\left(\Gamma^\prime_{\alpha_{n}}\left(\cos \frac{\psi}{\alpha_nm}\right)\right)^2\\
& =& -\frac{1}{2D_{\alpha_n}} \left(\Gamma_{\alpha_{n}}^\prime\left(\cos \left(\frac{\psi}{\alpha_nm}\right)\right) \right)^2 \left(1+\Gamma_{\alpha_{n}}^2\left(\cos \left(\frac{\psi}{\alpha_nm}\right)\right)+O\left(\Gamma_{\alpha_{n}}^4\left(\cos \left(\frac{\psi}{\alpha_nm}\right)\right)\right)\right).
\end{eqnarray*}
Using the expansion of the covariance function and its derivatives found in the previous section, we get
\begin{eqnarray*}
	a&=&\frac{-2n^2}{2D_{\alpha_n}\pi\psi}\bigg\{  \left(\frac{1}{2}-\frac{1}{2} \sin\left(h\psi+2\psi-\frac{\psi}{n}+O\left(\frac{\psi}{n^2}\right)\right)\right) 
\\&&+\frac{3g(n)}{\psi h } \sin\left(\frac{h\psi}{2}+\psi-\frac{\psi}{2n}+O\left(\frac{\psi}{n^2}\right)-\frac{\pi}{4}\right)\cos\left(\psi-\frac{\psi}{2n}+O\left(\frac{\psi}{n^2}\right)-\frac{5}{4}\pi\right) \\&&
+ O\left(\frac{g(n)}{\psi}\right)+O\left(\frac{1}{\psi n}\right)+O\left(\frac{1}{\psi^2}\right)+O\left(\frac{\sqrt{\psi}\psi}{n^4\sqrt{n}g(n)}\right)\bigg\}
\\&&\quad \times
\bigg\{1+\frac{2}{\pi\psi} \bigg[\frac{1}{2}+\frac{\sin\left(h\psi+2\psi-\frac{\psi}{n}+O\left(\frac{\psi}{n^2}\right)\right)}{2}+\frac{g(n)}{h\psi} \sin\left(\frac{h\psi}{2}+\psi-\frac{\psi}{2n}+O\left(\frac{\psi}{n^2}\right)+\frac{\pi}{4}\right)\\&&\cos\left(\psi-\frac{\psi}{2n}+O\left(\frac{\psi}{n^2}\right)-\frac{3}{4}\pi\right)+O\left(\frac{1}{\psi^2}\right)+O\left(\frac{1}{g(n)n\psi}+\frac{g(n)}{\psi}\right) \bigg\}.
\end{eqnarray*}

Multiplying and using the fact that $\sin^2(\alpha)= \frac{1-\cos(2\alpha)}{2}$ and that 
\begin{eqnarray*}
D_{\alpha_{n}}&=& C_{\alpha_n}^2 \sum_{\ell=\alpha_nn}^{n} \frac{\ell(\ell+1)}{2} \frac{2\ell+1}{4\pi}= \frac{4\pi}{2 n^2 g(n)} \sum_{k=0}^{\infty} \left(\frac{g(n)}{2}-\frac{1}{ng(n)}-\frac{1}{2n^2g(n)}\right)^k \frac{1}{8\pi} \frac{n^44g(n)}{2}\\
&&\left[1- \frac{6}{4} g(n)+g(n)^2-\frac{g(n)^3}{4} +\frac{1}{ng(n)}+\frac{6}{4g(n)n^2}+\frac{2}{4g(n)n^3}+\frac{g(n)}{4n^2}-\frac{2 }{4n^2} \right]\\
&=& \frac{n^2}{2}\left[1-g(n)-\frac{1}{g(n)^2n^2}+O\left(\frac{1}{n}+g(n)^2+\frac{1}{n^2g}\right)\right]\\&=& \frac{n^2}{2}\left[1+O(g(n))+O\left(\frac{1}{g(n)^2n^2}\right)+O\left(\frac{1}{n}\right)\right],
\end{eqnarray*}
 we get the thesis of the proposition.
\end{proof}

\begin{proposition} 	For $C < \psi < \alpha_n m\pi/2$, $C>0$,
	as $n\to \infty$,	
	\begin{eqnarray*}
b&=&  \sqrt{\frac{2}{\pi\psi}}\bigg[  \sin\left(\frac{h\psi}{2}+\psi-\frac{\psi}{2n}+O\left(\frac{\psi}{n^2}\right) +\frac{\pi}{4}\right)-\frac{5 g(n)}{2\psi h} \cos\left(\psi-\frac{\psi}{2n}+O\left(\frac{\psi}{n^2}\right) -\frac{7}{4}\pi\right)
	\\&&+\frac{1}{{\psi}}\sin\left(\frac{h\psi}{2}+\psi-\frac{\psi}{2n}+O\left(\frac{\psi}{n^2}\right) -\frac{\pi}{4}\right) -\frac{1}{\pi\psi} \sin\left(\frac{h\psi}{2}+\psi-\frac{\psi}{2n}+O\left(\frac{\psi}{n^2}\right) +\frac{\pi}{4}\right)+O\left(\frac{1}{\psi^2}\right)\\&&
	+\frac{1}{\pi \psi} \sin\left(h\psi+2\psi-\frac{\psi}{n}+O\left(\frac{\psi}{n^2}\right)\right) \sin\left(\frac{h\psi}{2}+\psi-\frac{\psi}{2n}+O\left(\frac{\psi}{n^2}\right) +\frac{\pi}{4}\right) 	\\&&+O\left(\frac{1}{ng(n)} +\frac{g(n)}{\psi}+\frac{\sqrt{\psi}}{ng(n)\sqrt{n}} \right)
+O\left(\frac{1}{ng(n)\psi}\right) +O(g(n)) \bigg].
	\end{eqnarray*}
\end{proposition}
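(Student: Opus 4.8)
The plan is to reduce $b$ to the covariance function $\Gamma_{\alpha_n}$ and its $\theta$-derivatives and then substitute the high-frequency asymptotics of Appendix \ref{asympcovsection}. Throughout I write $\Gamma$, $\Gamma'$, $\Gamma''$ for $\Gamma_{\alpha_n}(\cos(\psi/(\alpha_n m)))$ and its first two derivatives in $\theta$. Starting from the definitions in Section \ref{twopoint}, I would first observe that the ``pseudo-Hessian'' part of $\tilde b$ is exactly $-\Gamma''$: by the chain rule $\frac{d^2}{d\theta^2}P_\ell(\cos\theta)=P_\ell''(\cos\theta)\sin^2\theta-P_\ell'(\cos\theta)\cos\theta$, so that $C_{\alpha_n}^2\sum_\ell\frac{2\ell+1}{4\pi}\big(P_\ell'(\cos\theta)\cos\theta-P_\ell''(\cos\theta)\sin^2\theta\big)=-\Gamma''$. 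The remaining Schur-complement term in $\tilde b$ is $-(\Gamma')^2\Gamma/(1-\Gamma^2)$, so that
\[
\tilde b_{\alpha_n}(\psi/(\alpha_n m))=-\Gamma''-\frac{(\Gamma')^2\,\Gamma}{1-\Gamma^2}.
\]
Since $b=\tilde b/\big(2D_{\alpha_n}\big)$, where the normalizing factor $\tfrac{C_{\alpha_n}^2}{2}\sum_\ell\ell(\ell+1)=D_{\alpha_n}$ is the common gradient variance with $D_{\alpha_n}=\tfrac{n^2}{2}\big(1+O(g(n))+O(1/(g(n)^2n^2))+O(1/n)\big)$ as computed in the proof of the preceding proposition, the claim is reduced to estimating $-\Gamma''/(2D_{\alpha_n})-(\Gamma')^2\Gamma/\big((1-\Gamma^2)\,2D_{\alpha_n}\big)$.

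Next I would insert the three asymptotic expansions: Corollary \ref{gamma} for $\Gamma$, Corollary \ref{gamma-primo-quadro} for $(\Gamma')^2$, and the corollary following Lemma \ref{derivata-seconda} for $\Gamma''$. The dominant contribution is $-\Gamma''/(2D_{\alpha_n})$. Because that corollary gives $\Gamma''=\sqrt{2/(\pi\psi)}\,n^2\,(h/g(n))\{\cdots\}$ while $2D_{\alpha_n}=n^2(1+\cdots)$, the factor $n^2$ cancels; using $h/g(n)=1+O(g(n))+O(1/(ng(n)))$ this yields at once the leading term $\sqrt{2/(\pi\psi)}\sin\big(\tfrac{h\psi}{2}+\psi-\tfrac{\psi}{2n}+\tfrac\pi4\big)$ together with the corrections $-\tfrac{5g(n)}{2\psi h}\cos(\cdots-\tfrac74\pi)$ and $+\tfrac1\psi\sin(\cdots-\tfrac\pi4)$. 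For the subleading term I would expand $1/(1-\Gamma^2)=1+O(1/\psi)$, legitimate since $\Gamma=O(\psi^{-1/2})$, and then multiply the leading parts $(\Gamma')^2=\tfrac{2n^2}{\pi\psi}\big(\tfrac12-\tfrac12\sin(h\psi+2\psi-\cdots)+\cdots\big)$ and $\Gamma=\sqrt{2/(\pi\psi)}\sin(\cdots+\tfrac\pi4)$; after dividing by $2D_{\alpha_n}=n^2$ the powers of $n$ again cancel and one obtains precisely $-\tfrac{1}{\pi\psi}\sin(\cdots+\tfrac\pi4)+\tfrac1{\pi\psi}\sin(h\psi+2\psi-\cdots)\sin(\cdots+\tfrac\pi4)$, matching the two product terms in the statement.

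Finally I would assemble the error terms. The $O(g(n))$ and $O(1/(ng(n)))$ inside the bracket come from the expansions of $h/g(n)$ and of $2D_{\alpha_n}$ applied to the $O(1)$ leading oscillation; the $O(1/\psi^2)$ remainder is produced both by the $\Gamma^2$ term in $1/(1-\Gamma^2)$ and by the $\psi^{-2}$ tails of the covariance corollaries; and the genuinely $n$-dependent remainder $O\big(\sqrt{\psi}/(ng(n)\sqrt n)\big)$ is inherited, after division by $n^2$, from the non-oscillating error in the $\Gamma''$ corollary. I expect the single real obstacle to be the uniform bookkeeping in this last step: organizing the oscillatory terms by their order in $\psi$ (the surviving ones at $\psi^{-1/2}$ and $\psi^{-3/2}$), repeatedly applying product-to-sum identities to merge the products of sines and cosines, and checking that every spurious power of $n$ cancels against the $n^2$ in $2D_{\alpha_n}$, so that the final remainder is uniform on $C<\psi<\alpha_n m\pi/2$. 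No individual estimate is deep; the whole difficulty lies in controlling these cancellations and errors simultaneously.
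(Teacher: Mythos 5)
Your proposal is correct and follows essentially the same route as the paper: you reduce $b$ to $\frac{1}{2D_{\alpha_n}}\bigl[-\Gamma''-\Gamma(\Gamma')^2(1+\Gamma^2+O(\Gamma^4))\bigr]$ via the identity $\frac{d^2}{d\theta^2}P_\ell(\cos\theta)=P_\ell''(\cos\theta)\sin^2\theta-P_\ell'(\cos\theta)\cos\theta$, substitute the appendix asymptotics for $\Gamma$, $(\Gamma')^2$ and $\Gamma''$, cancel the $n^2$ against $2D_{\alpha_n}$ using $h/g(n)=1+O(g(n))+O(1/(ng(n)))$, and collect the oscillatory products and error terms — which is exactly the paper's argument.
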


\begin{proof}
	As we did in the previous proposition, starting from the definition of $b$, given in Section \ref{twopoint}, we have
\begin{eqnarray*}
b &=& \frac{C_{\alpha_n}^2}{2D_{\alpha_n}} \bigg[ \sum_{\ell=n\alpha_n}^{n} \frac{2\ell+1}{4\pi}P_\ell^\prime \left( \cos \left(\frac{\psi}{\alpha_nm}\right) \right)\left(\cos \left(\frac{\psi}{\alpha_nm}\right)\right)-\frac{2\ell+1}{4\pi}P_\ell^{''}\left(\cos \left(\frac{\psi}{\alpha_nm}\right) \right) \sin\left( \frac{\psi}{\alpha_nm}\right)^2\\&& - \frac{\left( C^2 \sum \frac{2\ell+1}{4\pi}P_\ell^\prime\left(\cos\left(\frac{\psi}{\alpha_nm}\right)\right) \sin\left(\frac{\psi}{\alpha_nm}\right)\right)^2}{1-\left(C^2 \sum_{\ell=\alpha_{n}n}^{n} \frac{2\ell+1}{4\pi}P_\ell\left(\cos \left(\frac{\psi}{\alpha_nm}\right)\right)\right)^2} \sum_{\ell=n\alpha_n}^{n} \frac{2\ell+1}{4\pi}P_\ell \left(\cos \left(\frac{\psi}{\alpha_nm}\right)\right)\bigg] 
\\&=&\frac{1}{2D_{\alpha_n}} \left[ -\Gamma_{\alpha_{n}}''\left(\cos \left(\frac{\psi}{\alpha_nm}\right)\right)-\frac{\Gamma_{\alpha_{n}}\left(\cos \left(\frac{\psi}{\alpha_nm}\right)\right)}{1-\Gamma_{\alpha_{n}}\left(\cos \left(\frac{\psi}{\alpha_nm}\right)\right)^2} \Gamma_{\alpha_{n}}^{\prime}\left(\cos\left(\frac{\psi}{\alpha_nm}\right)\right)^2 \right]\\&=&\frac{1}{2D_{\alpha_n}} \bigg[ -\Gamma_{\alpha_{n}}''\left(\cos \left(\frac{\psi}{\alpha_nm}\right)\right)-\Gamma_{\alpha_{n}}\left(\cos \left(\frac{\psi}{\alpha_nm}\right)\right) \left(\Gamma_{\alpha_{n}}^{\prime}\left(\cos\left(\frac{\psi}{\alpha_nm}\right)\right)\right)^2 \\&& \quad \times  \left(1+\Gamma_{\alpha_{n}}^2\left(\cos\left(\frac{\psi}{\alpha_nm}\right)\right)+O\left(\Gamma_{\alpha_{n}}^4\left(\cos\left(\frac{\psi}{\alpha_nm}\right)\right)\right)\right)\bigg].
\end{eqnarray*}

From corollary \ref{gamma-primo-quadro} and corollary \ref{gamma} we derive that

\begin{eqnarray*}
&&\Gamma_{\alpha_{n}}\left(\cos \frac{\psi}{\alpha_{n}m}\right) \Gamma_{\alpha_{n}}'^2\left(\cos \frac{\psi}{\alpha_{n}m}\right)= \frac{2n^2}{\pi\psi} \bigg\{ \frac{1}{2} -\frac{1}{2}\sin\left(h\psi+2\psi-\frac{\psi}{n}+O\left(\frac{\psi}{n^2}\right)  \right) \\ && \qquad+\frac{3}{\psi} \sin\left(\frac{h\psi}{2}+\psi-\frac{\psi}{2n}+O\left(\frac{\psi}{n^2}\right)-\frac{\pi}{4}\right) \cos\left(\psi-\frac{\psi}{2n}+O\left(\frac{\psi}{n^2}\right)-\frac{5}{4}\pi\right) +O\left(\frac{g}{\psi}+\frac{1}{\psi^2}+\frac{1}{n\psi}\right) \bigg\} \\&& \qquad 
\times \frac{\sqrt{2}}{\sqrt{\pi\psi}} \frac{h}{g(n)}\bigg\{  \sin\left(\frac{h\psi}{2}+\psi-\frac{\psi}{2n}+O\left(\frac{\psi}{n^2}\right)+\frac{\pi}{4}\right)+\frac{g(n)\cos\left(\psi-\frac{\psi}{2n}+O\left(\frac{\psi}{n^2}\right)-\frac{3}{4}\pi\right)}{2\psi h }\\&&\quad +O\left(\frac{1}{ng(n)}+g(n) \right) \bigg\}
\\&&= \frac{2n^2}{\pi\psi} \sqrt{\frac{2}{\pi\psi}} \frac{h}{g(n)} \bigg\{ \frac{1}{2} \sin\left(\frac{h\psi}{2}+\psi-\frac{\psi}{2n}+O\left(\frac{\psi}{n^2}\right)+\frac{\pi}{2} \right)+\frac{g(n)}{4 \psi h} \cos\left(\psi-\frac{\psi}{2n}+O\left(\frac{\psi}{n^2}\right) -\frac{3}{4}\pi\right) \\&& \qquad -\frac{1}{2} \sin\left(h\psi+2\psi-\frac{\psi}{n}+O\left(\frac{\psi}{n^2}\right)\right)\sin\left(\frac{h\psi}{2}+\psi-\frac{\psi}{2n}+O\left(\frac{\psi}{n^2}\right)+\frac{\pi}{4} \right) \\&& \qquad -\frac{g(n)}{4\psi h} \cos\left(\psi-\frac{\psi}{2n}+O\left(\frac{\psi}{n^2}\right) -\frac{3\pi}{4}\right)\sin\left(h\psi+2\psi-\frac{\psi}{n}+O\left(\frac{\psi}{n^2}\right) \right) \\&& \qquad +\frac{3g(n)}{\psi h} \sin\left(h\psi/2+\psi-\frac{\psi}{n}+O\left(\frac{\psi}{n^2}\right) -\frac{\pi}{4}\right) \sin \left (\frac{h\psi}{2}
+\psi-\frac{\psi}{n}+O\left(\frac{\psi}{n^2}\right)+\frac{\pi}{4} \right) \\&& \qquad \cos\left(\psi-\frac{\psi}{2n}+O\left(\frac{\psi}{n^2}\right)-\frac{5}{4}\pi \right)+O\left(\frac{1}{n\psi} +\frac{g(n)}{\psi^2}+\frac{g(n)^2}{\psi}+\frac{1}{\psi^2g(n)n}\right) \bigg\} .
\end{eqnarray*}

It follows that

\begin{eqnarray*}
b&=& \frac{1}{2D_{\alpha_n}} \sqrt{\frac{2}{\pi\psi}} n^2 \frac{h}{g(n)}\bigg[  \sin\left(\frac{h\psi}{2}+\psi-\frac{\psi}{2n}+O\left(\frac{\psi}{n^2}\right) +\frac{\pi}{4}\right)-\frac{5 g(n)}{2\psi h} \cos\left(\psi-\frac{\psi}{2n}+O\left(\frac{\psi}{n^2}\right) -\frac{7}{4}\pi\right)
\\&&+\frac{1}{{\psi}}\sin\left(\frac{h\psi}{2}+\psi-\frac{\psi}{2n}+O\left(\frac{\psi}{n^2}\right) -\frac{\pi}{4}\right) -\frac{1}{\pi\psi} \sin\left(\frac{h\psi}{2}+\psi-\frac{\psi}{2n}+O\left(\frac{\psi}{n^2}\right) +\frac{\pi}{4}\right)+O\left(\frac{1}{\psi^2}\right)\\&&
+\frac{1}{\pi \psi} \sin\left(h\psi+2\psi-\frac{\psi}{n}+O\left(\frac{\psi}{n^2}\right)\right) \sin\left(\frac{h\psi}{2}+\psi-\frac{\psi}{2n}+O\left(\frac{\psi}{n^2}\right) +\frac{\pi}{4}\right) +O\left(\frac{1}{ng(n)\psi} +\frac{g(n)}{\psi}\right)
\\&&+O\left(\frac{1}{ng(n)}\right) \bigg],
\end{eqnarray*}
from which we get the thesis of the proposition.
\end{proof}

\begin{corollary}
	For $C < \psi < \alpha_n m\pi/2$, $C>0$,
as $n\to \infty$,	

\begin{eqnarray*}
b^2&=& \frac{1}{\pi\psi} \bigg\{ 1+\sin\left(h\psi+ 2\psi-\frac{\psi}{2n} +O\left(\frac{\psi}{n^2}\right)\right) -\frac{5}{\psi}  \cos \left(\frac{h \psi}{2}+ \psi+\frac{\psi}{2n} +O\left(\frac{\psi}{n^2}\right)\right) \\&&
 -\frac{5}{\psi} \sin\left(\frac{h\psi}{2}\right)-\frac{2}{\psi} \cos\left(h\psi +2\psi +\frac{\psi}{2n} +O\left(\frac{\psi}{n^2}\right)\right) -\frac{1}{\pi \psi} -\frac{1}{\pi\psi} \cos\left(2h\psi+4\psi +\frac{\psi}{n} +O\left(\frac{\psi}{n^2}\right)\right)\\&&
+O\left(\frac{g(n)}{\psi}\right)+O\left(\frac{1}{\psi^2}+\frac{1}{ng(n)\psi}\right)\bigg\},\\
b^4 &=& \frac{1}{\pi^2 \psi^2} \bigg\{ \frac{3}{2} -\frac{\cos\left(2h\psi+4\psi +\frac{\psi}{n} +O\left(\frac{\psi}{n^2}\right)\right)}{2}+2\sin\left(h\psi+2\psi +\frac{\psi}{2n} +O\left(\frac{\psi}{n^2}\right)\right)+O\left(\frac{1}{\psi}\right)\bigg\} ,\\
a^2&=& \frac{1}{\pi^2 \psi^2} \bigg\{ \frac{3}{2} -\frac{\cos\left(2h\psi+4\psi +\frac{\psi}{n} +O\left(\frac{\psi}{n^2}\right)\right)}{2}-2\sin\left(h\psi+2\psi +\frac{\psi}{2n} +O\left(\frac{\psi}{n^2}\right)\right)+O\left(\frac{1}{\psi}\right)\bigg\} ,\\
a b^2 &=& -\frac{1}{\pi^2 \psi^2} \bigg\{ \frac{1}{2} +\frac{\cos\left(2h\psi+4\psi +\frac{\psi}{n} +O\left(\frac{\psi}{n^2}\right)\right)}{2}+O\left(\frac{1}{\psi}\right)\bigg\} ,
\end{eqnarray*}
and 
\begin{eqnarray*}
a \Gamma_{\alpha_{n}}^2\left(\cos \frac{\psi}{\alpha_{n}m}\right)\hspace{-3mm}&=&\hspace{-3mm}-\frac{1}{\pi^2 \psi^2} \bigg\{ \frac{1}{2} +\frac{\cos\left(2h\psi+4\psi +\frac{\psi}{n} +O\left(\frac{\psi}{n^2}\right)\right)}{2}+O\left(\frac{1}{\psi}\right)\bigg\} , \\
b^2 \Gamma_{\alpha_{n}}^2\left(\cos \frac{\psi}{\alpha_{n}m}\right)\hspace{-3mm}&=&\hspace{-3mm}\frac{1}{\pi^2 \psi^2} \bigg\{ \frac{3}{2} -\frac{\cos\left(2h\psi+4\psi +\frac{\psi}{n} +O\left(\frac{\psi}{n^2}\right)\right)}{2}+2\sin\left(h\psi+2\psi +\frac{\psi}{2n} +O\left(\frac{\psi}{n^2}\right)\right)+O\left(\frac{1}{\psi}\right)\bigg\} ,\\
\Gamma_{\alpha_{n}}^4\left(\cos \frac{\psi}{\alpha_{n}m}\right)\hspace{-3mm}&=&\hspace{-3mm}\frac{4}{\pi^2 \psi^2} \bigg\{ \frac{3}{8} -\frac{\cos\left(2h\psi+4\psi +\frac{\psi}{n} +O\left(\frac{\psi}{n^2}\right)\right)}{8}+\frac{1}{2}\sin\left(h\psi+2\psi +\frac{\psi}{2n} +O\left(\frac{\psi}{n^2}\right)\right)+O\left(\frac{1}{\psi}\right)\bigg\}. 
\end{eqnarray*}

\end{corollary}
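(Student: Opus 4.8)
The plan is to derive every identity by substituting the expansions of $a$, $b$ and $\Gamma_{\alpha_{n}}\!\left(\cos\frac{\psi}{\alpha_{n}m}\right)$ obtained in the two preceding propositions and in Corollary \ref{gamma-quadro} into the relevant products, and then collapsing all products of trigonometric factors by means of the elementary identities $\sin^{2}x=\tfrac12(1-\cos2x)$, $\cos^{2}x=\tfrac12(1+\cos2x)$ and $2\sin x\sin y=\cos(x-y)-\cos(x+y)$. To organize the bookkeeping I abbreviate $\Phi:=\frac{h\psi}{2}+\psi-\frac{\psi}{2n}+O\!\left(\frac{\psi}{n^{2}}\right)$, so that $2\Phi=h\psi+2\psi-\frac{\psi}{n}+O\!\left(\frac{\psi}{n^{2}}\right)$ and $4\Phi=2h\psi+4\psi-\frac{2\psi}{n}+O\!\left(\frac{\psi}{n^{2}}\right)$. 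The three inputs have leading terms
\[
a=-\frac{1}{\pi\psi}\bigl[1-\sin2\Phi\bigr]+O\!\left(\tfrac{1}{\psi^{2}}\right),\qquad
b=\sqrt{\tfrac{2}{\pi\psi}}\,\sin\!\left(\Phi+\tfrac{\pi}{4}\right)+O\!\left(\tfrac{1}{\psi^{3/2}}\right),\qquad
\Gamma_{\alpha_{n}}^{2}=\frac{1}{\pi\psi}\bigl[1+\sin2\Phi\bigr]+O\!\left(\tfrac{1}{\psi^{2}}\right),
\]
together with the remainders $O(g(n)/\psi)$ and $O(1/(ng(n)\psi))$ already present in those statements.

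First I would treat the six quantities $a^{2},b^{4},ab^{2},\Gamma_{\alpha_{n}}^{4},a\Gamma_{\alpha_{n}}^{2},b^{2}\Gamma_{\alpha_{n}}^{2}$, all of order $\psi^{-2}$. For these only the leading terms of the factors matter, since the neglected pieces contribute at order $\psi^{-3}$, i.e. $O(1/\psi)$ inside the braces. The structural observation that drives the whole computation is that $b^{2}$ and $\Gamma_{\alpha_{n}}^{2}$ share the \emph{same} leading term $\frac{1}{\pi\psi}\bigl[1+\sin2\Phi\bigr]$: indeed $\sin^{2}(\Phi+\tfrac{\pi}{4})=\tfrac12\bigl(1-\cos(2\Phi+\tfrac{\pi}{2})\bigr)=\tfrac12\bigl(1+\sin2\Phi\bigr)$. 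Consequently, to leading order, $a\Gamma_{\alpha_{n}}^{2}=ab^{2}$ and $b^{2}\Gamma_{\alpha_{n}}^{2}=\Gamma_{\alpha_{n}}^{4}=b^{4}$, so it suffices to expand the three squares
\[
a^{2}=\frac{(1-\sin2\Phi)^{2}}{\pi^{2}\psi^{2}},\qquad
\Gamma_{\alpha_{n}}^{4}=\frac{(1+\sin2\Phi)^{2}}{\pi^{2}\psi^{2}},\qquad
ab^{2}=-\frac{1-\sin^{2}2\Phi}{\pi^{2}\psi^{2}},
\]
and then substitute $\sin^{2}2\Phi=\tfrac12(1-\cos4\Phi)$; this reproduces the six stated formulas.

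The only genuinely finer computation, and the step I expect to be the main obstacle, is the expansion of $b^{2}$ itself, which must be carried to order $\psi^{-2}$ so as to produce the subleading coefficients $\tfrac{5}{\psi}$, $\tfrac{2}{\psi}$ and $\tfrac{1}{\pi\psi}$ appearing inside its braces. Here I would square the full expansion of $b$ from the preceding proposition, retaining its subleading summands $-\frac{5g(n)}{2\psi h}\cos(\cdots)$, $\frac{1}{\psi}\sin(\cdots)$, $\frac{1}{\pi\psi}\sin(\cdots)$ and the product term in $\sin(2\Phi)$, and collect the resulting cross products $2\sin(\Phi+\tfrac{\pi}{4})\cdot[\text{subleading}]$ through the product-to-sum identities; this is where the trigonometric arithmetic is heaviest and where the remainders $O(g(n)/\psi)$ and $O(1/(ng(n)\psi))$ must be tracked so that they absorb correctly into the final error. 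A final consistency check is to verify that the phase shifts $\pm\pi/4$ and $\pm\tfrac54\pi$ carried by the subleading terms of $b$ recombine, under the product-to-sum step, into the cosines and sines of the half-argument $\tfrac{h\psi}{2}+\psi+\tfrac{\psi}{2n}$ and the full argument $h\psi+2\psi+\tfrac{\psi}{2n}$ displayed in the statement.
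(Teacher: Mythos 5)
Your proposal is correct and follows the same route the paper implicitly takes: the corollary is stated without a written proof, and the intended argument is exactly your substitution of the leading terms of $a$, $b$ and $\Gamma_{\alpha_n}^2$ followed by the product-to-sum identities, with the key simplification being that $b^2$ and $\Gamma_{\alpha_n}^2$ share the leading term $\frac{1}{\pi\psi}\bigl(1+\sin 2\Phi\bigr)$ so that only three distinct squares need expanding. Your identification of the order-$\psi^{-2}$ refinement of $b^2$ as the one genuinely delicate step, and of the $O(g(n)/\psi)$ and $O(1/(ng(n)\psi))$ remainders as the quantities to track there, matches what the paper's stated error terms require.
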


\subsection{ Proof of Proposition \ref{small theta}}\label{sectionsmalltheta}
Proposition \ref{small theta} is a readaptation of Lemma 4.4 \cite{ST} (see also Lemma 3.4 and corollary 3.5 \cite{W09}) to our context. From the mentioned papers we have that 
$$K_{\alpha_{n}}(x,N) \ll \frac{D_{\alpha_{n}}}{\sqrt{1-(C_{\alpha_{n}}^2 \sum_{\ell=\alpha_{n} }^{n} \frac{2\ell+1}{4\pi}P_\ell(\cos \theta))^2}}.$$

Then 

$$\int_{0}^{C/(\alpha_{n}m)} \bigg|K_{\alpha_{n}}(\theta)-\frac{1}{4}\bigg| \sin \theta \, d\theta $$ is bounded by

$$\int_{0}^{C/(\alpha_{n}m)}  \frac{D_{\alpha_{n}}}{\sqrt{1-(C_{\alpha_{n}}^2 \sum_{ \ell=\alpha_{n}}^{n} \frac{2\ell+1}{4\pi}P_\ell(\cos \theta))^2}} \sin (\theta) \, d\theta.$$

From the Taylor approximation $P_\ell(\cos\theta)=1-\theta^2\frac{\ell(\ell+1)}{2}+\theta^4O(\ell^4)$, we conclude that
\begin{eqnarray*}
	&&1-P_\ell(\cos\theta)P_{\ell'}(\cos\theta)\\&&=1-\bigg(1-\theta^2 \frac{\ell(\ell+1)}{2}+O(\ell^4 \theta^4) -\theta^2 \frac{\ell'(\ell'+1)}{2}+O(\ell'^4 \theta^4)+ \theta^4 \frac{\ell'(\ell'+1)}{2}\frac{\ell(\ell+1)}{2}\bigg)\\&&=\theta^2 \frac{\ell(\ell+1)}{2} +\theta^2 \frac{\ell'(\ell'+1)}{2}+ \theta^4 \frac{\ell'(\ell'+1)}{2}\frac{\ell(\ell+1)}{2}+O(\ell^4 \theta^4)+O(\ell'^4 \theta^4),
\end{eqnarray*}
and thus,
\begin{eqnarray*}
1-\left(C_{\alpha_{n}}^2 \sum_{\ell=\alpha_{n} }^{n} \frac{2\ell+1}{4\pi}P_\ell(\cos \theta)\right)^2&=&C_{\alpha_{n}}^4 \sum_{\ell=\alpha_{n} }^{n} \sum_{\ell^\prime =\alpha_{n}}^{n}  \frac{2\ell+1}{4\pi}\frac{2\ell'+1}{4\pi}(1-P_\ell(\cos \theta) P_\ell'(\cos \theta)) 
\\&=& \theta^2 \bigg[2C_{\alpha_{n}}^2 \sum_{ \ell=\alpha_{n}}^{n} \frac{2\ell+1}{4\pi} \frac{\ell(\ell+1)}{2} \sum_{ \ell'=\alpha_{n}}^{n} C_{\alpha_{n}}^2 \frac{2\ell'+1}{4\pi} \\&&+\theta^2 C_{\alpha_{n}}^4 \sum_{\ell,\ell^\prime=\alpha_{n}}^{n} \frac{2\ell+1}{4\pi}\frac{2\ell'+1}{4\pi} \frac{\ell(\ell+1)}{4}\frac{\ell'(\ell'+1)}{4}
\\&&+O\left(\theta^2\ell^4C_{\alpha_{n}}^4 \sum \frac{2\ell+1}{4\pi}\frac{2\ell'+1}{4\pi} \right)\bigg]
\\&=& 2\theta^2D_{\alpha_n}\left[1+O(\theta^2 D_{\alpha_n})\right] .
\end{eqnarray*}

This implies that 
$$\int_{0}^{C/(\alpha_{n}m)} \frac{D_{\alpha_{n}}}{\sqrt{1-u^2}}\sin \theta \,d\theta= O\left(\frac{D_{\alpha_n}}{\sqrt{D_{\alpha_n}}} \int_{0}^{C/(\alpha_{n}m)} \frac{1}{\sqrt{1+O(D_{\alpha_n}\theta^2)}} \frac{\sin \theta}{\theta} \right)=O\left(1\right).$$

\end{document}